\newcommand{\mbf}{\mathbf}
\numberwithin{equation}{section}
\newtheorem{thm}{Theorem}[section]
\newtheorem{lem}[thm]{Lemma}
\newtheorem{rem}[thm]{Remark}
\newtheorem{ex}[thm]{Example}
 \journal{*}
\begin{document}
	
\begin{frontmatter}
\title{Novel structure-preserving schemes for stochastic Klein--Gordon--Schr\"odinger equations with additive noise}

\author[ad1,ad2]{Jialin Hong}
\ead{hjl@lsec.cc.ac.cn} 

\author[ad3]{Baohui Hou}
\ead{houbaohui@shu.edu.cn} 

\author[ad4]{Liying Sun\corref{cof1}}
\ead{liyingsun@lsec.cc.ac.cn} 

\author[ad1]{Xiaojing Zhang}
\ead{zxjing@lsec.cc.ac.cn} 

\address[ad1]{Institute of Computational Mathematics and Scientific/Engineering Computing, Academy of Mathematics and Systems Science, Chinese Academy of Sciences, Beijing 100190, China}
\address[ad2]{School of Mathematical Sciences, University of Chinese Academy of Sciences, Beijing 100049, China}
\address[ad3]{Department of Mathematics, Shanghai University, Shanghai 200444, P.R.China}
\cortext[cof1]{Corresponding author}
\address[ad4]{Capital Normal University, Beijing 100048, P.R.China}

\begin{abstract}
Stochastic Klein--Gordon--Schr\"odinger  (KGS) equations are important mathematical models and describe the interaction between scalar nucleons and neutral scalar mesons in the stochastic environment. 
In this paper, we propose novel structure-preserving schemes to numerically solve stochastic KGS  equations with additive noise, which preserve averaged charge evolution law, averaged energy evolution law, symplecticity, and multi-symplecticity. 
By applying central difference, sine pseudo-spectral method, or finite element method in space and modifying finite difference in time, we present some charge and energy preserved fully-discrete scheme for the original system.  
In addition, combining the symplectic Runge-Kutta method in time and finite difference in space, we propose a class of multi-symplectic discretizations preserving the geometric structure of the stochastic KGS equation. 
Finally, numerical experiments confirm theoretical findings.
\end{abstract}	

\begin{keyword}
	stochastic KGS equations \sep averaged charge evolution law  \sep averaged energy evolution law
	\sep symplecticity and multi-symplecticity  \sep structure-preserving scheme
\end{keyword}	
	
\end{frontmatter}

\section{Introduction}

The KGS equation
\begin{equation}\label{KGS0}
\begin{cases}
\boldsymbol{\mathrm{i} }d\varphi+\left(\varphi_{x x}+\sigma\varphi u \right)dt= 0, & (x,t) \in\mathcal O \times (0, T], \\
du_{t}-\left(u_{x x}-\mu^2u +\sigma|\varphi|^2\right)dt = 0, & (x,t) \in \mathcal O \times (0, T],\\
\varphi(0) = \varphi_0(x), u(0) = u_0(x), u_t(0) = v_0(x), & x \in \mathcal O,\\
\varphi(t) = 0, u(t) = 0,  & x \in \partial\mathcal O, t\in (0, T],
\end{cases}
\end{equation}
where $\varphi$ and $u$ represent a complex scalar nucleon field and a real meson field respectively, $\mu$ is mass of a meson and $\sigma$ is a coupling real number, 
was first proposed by Isamu Fukuda and Masayoshi in 1975.  
The equation has charge and energy conservation law and models the interaction of scalar nucleons interacting with neutral scalar mesons. 
Besides, the dynamics of these fields through Yukawa coupling has been extensively studied and applied in recent decades. 

Recently, the stochastic KGS system has been widely concerned, since random effects are needed to take into account when stochasticity occurs from disturbances in the Klein-Gordon equation (see e.g., \cite{Guo1, Gao, Lu} and reference therein), and external perturbation, boundary input, and medium changing (see e.g., \cite{Lv, Yan, ZhaoLi} and references therein). 
Similar to the deterministic case, the existence of local and global solutions of stochastic KGS systems can be obtained by a priori estimates in different energy spaces. 
The averaged charge and energy of the stochastic KGS equation, as important tools, are not invariant but possess the evolution law. 
In addition to the physical characteristics, the stochastic KGS equation possesses stochastic symplecticity and multi-symplecticity, which are geometric properties. 
Constructing numerical methods preserving intrinsic structures and characters of the original system is always an important topic. 
However, there has been no result on  structure-preserving schemes for stochastic KGS equations till now.

This paper aims to design structure-preserving numerical schemes for stochastic KGS equations with additive noise.  
Inspired by the simplicity and easy programmability of the central difference (see \cite{Cui, HongWang}),  flexibility to achieve high-order accuracy and deal with complex computational domains of the finite element method (see \cite{Cohen}),  good stability and rapid convergence accuracy in solving
smooth problems of the sine pseudo-spectral method (see \cite{Hesthaven,Thalhammer}), we employ these three classic numerical methods to approximate the stochastic KGS equation. 
The corresponding three semi-discrete schemes preserve both the symplectic and multi-symplectic geometric structure, as well as the averaged charge and energy evolution law. 
When constructing fully-discrete schemes preserving both the averaged charge and energy evolution law, the treatment of the time approximation is of vital importance. 
For example, the fully-discrete scheme based on the discrete gradient method in time and central difference in space,  inheriting the energy conservation law in the deterministic case, does not preserve averaged charge and energy evolution law of the stochastic KGS system.  
Moreover, if we make use of the fully-discrete scheme preserving both charge and energy conservation law simultaneously in the deterministic case directly, neither the averaged charge nor energy evolution law of the stochastic KGS system is preserved. 
To overcome the difficulty brought by nonlinear coefficients relying on the interaction between $\varphi$ and $u$ and the coupling effect between nonlinear coefficients and driving stochastic processes, we propose some novel  averaged charge  and energy preserved fully-discrete schemes by introducing some modified terms depending on the increment of Wiener processes. 
Since the energy-preserving method can not preserve symplecticity and multi-symplecticity in general, we also employ the symplectic Runge--Kutta method, especially the parametric symplectic Runge-Kutta methods, to present various symplectic semi-discretization in time. 
Combining with the finite difference in the spatial direction, we proposed fully-discrete schemes, which satisfy the stochastic multi-symplectic conservation law.   
Finally, numerical experiments are performed to verify the theoretical result of the stochastic KGS equation.

The paper is organized as follows. In Section 2, we introduce the intrinsic properties of the stochastic KGS equations with additive noise.
In Section 3, we present fully-discrete schemes preserving discrete averaged energy and charge evolution law based on the central difference,  sine pseudo-spectral method, or finite element method in space. 
In Section 4, a class of symplectic  Runge--Kutta methods and finite difference are utilized to propose fully-discrete schemes preserving multi-symplecticity. 
Finally, numerical experiments are carried out in Section 5.

Some notations to be used:
\vspace{-0.5em}
\begin{itemize}
\item $\phi_t$, $\phi_x$  the derivative of $\phi$ with respect to time and space respectively;
\vspace{-0.5em}
\item $\phi_{xx}$  the second derivative of $\phi$ with respect to  space ;
\vspace{-0.5em}
\item $|\phi|$ the module of complex-valued function $\phi$;
\vspace{-0.5em}
\item $L^p([a,b])$ the space consisting of $p$-square integrable complex-valued functions defined on $[a,b]$ with norm $\|\cdot\|_{L^p}$;
\vspace{-0.5em}
\item $H:=L^2([a,b])$ with innner product
$\langle \phi, \psi \rangle=\operatorname{Re}(\int_{a}^b \phi(x) \bar{\psi}(x) dx)$ for $\phi, \psi \in H$ and norm $\|\cdot\|:=\|\cdot\|_{L^2}$;
\vspace{-1.5em}
\item $H^m:=H^m([a,b])$ the usual Sobolev spaces with norm  $\|\cdot\|_m$ for $m>0$;
\vspace{-0.5em}
\item $H_0^1:=H_0^1([a,b])$ the usual Sobolev spaces with norm  $\|\cdot\|_1$ and with homogeneous Dirichlet boundary condition which means $H_0^1=\{\phi|\phi(a)=\phi(b)=0,\phi\in H^1\}$;
\vspace{-0.5em}
\item $\big(\Omega, \mathcal{F},\left\{\mathcal{F}_t\right\}_{t \geq 0}, \mathbb{P}\big)$ a filtered complete probability space;
\vspace{-0.5em}
\item $\mathbb E$ the expectation operator.
\vspace{-0.5em}
\end{itemize}

\section{Intrinsic properties of stochastic KGS equation} 
Consider stochastic KGS equation equipped with the Dirichlet boundary condition as follows
\begin{equation}\label{KGS}
\left\{
\begin{aligned}
&\boldsymbol{\mathrm{i} }d\varphi+ \left(\varphi_{x x}+\varphi u \right)dt= C_1dW(t), & (x,t) \in (a, b) \times (0, T], \\
&du_{t}-\left(u_{x x}-u +|\varphi|^2\right)dt = C_2d\widetilde{W}(t), & (x,t) \in (a, b) \times (0, T],\\
&\varphi(0,x) = \varphi_0(x), u(0,x) = u_0(x), u_t(0,x) = \mu_0(x), & x \in (a, b),
\end{aligned}
\right.
\end{equation}
where $\boldsymbol{\mathrm{i} }^2=-1$, $a, b, C_1,C_{2}\in\mathbb R,$ $u_0(x), \mu_0(x)$ are real-valued functions and $\varphi_0(x)$ is a complex-valued function. 
Here, $W$ and $\widetilde{W}$ are defined as
\begin{align*}
W(t,x)=W_0(t,x)+\boldsymbol{\mathrm{i}}W_1(t,x):=\eta_{1}(x)B_0(t) + \boldsymbol{\mathrm{i}}\eta_{1}(x)B_1(t),\quad \widetilde{W}(t,x)=\eta_{2}(x)B_2(t)
\end{align*}
where $B_0(t)$, $B_1(t)$ and $B_2(t)$ are independent standard Wiener processes  and $\eta_1(x), \eta_2(x)$ are sufficiently smooth real-valued functions. 
Set $v:=\frac 12 u_t$ and let $p$ and $q$ be the real and imaginary parts of $\varphi$, respectively. 
Then \eqref{KGS} has an equivalent formalization
\begin{equation}\label{SKGS}
\left\{
\begin{aligned}
&dq = \left(p_{x x} +u p\right)dt - C_1 dW_0(t), \\
&dp = -\left(q_{x x} + u q\right)dt +C_1dW_1(t), \\
&dv = \frac{1}{2}\left( u_{x x} -  u + p^2+q^2 \right)dt + \frac{1}{2}C_2 d\widetilde{W}(t), \\
&du = 2 v dt,
\end{aligned}
\right.
\end{equation}
where $p(0)=\operatorname{Re}(\varphi_0),$
$q(0)=\operatorname{Im}(\varphi_0),$ and $v(0)=\frac 12\mu_0.$ 
Assume that $\left(\varphi_0, u_0, \mu_0\right) \in \mathcal{E}_0:=H_0^1 \times H_0^1 \times H,$ and $\eta_1 \in H_0^1$, $\eta_2 \in H$, and then the stochastic KGS equation \eqref{KGS} has a unique solution in space $L^2(\Omega, C(0, T; \mathcal{E}_0))$.  
This result is obtained by a priori estimates for $(\varphi,u,u_t)$ in different energy spaces, and most discussions are similar to those in \cite{GuoLW}.  
If the size of the noises equals 0, i.e., the noise terms are eliminated, we get the
deterministic KGS equation. 
In this case, it possesses charge conservation law and energy conservation law, where
\begin{itemize}
	\item charge: $$\|\varphi\|^2= \|p\|^2 +\|q\|^2$$
	\item energy:
	$$\mathbf H(\varphi,u,v) =  2\langle u, |\varphi|^2\rangle - (\|u\|^2 + 4\|v\|^2 + \|u\|_{1}^2+2\|\varphi\|_1^2)$$
\end{itemize}
Different from the deterministic case of $C_1, C_2=0,$ the charge and energy of \eqref{KGS} are not conserved. 
Below we shall introduce the averaged charge evolution law.

\begin{lem}
Assume that $\left(\varphi_0, u_0, \mu_0\right) \in \mathcal{E}_0:=H_0^1 \times H_0^1 \times H,$ and $\eta_1 \in H_0^1$, $\eta_2 \in H$. 
Then the stochastic KGS equation \eqref{KGS} satisfies the following averaged charge evolution law. 
\begin{equation}
\mathbb{E}[\|\varphi(t)\|^2] = \mathbb{E}[\|\varphi_0\|^2] + 2C_1^2\| \eta_1\|^2t. 
\end{equation}
\end{lem}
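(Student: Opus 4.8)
The plan is to apply the infinite-dimensional It\^o formula to the functional $\varphi \mapsto \|\varphi(t)\|^2 = \|p(t)\|^2 + \|q(t)\|^2$, using the equivalent system \eqref{SKGS} for the real and imaginary parts $p$ and $q$. Since $W_0(t,x) = \eta_1(x)B_0(t)$ and $W_1(t,x) = \eta_1(x)B_1(t)$ are genuinely finite-dimensional (rank-one) noises driven by the independent scalar Brownian motions $B_0$ and $B_1$, the It\^o formula reduces to a classical computation once one tracks the drift, the martingale part, and the quadratic-variation correction.

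First I would write $d\|p\|^2 = 2\langle p, dp\rangle + d\langle\!\langle p\rangle\!\rangle$ and similarly for $q$. From \eqref{SKGS}, $dp = -(q_{xx} + uq)\,dt + C_1\,dW_1(t)$ and $dq = (p_{xx} + up)\,dt - C_1\,dW_0(t)$, so the drift contributions to $\tfrac{d}{dt}(\|p\|^2+\|q\|^2)$ are $2\langle p, -(q_{xx}+uq)\rangle + 2\langle q, p_{xx}+up\rangle$. The key cancellation is that $\langle p, q_{xx}\rangle = \langle p_{xx}, q\rangle$ (integration by parts, using the homogeneous Dirichlet boundary condition and $\eta_1 \in H_0^1$ so that the boundary terms vanish) and $\langle p, uq\rangle = \langle q, up\rangle$ since $u$ is real-valued; hence the entire drift term vanishes pointwise in $\omega$. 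Second, the martingale terms $2\langle p, C_1\,dW_1\rangle - 2\langle q, C_1\,dW_0\rangle = 2C_1\langle p,\eta_1\rangle\,dB_1 - 2C_1\langle q,\eta_1\rangle\,dB_0$ are mean-zero, so they drop out under $\mathbb{E}$. Third, the It\^o correction: the quadratic variation of $C_1\langle \cdot, dW_1\rangle$ acting on the noise alone contributes $C_1^2\|\eta_1\|^2\,dt$ from the $B_1$ channel and another $C_1^2\|\eta_1\|^2\,dt$ from the $B_0$ channel, for a total of $2C_1^2\|\eta_1\|^2\,dt$. Integrating from $0$ to $t$ and taking expectation yields exactly $\mathbb{E}[\|\varphi(t)\|^2] = \mathbb{E}[\|\varphi_0\|^2] + 2C_1^2\|\eta_1\|^2 t$.

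The main obstacle is not the algebra but the justification of the It\^o formula in this infinite-dimensional setting: one must check that the solution $(\varphi, u, u_t) \in L^2(\Omega, C(0,T;\mathcal{E}_0))$ has enough spatial regularity for $\langle p, q_{xx}\rangle$ and $\langle p, uq\rangle$ to make sense and for the integration-by-parts step to be legitimate, and that the local martingale $2C_1\langle p,\eta_1\rangle\,dB_1 - 2C_1\langle q,\eta_1\rangle\,dB_0$ is a true martingale (so that its expectation is zero). The first point is handled by noting that $\|\varphi\|^2$ only involves the $H$-norm, so one applies It\^o to the squared $H$-norm directly — a standard result (see e.g. the framework behind the well-posedness argument cited from \cite{GuoLW}) requiring only $\varphi \in L^2(\Omega; C(0,T;H_0^1))$, which holds; the coupling term $\langle p, uq\rangle$ is controlled since $u \in H_0^1 \hookrightarrow L^\infty$ in one space dimension and $p, q \in H_0^1 \subset L^2$. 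The martingale property follows from $\mathbb{E}\!\int_0^T \|\varphi(s)\|^2\|\eta_1\|^2\,ds < \infty$, again a consequence of the a priori bound. A standard localization-and-pass-to-the-limit argument disposes of any remaining technicality, and the stated evolution law follows.
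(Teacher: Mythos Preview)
Your proposal is correct and follows essentially the same approach as the paper: apply It\^o's formula to $\|p\|^2$ and $\|q\|^2$, use integration by parts together with the symmetry $\langle p,uq\rangle=\langle q,up\rangle$ to cancel the drift, and take expectation so that the martingale terms vanish, leaving the $2C_1^2\|\eta_1\|^2 t$ correction. If anything, you are more careful than the paper, which simply states ``by It\^o's formula'' and ``taking the expectation and using the integration by parts'' without discussing the regularity or true-martingale issues you raise.
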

\begin{proof}
According to the It\^o's formula, we have 
\begin{align*}
\|p(t)\|^2= &\|p(0)\|^2  - \int_{0}^t \langle 2p(s),  q_{x x}(s) + u(s) q(s)\rangle ds
+\int_{0}^t \langle 2p(s), C_1dW_1(s)\rangle+ C_1^2\| \eta_1\|^2t, \\ 
\|q(t)\|^2= &\|q(0)\|^2  + \int_{0}^t \langle 2q(s),  p_{x x}(s) + u(s) p(s)\rangle ds
-\int_{0}^t \langle 2q(s), C_1dW_0(s)\rangle+ C_1^2\| \eta_1\|^2t.     
\end{align*}
Taking the expectation and using the integration by parts yield the result.
\end{proof}
It is obvious that the averaged charge increases linearly with respect to $t.$ 
The following lemma presents the averaged energy evolution law of stochastic KGS equation \eqref{KGS}.

\begin{lem}
Assume that $\left(\varphi_0, u_0, \mu_0\right) \in \mathcal{E}_0:=H_0^1 \times H_0^1 \times H,$ and $\eta_1 \in H_0^1$, $\eta_2 \in H$. 
The averaged energy evolution law of stochastic KGS equation \eqref{KGS} meets
\begin{equation}
\mathbb{E}[\mathbf H(\varphi(t),u(t),v(t))] = \mathbb{E}[\mathbf H(\varphi_0,u_0,\mu_0)] -  C_2^2 \| \eta_2\|^2t-  4C_1^2\|\eta_1\|_1^2t+ 4C_1^2\mathbb{E}\Big[ \int_{0}^t \langle u(s), \eta_1^2\rangle ds\Big].
\end{equation}
\end{lem}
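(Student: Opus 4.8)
The plan is to apply It\^o's formula to $t\mapsto\mathbf H(\varphi(t),u(t),v(t))$ after rewriting the energy in the real variables $(p,q,u,v)$ of \eqref{SKGS},
\[
\mathbf H=2\langle u,\,p^2+q^2\rangle-\|u\|^2-4\|v\|^2-\|u\|_1^2-2\|p\|_1^2-2\|q\|_1^2 ,
\]
and then to split $d\mathbf H$ into three groups: the contribution of the drift vector field of \eqref{SKGS}, the It\^o (second-order) corrections, and the stochastic integrals. Since the noise in \eqref{SKGS} is additive, the drift of the system coincides with the deterministic KGS vector field, a fact I will exploit below.

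First I would differentiate each summand of $\mathbf H$. As $du=2v\,dt$ has no martingale part, the summands $\|u\|^2$ and $\|u\|_1^2$ produce only drift and no It\^o correction, and $2\langle u,p^2+q^2\rangle$ carries no $u$--$\varphi$ cross-variation. Using $(dW_0)^2=(dW_1)^2=\eta_1^2\,dt$, $(d\widetilde{W})^2=\eta_2^2\,dt$ and the independence of $B_0,B_1,B_2$ (whence the remaining cross-variations vanish, so that $(dp)^2=(dq)^2=C_1^2\eta_1^2\,dt$ and $(dv)^2=\tfrac14C_2^2\eta_2^2\,dt$), the It\^o corrections are $4C_1^2\langle u,\eta_1^2\rangle\,dt$ from $2\langle u,p^2+q^2\rangle$, $-C_2^2\|\eta_2\|^2\,dt$ from $-4\|v\|^2$, and $-4C_1^2\|\eta_1\|_1^2\,dt$ from $-2\|p\|_1^2-2\|q\|_1^2$. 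Everything else among the first-order terms assembles into the deterministic-drift contribution together with stochastic integrals against $dW_0,dW_1,d\widetilde{W}$.

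It then remains to observe that the deterministic-drift part is precisely $d\mathbf H$ evaluated along the deterministic KGS flow, hence vanishes: this is the deterministic energy conservation recalled above, verified by integration by parts and the homogeneous Dirichlet boundary conditions (with $v=\tfrac12u_t$ inheriting the zero boundary values of $u$ for $t>0$). Integrating on $[0,t]$ and taking expectations then removes the stochastic integrals --- they are martingales once one invokes the a priori bound $(\varphi,u,v)\in L^2(\Omega,C(0,T;\mathcal{E}_0))$ and the $1$-$D$ embedding $H^1\hookrightarrow L^\infty$ to bound their integrands (with a routine stopping-time localization if one insists on full rigor) --- and leaves exactly the three extra drift terms, that is, the stated evolution law. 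The only genuinely laborious step is the bookkeeping inside the deterministic-drift cancellation: each drift term must be paired correctly through integration by parts, and one must check that no second-order term has been omitted or double-counted; the martingale argument is routine given the well-posedness.
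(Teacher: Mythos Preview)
Your proposal is correct and follows the same underlying route as the paper --- apply It\^o's formula to the energy functional and take expectations --- but your organization is cleaner than the paper's. The paper computes It\^o's formula separately for each building block $\|u\|^2$, $\|v\|^2$, $\|u\|_1^2$, $\|\varphi\|_1^2$, $\langle u,|\varphi|^2\rangle$, writes out every drift integral explicitly, and then combines the resulting identities and watches the drift terms cancel through integration by parts. You instead observe up front that, because the noise is additive, the first-order drift contribution to $d\mathbf H$ is exactly the Lie derivative of $\mathbf H$ along the deterministic KGS flow, which vanishes by the known energy conservation; this lets you skip the explicit bookkeeping and focus only on the second-order (It\^o) corrections, which you identify correctly. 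You are also more careful than the paper about the martingale part: the paper simply writes ``taking expectation completes the proof'' without justifying integrability, whereas you invoke the $L^2(\Omega;C(0,T;\mathcal E_0))$ bound and the embedding $H^1\hookrightarrow L^\infty$ (with localization) to make this rigorous.
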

\begin{proof}
Notice that $\mathbf H(\varphi(t),u(t),v(t)) =  2\langle u(t), |\varphi(t)|^2\rangle - (\|u(t)\|^2 + 4\|v(t)\|^2 + \|u(t)\|_{1}^2+2\|\varphi\|_1^2).$ 
By the It\^o's formula, we deduce
\begin{align*}
\|u(t)\|_1^2=& \|u_0\|_1^2 + 4\int_0^t\langle \nabla u(s), \nabla v(s)\rangle ds,\\
\|u(t)\|^2=& \|u_0\|^2  + \int_{0}^t \langle 2u(s),  2v(s)\rangle ds, \\
 \|v(t)\|^2= &\|\mu_0\|^2  + \int_{0}^t \langle v(s),  u_{x x}(s) \rangle ds
 + \int_{0}^t \langle v(s),  -u(s) +|\varphi(s)|^2\rangle ds\\
&+\int_{0}^t \langle v(s), C_2dW_2(s)\rangle
+\frac{1}{4}\int_{0}^t\langle C_2\eta_2, C_2\eta_2\rangle ds.
 \end{align*}
Combining the above three equations, we obtain 
\begin{align*}
&\|u(t)\|^2 + 4\|v(t)\|^2 + \|u(t)\|_1^2 -\|u_0\|^2 - 4\|\mu_0\|^2 - \|u_0\|_1^2\\
=&4\int_{0}^t \langle v(s),  |\varphi(s)|^2\rangle ds
  + 4C_2\int_{0}^t \langle v(s), dW_2(s)\rangle
 +C_2^2\|\eta_2\|^2t.
\end{align*}
For the terms $\|\varphi(t)\|_1^2$ and $\langle u(t), |\varphi(t)|^2\rangle$, a straight calculation yields
\begin{align*}
&\|p(t)\|_1^2+ \|q(t)\|_1^2- \|p(0)\|_1^2-\|p(0)\|_1^2\\
=&-\int_{0}^t \langle 2p_x(s),  q_{xxx}(s)+u_x(s)q(s)+u(s)q_x(s) \rangle ds
+\int_{0}^t \langle 2p_x(s), C_1d(W_1)_x(s)\rangle \\
&+\int_{0}^t \langle 2q_x(s),  p_{xxx}(s)+u_x(s)p(s)+u(s)p_x(s) \rangle ds
-\int_{0}^t \langle 2q_x(s), C_1d(W_0)_x(s)\rangle \\
&+\int_{0}^t \langle C_1(\eta_1)_x, C_1(\eta_1)_x\rangle ds
+\int_{0}^t \langle C_1(\eta_1)_x, C_1(\eta_1)_x\rangle ds\\
=&2\int_0^t\langle \nabla \varphi(s), \boldsymbol{\mathrm{i}}\nabla u(s)\varphi(s) \rangle ds -2C_1\int_0^t\langle \nabla \varphi(s), \boldsymbol{\mathrm{i}}\nabla dW(s) \rangle+2C_1^2\|\eta_1\|_1^2t,
\end{align*}
 and
\begin{align*}
&\langle u(t), |\varphi(t)|^2\rangle- \langle u_0, |\varphi_0|^2\rangle
\\=&\int_{0}^t \langle 2v(s), |\varphi(s)|^2\rangle ds-\int_{0}^t \langle u(s), 2p(s)(q_{xx}(s)+u(s)q(s))\rangle ds
+ \int_{0}^t \langle u(s), 2p(s)C_1dW_1(s)\rangle\\
&+\int_{0}^t \langle u(s), 2q(s)(p_{xx}(s)+u(s)p(s)\rangle ds
-\int_{0}^t \langle u(s), 2q(s)C_1dW_0(s)\rangle +2\int_{0}^t \langle u(s), C_1^2\eta_1^2\rangle ds\\
=&\int_{0}^t \langle 2v(s), |\varphi(s)|^2\rangle ds+2\int_0^t\langle \nabla \varphi(s), \boldsymbol{\mathrm{i}}\nabla u(s)\varphi(s) \rangle ds+2C_1\int_{0}^t \langle u(s), p(s)dW_1(s)\rangle\\
&-2C_1\int_{0}^t \langle u(s), q(s)dW_0(s)\rangle +2C_1^2\int_{0}^t \langle u(s), \eta_1^2\rangle ds.
\end{align*}
Combining the above equations leads to
\begin{align*}
&2\langle u(t), |\varphi(t)|^2\rangle - (\|u(t)\|^2 + 4\|v(t)\|^2 + \|u(t)\|_{1}^2+2\|\varphi(t)\|_1^2)\\
=&2\langle u_0, |\varphi_0|^2\rangle - (\|u_0\|^2 + 4\|\mu_0\|^2 + \|u_0\|_{1}^2+2\|\varphi_0\|_1^2)-4C_1^2\|\eta_1\|_1^2t-C_2^2\|\eta_2\|^2t\\
 &+4C_1^2\int_{0}^t \langle u(s), \eta_1^2\rangle ds- 4\int_{0}^t \langle v(s), C_2dW_2(s)\rangle
+4C_1\int_0^t\langle \nabla \varphi(s), \boldsymbol{\mathrm{i}}\nabla dW(s) \rangle\\
&+4C_1\int_{0}^t \langle u(s), p(s)dW_1(s)\rangle 
-4C_1\int_{0}^t \langle u(s), q(s)dW_0(s)\rangle.
\end{align*}
Taking expectation completes the proof.
\end{proof}

The stochastic KGS equation can also be rewritten as an infinite-dimensional stochastic Hamiltonian system. 
In detail, denoting 
\begin{align*}
&\mathbb H(p,q,u,v) =\frac 12\langle u, |\varphi|^2\rangle - \frac 14(\|u\|^2 + 4\|v\|^2 + \|u\|_{1}^2+2\|p\|_1^2+2\|q\|_1^2),\\
&\mathbb H_0(p,q,u,v):=-C_1\int_a^bp  dx,\quad \mathbb H_1(p,q,u,v):=-C_1\int_a^bq dx,\quad \mathbb H_2(p,q,u,v):=\frac{C_2}{2}\int_a^bu dx,
\end{align*}
we have 
\begin{equation}
\label{idshs}
\left\{
\begin{aligned}
& dp=-\frac{\delta \mathbb H}{\delta q}dt-\frac{\delta \mathbb H_1}{\delta q}dW_1(t),\quad &p(0)=\operatorname{Re}(\varphi_0),\\
& du=-\frac{\delta \mathbb H}{\delta v}dt,\quad&u(0)=u_0,\\
& dq=\frac{\delta \mathbb H}{\delta p}dt+\frac{\delta \mathbb H_0}{\delta p}dW_0(t),\quad&q(0)=\operatorname{Im}(\varphi_0),\\
& dv=\frac{\delta \mathbb H}{\delta u}dt+\frac{\delta \mathbb H_2}{\delta u}d\widetilde{W}(t),\quad &v(0)=\frac 12\mu_0.
\end{aligned}
\right.
\end{equation}
One of the inherent canonical properties
of the infinite-dimensional stochastic Hamiltonian system is the infinite-dimensional symplecticity of its flow. 
For \eqref{KGS} or \eqref{idshs}, the associated  symplectic form is given by
$$ \bar\omega (t) = \int_a^b (\mathrm{d} q(t) \wedge \mathrm{d} p(t) + \mathrm{d} v(t) \wedge \mathrm{d} u(t)) d x,$$
where the overbar on $\omega$ is a reminder that differential two-forms $\mathrm{~d} q \wedge \mathrm{d} p$ and $ \mathrm{d} v \wedge \mathrm{d} u$ are integrated over the
space, and ${\rm d}$ denotes the differential with respect to the initial value.

\begin{lem}
	Assume $\left(\varphi_0, u_0, \mu_0\right) \in \mathcal{E}_0:=H_0^1 \times H_0^1 \times H,$ $\eta_1 \in H_0^1$, $\eta_2 \in H,$ and that the solution of stochastic KGS equation \eqref{KGS} is differentiable with respect to the initial data. 
	Then \eqref{KGS} satisfies the infinite-dimensional stochastic symplectic structure, i.e.,
	\begin{equation*}
	\bar\omega (t) = \bar\omega (0) :=\int_a^b (\mathrm{d} q(0) \wedge \mathrm{d} p(0) + \mathrm{d} \mu_0 \wedge \mathrm{d} u_0) d x.
	\end{equation*}
\end{lem}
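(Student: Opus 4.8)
The plan is to differentiate the Hamiltonian formulation \eqref{idshs} with respect to the initial datum $(\varphi_0,u_0,\mu_0)$ and then apply the It\^o formula to the two-form $\bar\omega(t)$. The decisive observation is that the three diffusion coefficients occurring in \eqref{idshs}, namely $\delta\mathbb H_1/\delta q=-C_1$, $\delta\mathbb H_0/\delta p=-C_1$ and $\delta\mathbb H_2/\delta u=C_2/2$, are \emph{constants}; this is precisely the additive-noise structure. Hence, letting $\mathrm d$ denote the differential with respect to the initial value and writing $z=(p,q,u,v)^{\top}$, every stochastic term disappears upon differentiation and the variations $(\mathrm dp,\mathrm dq,\mathrm du,\mathrm dv)$ solve the \emph{pathwise} (noise-free) linear equation
\begin{equation*}
d(\mathrm dz)=\mathbb J^{-1}(\mathrm{Hess}\,\mathbb H)(z)\,\mathrm dz\,dt ,
\end{equation*}
where $\mathbb J$ is the constant skew-symmetric structure operator associated with \eqref{idshs} and $(\mathrm{Hess}\,\mathbb H)(z)$ is the Hessian of $\mathbb H$. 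The latter is a symmetric family of operators: the diagonal blocks $\delta^2\mathbb H/\delta p^2$, $\delta^2\mathbb H/\delta q^2$ and $\delta^2\mathbb H/\delta u^2$ consist of a multiplication operator together with the factor $\partial_{xx}$ and are therefore self-adjoint under the homogeneous Dirichlet boundary condition, $\delta^2\mathbb H/\delta v^2$ is a negative constant, and the only non-zero off-diagonal blocks, $\delta^2\mathbb H/\delta p\,\delta u=\delta^2\mathbb H/\delta u\,\delta p$ and $\delta^2\mathbb H/\delta q\,\delta u=\delta^2\mathbb H/\delta u\,\delta q$, are, respectively, multiplication by $p$ and multiplication by $q$ (hence symmetric).

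Since $d(\mathrm dz)$ has no stochastic integral part, the It\^o formula applied to $\bar\omega(t)=\int_a^b(\mathrm dq(t)\wedge\mathrm dp(t)+\mathrm dv(t)\wedge\mathrm du(t))\,dx$ carries no quadratic-variation correction and collapses to the ordinary Leibniz rule,
\begin{equation*}
d\bar\omega(t)=\int_a^b\big(d(\mathrm dq)\wedge\mathrm dp+\mathrm dq\wedge d(\mathrm dp)+d(\mathrm dv)\wedge\mathrm du+\mathrm dv\wedge d(\mathrm du)\big)dx .
\end{equation*}
I would then substitute the variational equations. Every contribution of a diagonal Hessian block is either of the form $(\,\cdot\,)\,\mathrm da\wedge\mathrm da=0$ or, after one integration by parts in $x$ in which the boundary term vanishes because $\mathrm dp,\mathrm dq,\mathrm du$ inherit the homogeneous Dirichlet condition, of the form $-\int_a^b\partial_x\mathrm da\wedge\partial_x\mathrm da\,dx=0$; the contributions of the off-diagonal blocks appear in canceling pairs such as $p\,(\mathrm du\wedge\mathrm dp+\mathrm dp\wedge\mathrm du)=0$ and $q\,(\mathrm du\wedge\mathrm dq+\mathrm dq\wedge\mathrm du)=0$, by the symmetry of the second variation together with the skew-symmetry of $\wedge$. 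Abstractly this is just the classical identity that $\int_a^b(\mathrm dz)^{\top}\wedge\big((\mathrm{Hess}\,\mathbb H)(z)\,\mathrm dz\big)dx=0$ because $\mathrm{Hess}\,\mathbb H$ is self-adjoint while $\wedge$ is alternating. Therefore $d\bar\omega(t)=0$, that is, $\bar\omega(t)=\bar\omega(0)$ almost surely.

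The algebra above is the familiar deterministic symplecticity computation, and it is not the main difficulty; the real work is the analytic justification in the infinite-dimensional stochastic setting. One has to know that the solution flow of \eqref{KGS} is regular enough in the initial data that $\mathrm d$ commutes with $\partial_t$, with $\partial_x$, and with the time and stochastic integrals, so that both the variational equation and the differentiation under $\int_a^b(\,\cdot\,)\,dx$ are legitimate, and that the $H_0^1$-regularity of the solution and of $\eta_1$ makes all boundary terms in the integrations by parts vanish. For a fully rigorous treatment one would carry out the above computation on a Galerkin (finite element or spectral) truncation, where it becomes a finite-dimensional stochastic symplecticity statement, and then pass to the limit. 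As differentiability with respect to the initial data is taken as a hypothesis of the lemma, the proof reduces to the formal computation sketched here.
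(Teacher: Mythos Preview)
Your argument is correct and is exactly the standard route one would expect: differentiate the Hamiltonian system \eqref{idshs} with respect to the initial data, observe that the diffusion coefficients $-C_1,\ -C_1,\ C_2/2$ are constants (the additive-noise structure) so that the variational processes $(\mathrm dp,\mathrm dq,\mathrm du,\mathrm dv)$ satisfy a random but pathwise-continuous linear ODE with no martingale part, and then verify $d\bar\omega(t)=0$ by the self-adjointness of $\mathrm{Hess}\,\mathbb H$ together with the antisymmetry of $\wedge$. Your bookkeeping is accurate: the $\partial_{xx}$ contributions vanish after one integration by parts in $x$ (the boundary terms drop thanks to the homogeneous Dirichlet condition inherited by the variations), the multiplication terms $u\,\mathrm dp\wedge\mathrm dp$, $u\,\mathrm dq\wedge\mathrm dq$, $\mathrm du\wedge\mathrm du$, $\mathrm dv\wedge\mathrm dv$ vanish by antisymmetry, and the cross terms $p\,\mathrm du\wedge\mathrm dp+p\,\mathrm dp\wedge\mathrm du$ and $q\,\mathrm dq\wedge\mathrm du-q\,\mathrm dq\wedge\mathrm du$ cancel in pairs.

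As for the comparison you were asked to carry out: the paper does \emph{not} give a proof of this lemma; it states the result and moves on. So there is nothing to compare against, and your write-up in fact supplies what the paper omits. Your closing remarks about the analytic justification (commuting $\mathrm d$ with $\partial_t$, $\partial_x$ and the stochastic integral, or alternatively arguing via a Galerkin truncation) are appropriate caveats, and the hypothesis of differentiability in the initial data built into the lemma is precisely what licenses the formal computation.
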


The lemma implies that
the spatial integral of the oriented areas of projections onto the coordinate planes is an integral invariant. 
As shown above, \eqref{KGS} is regarded as a stochastic evolution equation in time. When the spatial
variable is also of interest, both the stochastic multi-symplectic Hamiltonian system and stochastic
multi-symplectic structure are involved.

\begin{lem}
	The stochastic KGS equation \eqref{KGS} satisfies the stochastic multi-symplectic conservation law.
\end{lem}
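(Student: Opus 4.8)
The plan is to recast \eqref{KGS} as a stochastic multi-symplectic Hamiltonian partial differential equation and then read off the conservation law from its variational equation. First I would lower the spatial order of \eqref{SKGS} by introducing the auxiliary fields $v_1:=p_x$, $v_2:=q_x$, $w:=u_x$ (retaining $v=\tfrac12 u_t$ as in \eqref{SKGS}) and assembling the seven-component state $z:=(p,q,u,v,v_1,v_2,w)^{\top}$. In these variables \eqref{SKGS} is equivalent to a first-order system of the form
\begin{equation*}
M\,dz + K z_x\,dt = \nabla_z S(z)\,dt + F_0\,dW_0(t) + F_1\,dW_1(t) + F_2\,d\widetilde{W}(t),
\end{equation*}
where $M,K\in\mathbb{R}^{7\times 7}$ are constant skew-symmetric matrices, $S\colon\mathbb{R}^7\to\mathbb{R}$ is a smooth Hamiltonian density encoding the couplings $up$, $uq$, $p^2+q^2$, the linear term $u$, and the defining relations $v_1=p_x$, $v_2=q_x$, $w=u_x$, and $F_0,F_1,F_2$ are vectors built from $C_1\eta_1$ and $C_2\eta_2$; the crucial point is that, the noise in \eqref{KGS} being additive, the entries of $F_0,F_1,F_2$ do not depend on $z$.

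Next I would define the pre-symplectic two-forms $\omega:=\tfrac12\,\mathrm{d}z\wedge M\,\mathrm{d}z$ and $\kappa:=\tfrac12\,\mathrm{d}z\wedge K\,\mathrm{d}z$, where $\mathrm{d}$ is the differential with respect to the initial data, and differentiate the reformulated system with respect to the initial data. Since $\mathrm{d}F_0=\mathrm{d}F_1=\mathrm{d}F_2=0$, every stochastic term disappears, and the variational equation reduces to
\begin{equation*}
M\,d(\mathrm{d}z) + K\,\partial_x(\mathrm{d}z)\,dt = D^2 S(z)\,\mathrm{d}z\,dt .
\end{equation*}
Taking the wedge product of this identity with $\mathrm{d}z$ and using the skew-symmetry of $M$ and $K$ together with the symmetry of the Hessian $D^2 S(z)$ (so that $\mathrm{d}z\wedge D^2 S(z)\,\mathrm{d}z=0$), I would arrive at the local stochastic multi-symplectic conservation law
\begin{equation*}
d\omega + \partial_x\kappa\,dt = 0,\qquad (x,t)\in(a,b)\times(0,T],
\end{equation*}
which is exactly the claim of the lemma; integrating in $x$ over $(a,b)$ and invoking the homogeneous Dirichlet boundary data yields the equivalent integrated form.

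The main obstacle is the first step: selecting the augmented variables and a single skew-symmetric pair $(M,K)$ and scalar $S$ that simultaneously accommodate the complex, first-order-in-time Schr\"odinger part and the real, second-order-in-time Klein--Gordon part of \eqref{KGS}, and then checking that the additive noise sits only in $z$-independent slots of the system. That last verification is precisely what forces the stochastic integrals out of the variational equation, leaving a conservation law with no It\^o term; once it is in place the wedge-product manipulation is routine. As in the preceding lemma on symplecticity, differentiability of the solution with respect to the initial data is assumed throughout.
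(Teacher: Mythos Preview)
Your proposal is correct and follows essentially the same route as the paper: introduce first-order auxiliary variables for $p_x$, $q_x$, $u_x$ and the time-derivative of $u$, pack everything into a seven-component state, and rewrite \eqref{KGS} in the skew-symmetric form $M\,dz+Kz_x\,dt=\nabla S(z)\,dt+(\text{additive noise})$ so that the conservation law $d\omega+\partial_x\kappa\,dt=0$ drops out of the variational equation. The only cosmetic differences are that the paper uses $r=u_t$ rather than $v=\tfrac12 u_t$, orders the state as $(p,q,f,g,u,r,w)$, and writes the noise coefficients as gradients of linear functionals $S_0,S_1,S_2$ (hence constant vectors, exactly your $F_i$), while also displaying the matrices $M,K$ and the Hamiltonian $S$ explicitly rather than describing them.
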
 
\begin{proof}
	
	Let $\varphi(t) = p(t) + \boldsymbol{\mathrm{i} } q(t), \varphi_x(t) = f(t) +\boldsymbol{\mathrm{i} }g(t)$ and set $r := u_t, w: = u_x$.
	Then the stochastic KGS equation  \eqref{KGS} can be reformulated as
	\begin{equation}\label{multi-symplectic}
	K d\mbf z+ L\mbf z_{x} d t =  \nabla S({\bf z})  d t + \nabla S_0({\bf z})dW_0(t)+ \nabla S_1({\bf z})dW_1(t)+ \nabla S_2({\bf z})d\widetilde{W}(t),
	\end{equation}
	where $\mbf z = (p, q, f, g, u, r, w)^\top$
	and 
	\begin{align*}
	&S({\bf z})= -\frac{1}{2}u(p^2+q^2)- \frac{1}{2}(f^2+g^2) + \frac{1}{4}(u^2+r^2-w^2),\quad \\
	&S_0({\bf z}) = C_1p,\quad ~S_1({\bf z}) =C_1q,\quad ~
	S_2({\bf z}) = - \frac{1}{2}C_2u,\\
	&K=\left[\begin{array}{ccccccc}
	0 & -1 &0&0&0&0&0\\
	1 & 0 &0&0&0&0&0\\
	0 & 0 &0&0&0&0&0\\
	0 & 0 &0&0&0&0&0\\
	0 & 0 &0&0&0&-\frac{1}{2}&0\\
	0 & 0 &0&0&\frac{1}{2}&0&0\\
	0 & 0 &0&0&0&0&0
	\end{array}\right], 
	\quad L=\left[\begin{array}{ccccccc}
	0 & 0&1&0&0&0&0\\
	0 & 0 &0&1&0&0&0\\
	-1& 0 &0&0&0&0&0\\
	0 & -1 &0&0&0&0&0\\
	0 & 0 &0&0&0&0&\frac{1}{2}\\
	0 & 0 &0&0&0&0&0\\
	0 & 0 &0&0&-\frac{1}{2}&0&0
	\end{array}\right]. 
	\end{align*} 
	From \eqref{multi-symplectic} it follows that \eqref{KGS} possesses the stochastic multi-symplectic conservation law locally
	\begin{equation}
	d_{t} \left(\mathrm{d} \mbf z \wedge K \mathrm{d} \mbf z\right) +\partial_{x}\left(\mathrm{d} \mbf z \wedge L \mathrm{d} \mbf z\right) = 0, \quad  a.s., 
	\end{equation}
	equivalently,
	\begin{align*}
	d(2\mathrm{d} q \wedge \mathrm{d} p + \mathrm{d} r \wedge \mathrm{d} u ) +\partial_{x}( 2\mathrm{d} p \wedge \mathrm{d} f + 2\mathrm{d} q \wedge \mathrm{d} g + \mathrm{d} u \wedge \mathrm{d} w)dt =0,\quad{a.s.,}
	\end{align*} 
	which implies the result. 
\end{proof}

As shown above, the stochastic Klein--Gordon--Schr\"odinger equation possesses the infinite-dimensional stochastic symplectic structure, stochastic multi-symplectic conservation law, averaged charge evolution law, and averaged energy evolution law.  
Now we introduce the fully-discrete schemes inheriting the properties of the original system.

\section{Fully-discrete schemes preserving averaged energy and charge evolution law}

In this section, we introduce fully-discrete schemes, which preserve both the averaged charge evolution law and energy evolution law of \eqref{KGS}. 

For spatial discretization, we first introduce a uniform partition with $x_i = a +ih, 0\leq i\leq M$, where $M$ is a positive integer, $\Omega_h=\left\{x_i \mid  1 \leq i \leq M-1\right\},$ $I_i = (x_{i}, x_{i+1})$ and $h=(b-a)/M$ denotes the spatial step size. 
Denoting the approximations of $p(x_i,t), q(x_i,t),v(x_i,t),u(x_i,t)$ at $x_i\in \Omega_h$ by $P_i(t), Q_i(t),V_i(t),U_i(t)$  and making use of the central difference, we have 
\begin{align}\label{semi}
\begin{cases}
dQ_i(t) = \left( \delta_x^2P_i(t) +U_i(t)\cdot P_i(t)\right)dt - C_1\eta_1(x_i)dB_0(t), \\
dP_i(t)= -\left(\delta_x^2Q_i(t) + U_i(t)\cdot Q_i(t)\right)dt + C_1\eta_1(x_i)dB_1(t), \\
dV_i(t) = \frac{1}{2} \left(\delta_x^2U_i(t) - U_i(t) +P_i(t)\cdot P_i(t)+Q_i(t)\cdot Q_i(t)\right)dt + \frac{1}{2}C_2\eta_2(x_i)dB_2(t), \\
dU_i(t) = 2 V_i(t) dt.
\end{cases}
\end{align}
where $\delta_x^2P_i= \left(P_{i-1} - 2P_i +P_{i+1}\right)/h^2,$  $\delta_x^2Q_i= \left(Q_{i-1} - 2Q_i +Q_{i+1}\right)/h^2,$ and $\delta_x^2U_i= \left(U_{i-1} - 2U_i +U_{i+1}\right)/h^2$ for $i\in\{1,\ldots,M-1\}$ approximate $p_{xx}$, $q_{xx}$ and $u_{xx}$ at $x_i\in \Omega_h$ , respectively. 
Let
\begin{equation*}\begin{aligned}
&\mbf P_M= \left(P_1, P_2, \dots,P_{M-1}\right)^\top,\quad\mbf Q_M = \left(Q_1, Q_2, \dots,Q_{M-1}\right)^\top,\\
&\mbf U_M= \left(U_1, U_2, \dots,U_{M-1}\right)^\top,\quad\mbf V_M = \left(V_1, V_2, \dots,V_{M-1}\right)^\top,\\
&\boldsymbol{\eta}_1=\left(\eta_1(x_1), \eta_1(x_2), \dots,\eta_1(x_{M-1})\right)^\top,\quad \boldsymbol{\eta}_2=\left(\eta_2(x_1), \eta_2(x_2), \dots,\eta_2(x_{M-1})\right)^\top,
\end{aligned} \end{equation*}
and \begin{equation}
\mbf{A}=\frac{1}{h^2}{
	\left[ \begin{array}{cccccccc}
	-2&1&0&0&\cdots&0&0&0\\[0.05in]
	1&-2&1&0&\cdots&0&0&0\\
	\vdots&\vdots&\vdots&\vdots&&\vdots&\vdots&\vdots\\
	0&0&0&0&\cdots&1&-2&1\\[0.05in]
	0&0&0&0&\cdots&0&1&-2
	\end{array}
	\right ]}_{(M-1)\times (M-1)}.
\end{equation}	
We obtain the semi-discretization \eqref{semi} in matrix-vector form as follows
\begin{equation}
\label{semi-discrete}
\left\{
\begin{aligned}
&d\mbf Q_M(t) = \left( \mbf A\mbf P_M(t) +\mbf U_M(t)\cdot \mbf P_M(t)\right)dt - C_1\boldsymbol{\eta}_1dB_0(t), \\
&d\mbf P_M(t)= -\left(\mbf A\mbf Q_M(t) + \mbf U_M(t)\cdot \mbf Q_M(t)\right)dt + C_1\boldsymbol{\eta}_1dB_1(t), \\
&d\mbf V_M(t) = \frac{1}{2} \left(\mbf A\mbf U_M(t) - \mbf U_M(t) +\mbf P_M(t)\cdot \mbf P_M(t)+\mbf Q_M(t)\cdot \mbf Q_M(t)\right)dt + \frac{1}{2}C_2\boldsymbol{\eta}_2dB_2(t), \\
&d\mbf U_M(t) = 2 \mbf V_M(t) dt.
\end{aligned}
\right.
\end{equation}
Here, $\mbf U_M(t)\cdot \mbf P_M(t)$ denotes the components multiplication one by one between $\mbf U_M(t)$ and $\mbf P_M(t)$, the same as $\mbf U_M(t)\cdot \mbf Q_M(t),\mbf P_M(t)\cdot \mbf P_M(t),\mbf Q_M(t)\cdot \mbf Q_M(t)$.
Denote the inner products of discrete Hilbert space $l^2_h$ by 
$\langle f,g\rangle_h=h\sum\limits_{i=1}^{M-1}\operatorname{Re}(f_i\bar g_i)$ and $\|f\|_h=\sqrt{\langle f,f\rangle_h}.$ 
As a result, the charge and energy of \eqref{semi-discrete} read 
\begin{align*}
&\mathcal{N}(\mbf P_M,\mbf Q_M) = \|\mbf P_M\|_h^2 +\|\mbf Q_M\|_h^2,\\
&\mathcal{H}(\mbf P_M,\mbf Q_M,\mbf U_M,\mbf V_M) =  2\langle \mbf U_M, \mbf P_M\cdot \mbf P_M+ \mbf Q_M\cdot \mbf Q_M\rangle_h- \| \mbf U_M\|_h^2 - 4\| \mbf V_M\|_h^2 \\
&\quad\quad\quad\quad\quad\quad\quad\quad\quad\quad\quad\quad+ \langle \mbf U_M, \mbf A\mbf U_M\rangle_h+ 2  \langle \mbf P_M, \mbf A\mbf P_M\rangle_h +  2\langle \mbf Q_M, \mbf A\mbf Q_M\rangle_h,
\end{align*}
respectively. 
The central difference \eqref{semi-discrete} preserves the averaged charge and energy evolution law, which are shown in the following theorems. 

\begin{thm} \label{thm_charge}
	Assume $\left(\varphi_0, u_0, \mu_0\right) \in \mathcal{E}_0:=H_0^1 \times H_0^1 \times H,$ $\eta_1 \in H_0^1$, $\eta_2 \in H.$ 
	The averaged charge for semi-discretization \eqref{semi-discrete} has the following evolutionary relationship
	\begin{equation}\label{eq3.8}
	\mathbb{E}[\mathcal{N}(\mbf P_M(t),\mbf Q_M(t))  ] = \mathbb{E}[\mathcal{N}(\mbf P_M(0),\mbf Q_M(0))  ]  + 2C_1^2\|\boldsymbol{\eta}_1\|_h^2t.
	\end{equation}
\end{thm}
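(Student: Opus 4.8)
The plan is to mimic, at the discrete level, the proof of the continuous averaged charge evolution law (the first Lemma in the excerpt). Concretely, I would apply the finite-dimensional It\^o formula to $\|\mbf P_M(t)\|_h^2$ and $\|\mbf Q_M(t)\|_h^2$ separately, using the SDE system \eqref{semi-discrete}. Writing $\|\mbf P_M\|_h^2 = h\sum_{i=1}^{M-1} P_i^2$ (all quantities here are real), It\^o's formula gives
\begin{align*}
d\|\mbf P_M(t)\|_h^2 &= 2\langle \mbf P_M(t), d\mbf P_M(t)\rangle_h + C_1^2\|\boldsymbol{\eta}_1\|_h^2\,dt\\
&= -2\langle \mbf P_M(t), \mbf A\mbf Q_M(t) + \mbf U_M(t)\cdot\mbf Q_M(t)\rangle_h\,dt + 2C_1\langle \mbf P_M(t), \boldsymbol{\eta}_1\rangle_h\,dB_1(t) + C_1^2\|\boldsymbol{\eta}_1\|_h^2\,dt,
\end{align*}
and symmetrically
\begin{align*}
d\|\mbf Q_M(t)\|_h^2 &= 2\langle \mbf Q_M(t), \mbf A\mbf P_M(t) + \mbf U_M(t)\cdot\mbf P_M(t)\rangle_h\,dt - 2C_1\langle \mbf Q_M(t), \boldsymbol{\eta}_1\rangle_h\,dB_0(t) + C_1^2\|\boldsymbol{\eta}_1\|_h^2\,dt.
\end{align*}
The quadratic-variation (It\^o correction) term in each case is $C_1^2\|\boldsymbol{\eta}_1\|_h^2\,dt$, since the diffusion coefficient is the deterministic vector $\pm C_1\boldsymbol{\eta}_1$; summing the two gives the $2C_1^2\|\boldsymbol{\eta}_1\|_h^2 t$ contribution after integrating in $t$.

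Next I would add the two equations and observe that the drift terms cancel. The key algebraic point is the discrete analogue of integration by parts: since $\mbf A$ is symmetric, $\langle \mbf P_M, \mbf A\mbf Q_M\rangle_h = \langle \mbf Q_M, \mbf A\mbf P_M\rangle_h$, so the linear terms $-2\langle \mbf P_M, \mbf A\mbf Q_M\rangle_h$ and $+2\langle \mbf Q_M, \mbf A\mbf P_M\rangle_h$ cancel. For the nonlinear terms, the componentwise-product structure gives $\langle \mbf P_M, \mbf U_M\cdot\mbf Q_M\rangle_h = h\sum_i U_i P_i Q_i = \langle \mbf Q_M, \mbf U_M\cdot\mbf P_M\rangle_h$, so $-2\langle \mbf P_M, \mbf U_M\cdot\mbf Q_M\rangle_h$ and $+2\langle \mbf Q_M, \mbf U_M\cdot\mbf P_M\rangle_h$ also cancel. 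Hence all $dt$-drift terms vanish identically and we are left with
\begin{equation*}
\mathcal{N}(\mbf P_M(t),\mbf Q_M(t)) = \mathcal{N}(\mbf P_M(0),\mbf Q_M(0)) + 2C_1\int_0^t\langle \mbf P_M(s),\boldsymbol{\eta}_1\rangle_h\,dB_1(s) - 2C_1\int_0^t\langle \mbf Q_M(s),\boldsymbol{\eta}_1\rangle_h\,dB_0(s) + 2C_1^2\|\boldsymbol{\eta}_1\|_h^2 t.
\end{equation*}

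Finally I would take expectations. The stochastic integrals are martingales with zero mean provided the integrands are square-integrable, i.e. $\mathbb{E}\int_0^t \langle \mbf P_M(s),\boldsymbol{\eta}_1\rangle_h^2\,ds < \infty$ and similarly for $\mbf Q_M$; this follows from the well-posedness of \eqref{semi-discrete} (the solution has moments of all orders on $[0,T]$, being a finite-dimensional SDE with locally Lipschitz coefficients and an a priori charge bound, or simply because the charge stays in $L^1$ by the identity just derived). Taking $\mathbb{E}$ then kills the two It\^o integrals and yields exactly \eqref{eq3.8}.

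I do not expect a serious obstacle here; the only point requiring a little care is justifying that the stochastic integrals have vanishing expectation, which rests on the integrability of $\mbf P_M,\mbf Q_M$ on $[0,T]$ — this is where the standing assumption $(\varphi_0,u_0,\mu_0)\in\mathcal{E}_0$, $\eta_1\in H_0^1$, $\eta_2\in H$ (ensuring finite initial data and hence finite moments of the discrete solution) is used. Everything else is the direct discrete mirror of the continuous proof: It\^o's formula, symmetry of $\mbf A$, symmetry of the componentwise product, cancellation of drifts, and taking expectation.
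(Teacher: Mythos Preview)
Your proposal is correct and follows essentially the same approach as the paper's proof: apply It\^o's formula to $\|\mbf P_M(t)\|_h^2$ and $\|\mbf Q_M(t)\|_h^2$, use the symmetry of $\mbf A$ (and of the componentwise product) to cancel the drift terms, and take expectation. The paper's argument is slightly terser (it does not spell out the integrability justification for the vanishing of the It\^o integrals or the componentwise-product cancellation), but the structure is identical.
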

\begin{proof}
	By the  It\^o's  formula for $\|\mbf P_M(t)\|_h^2$ and $\|\mbf Q_M(t)\|_h^2$, respectively, we deduce
	\begin{align*}
	\label{eq3.9}
	\|\mbf P_M(t)\|_h^2= &\|\mbf P_M(0)\|_h^2  - \int_{0}^t \langle 2\mbf P_M(s),  \mbf A\mbf Q_M(s) + \mbf U_M(s)\cdot \mbf Q_M(s)\rangle_h ds +\int_{0}^t \langle 2\mbf P_M(s), C_1\boldsymbol{\eta}_1\rangle_hdB_1(s)\\
	&+C_1^2\|\boldsymbol{\eta}_1\|_h^2t,\\
	\|\mbf Q_M(t)\|_h^2= &\|\mbf Q_M(0)\|_h^2  + \int_{0}^t \langle 2\mbf Q_M(s), \mbf A\mbf P_M(s) +\mbf U_M(s)\cdot \mbf P_M(s)\rangle_h ds
	-\int_{0}^t \langle 2\mbf Q_M(s), C_1\boldsymbol{\eta}_1\rangle_hdB_0(s)\\
	&+C_1^2\|\boldsymbol{\eta}_1\|_h^2t.     
	\end{align*}
	Making use of the symmetric property of matrix $ \mbf A$ i.e., $\langle \mbf P_M(s), \mbf A\mbf Q_M(s) \rangle_h = \langle \mbf Q_M(s),  \mbf A\mbf P_M(s) \rangle_h$ and  taking the expectation yield the result.
\end{proof}

\begin{thm} \label{thm_energy}
	Assume $\left(\varphi_0, u_0, \mu_0\right) \in \mathcal{E}_0:=H_0^1 \times H_0^1 \times H,$ $\eta_1 \in H_0^1$, $\eta_2 \in H.$ 
	The semi-discretization \eqref{semi-discrete} has the following averaged energy evolution law
	\begin{align}
	&\mathbb{E}[\mathcal{H}(\mbf P_M(t),\mbf Q_M(t),\mbf U_M(t),\mbf V_M(t))]\nonumber\\
	=&\mathbb{E}[ \mathcal{H}(\mbf P_M(0),\mbf Q_M(0),\mbf U_M(0),\mbf V_M(0))]- C_2^2 \mathcal{Q}_2 t+ 4C_1^2\widetilde{\mathcal{Q}}_1t+ 4C_1^2\mathbb{E}\Big[\int_{0}^t \langle \mbf U_M(s), \boldsymbol{\eta}_1\cdot \boldsymbol{\eta}_1\rangle_h ds\Big], 
	\end{align}
	where  $\mathcal{Q}_2=\|\boldsymbol{\eta}_2\|_h^2, ~\widetilde{\mathcal{Q}}_1 = \langle \boldsymbol{\eta}_1, \mbf A\boldsymbol{\eta}_1\rangle_h$.
\end{thm}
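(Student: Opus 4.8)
The plan is to imitate, at the discrete level, the derivation of the continuous averaged energy evolution law and of Theorem~\ref{thm_charge}: apply the It\^o formula to each of the six summands of $\mathcal{H}$ along \eqref{semi-discrete}, add the resulting stochastic differentials, verify that every state-dependent drift contribution cancels, and then take the expectation so that the It\^o (stochastic) integrals disappear. The only ingredient used beyond It\^o's formula is the symmetry $\mathbf{A}^\top=\mathbf{A}$, i.e. $\langle\mathbf{f},\mathbf{A}\mathbf{g}\rangle_h=\langle\mathbf{g},\mathbf{A}\mathbf{f}\rangle_h$, which is the discrete substitute for the integration by parts used in the continuous proof. Since noise in \eqref{semi-discrete} enters only the $\mathbf{Q}_M$, $\mathbf{P}_M$ and $\mathbf{V}_M$ equations, while $d\mathbf{U}_M=2\mathbf{V}_M\,dt$ is purely deterministic, the It\^o correction terms arise exactly from $\|\mathbf{V}_M\|_h^2$, $\langle\mathbf{P}_M,\mathbf{A}\mathbf{P}_M\rangle_h$, $\langle\mathbf{Q}_M,\mathbf{A}\mathbf{Q}_M\rangle_h$ and the coupling term $\langle\mathbf{U}_M,\mathbf{P}_M\cdot\mathbf{P}_M+\mathbf{Q}_M\cdot\mathbf{Q}_M\rangle_h$; these four corrections will produce the three extra terms in the statement.

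I would organise the six summands of $\mathcal{H}$ into three blocks. \emph{Block (i)}, $-\|\mathbf{U}_M\|_h^2-4\|\mathbf{V}_M\|_h^2+\langle\mathbf{U}_M,\mathbf{A}\mathbf{U}_M\rangle_h$: using $d\mathbf{U}_M=2\mathbf{V}_M\,dt$ and $\mathbf{A}^\top=\mathbf{A}$, the drift pairs $\pm4\langle\mathbf{V}_M,\mathbf{A}\mathbf{U}_M\rangle_h$ and $\pm4\langle\mathbf{U}_M,\mathbf{V}_M\rangle_h$ cancel, so this block reduces to $-4\langle\mathbf{V}_M,\mathbf{P}_M\cdot\mathbf{P}_M+\mathbf{Q}_M\cdot\mathbf{Q}_M\rangle_h\,dt-C_2^2\|\boldsymbol{\eta}_2\|_h^2\,dt$ plus a martingale, the constant being $-4\cdot\frac14 C_2^2\|\boldsymbol{\eta}_2\|_h^2$ coming from $\|\mathbf{V}_M\|_h^2$. \emph{Block (ii)}, $2\langle\mathbf{P}_M,\mathbf{A}\mathbf{P}_M\rangle_h+2\langle\mathbf{Q}_M,\mathbf{A}\mathbf{Q}_M\rangle_h$: the $\langle\mathbf{A}\mathbf{P}_M,\mathbf{A}\mathbf{Q}_M\rangle_h$ drifts cancel by symmetry, leaving $4\big(\langle\mathbf{A}\mathbf{Q}_M,\mathbf{U}_M\cdot\mathbf{P}_M\rangle_h-\langle\mathbf{A}\mathbf{P}_M,\mathbf{U}_M\cdot\mathbf{Q}_M\rangle_h\big)\,dt$ together with the two It\^o corrections summing to $4C_1^2\langle\boldsymbol{\eta}_1,\mathbf{A}\boldsymbol{\eta}_1\rangle_h\,dt=4C_1^2\widetilde{\mathcal{Q}}_1\,dt$, plus a martingale.

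\emph{Block (iii)}, $2\langle\mathbf{U}_M,\mathbf{P}_M\cdot\mathbf{P}_M+\mathbf{Q}_M\cdot\mathbf{Q}_M\rangle_h$: applying the It\^o product rule to $h\sum_i U_i(P_i^2+Q_i^2)$, the cross-variation between $dU_i$ and $d(P_i^2+Q_i^2)$ vanishes (no diffusion in $dU_i$), the identity $-2U_iP_iQ_i+2U_iQ_iP_i=0$ kills the $\mathbf{U}_M$-cubic drift coming from the coupling terms $U_i P_i$, $U_i Q_i$, and one is left with $4\langle\mathbf{V}_M,\mathbf{P}_M\cdot\mathbf{P}_M+\mathbf{Q}_M\cdot\mathbf{Q}_M\rangle_h\,dt+4\big(\langle\mathbf{A}\mathbf{P}_M,\mathbf{U}_M\cdot\mathbf{Q}_M\rangle_h-\langle\mathbf{A}\mathbf{Q}_M,\mathbf{U}_M\cdot\mathbf{P}_M\rangle_h\big)\,dt+4C_1^2\langle\mathbf{U}_M,\boldsymbol{\eta}_1\cdot\boldsymbol{\eta}_1\rangle_h\,dt$ plus a martingale, the last constant-size term being the sum of the two equal It\^o corrections from $\mathbf{P}_M\cdot\mathbf{P}_M$ and from $\mathbf{Q}_M\cdot\mathbf{Q}_M$. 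Adding the three blocks: the $4\langle\mathbf{V}_M,\mathbf{P}_M\cdot\mathbf{P}_M+\mathbf{Q}_M\cdot\mathbf{Q}_M\rangle_h$ drift of (iii) cancels the $-4\langle\mathbf{V}_M,\cdots\rangle_h$ drift of (i), and the mixed drifts $\langle\mathbf{A}\mathbf{P}_M,\mathbf{U}_M\cdot\mathbf{Q}_M\rangle_h$, $\langle\mathbf{A}\mathbf{Q}_M,\mathbf{U}_M\cdot\mathbf{P}_M\rangle_h$ from (ii) and (iii) cancel exactly; what survives is $d\mathcal{H}=\big(4C_1^2\langle\mathbf{U}_M,\boldsymbol{\eta}_1\cdot\boldsymbol{\eta}_1\rangle_h-C_2^2\|\boldsymbol{\eta}_2\|_h^2+4C_1^2\widetilde{\mathcal{Q}}_1\big)\,dt$ plus a finite sum of It\^o integrals against $B_0,B_1,B_2$. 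Integrating over $[0,t]$ and taking $\mathbb{E}$ — the stochastic integrals being mean-zero square-integrable martingales thanks to the assumed differentiability/integrability of the semi-discrete solution and the boundedness of $\boldsymbol{\eta}_1,\boldsymbol{\eta}_2$ (if needed, via a localising sequence of stopping times followed by a dominated/monotone convergence argument) — yields precisely the asserted averaged energy evolution law with $\mathcal{Q}_2=\|\boldsymbol{\eta}_2\|_h^2$ and $\widetilde{\mathcal{Q}}_1=\langle\boldsymbol{\eta}_1,\mathbf{A}\boldsymbol{\eta}_1\rangle_h$.

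The main obstacle is the bookkeeping in Block (iii). One must check that the It\^o product rule on $\langle\mathbf{U}_M,\mathbf{P}_M\cdot\mathbf{P}_M+\mathbf{Q}_M\cdot\mathbf{Q}_M\rangle_h$ produces the mixed drift terms with exactly the signs and coefficients needed to annihilate those coming from Block (ii) — this is where the skew structure of the coupling ($+\mathbf{U}_M\cdot\mathbf{P}_M$ in the $\mathbf{Q}_M$-equation versus $-\mathbf{U}_M\cdot\mathbf{Q}_M$ in the $\mathbf{P}_M$-equation) is essential — and that the accompanying It\^o correction $4C_1^2\langle\mathbf{U}_M,\boldsymbol{\eta}_1\cdot\boldsymbol{\eta}_1\rangle_h$ is genuinely state-dependent, so that, unlike the $C_2^2$- and $\widetilde{\mathcal{Q}}_1$-contributions, it cannot be integrated to a deterministic multiple of $t$ and instead remains as the term $4C_1^2\mathbb{E}\big[\int_0^t\langle\mathbf{U}_M(s),\boldsymbol{\eta}_1\cdot\boldsymbol{\eta}_1\rangle_h\,ds\big]$.
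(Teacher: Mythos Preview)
Your proposal is correct and follows essentially the same route as the paper: apply It\^o's formula to each summand of $\mathcal{H}$, use the symmetry of $\mathbf{A}$ to cancel the mixed drift terms, and take expectation to kill the stochastic integrals. Your three-block organisation is merely a repackaging of the paper's term-by-term computation (which writes out $\|\mathbf U_M\|_h^2$, $\|\mathbf V_M\|_h^2$, $\langle\mathbf U_M,\mathbf A\mathbf U_M\rangle_h$, $\langle\mathbf U_M,\mathbf P_M\cdot\mathbf P_M\rangle_h$, $\langle\mathbf U_M,\mathbf Q_M\cdot\mathbf Q_M\rangle_h$, $\langle\mathbf P_M,\mathbf A\mathbf P_M\rangle_h$, $\langle\mathbf Q_M,\mathbf A\mathbf Q_M\rangle_h$ separately and then combines), and the cancellations you identify are exactly the ones the paper relies on.
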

\begin{proof}
	Applying the It\^o's formula to $\| \mbf U_M(t)\|_h^2$ and $\| \mbf V_M(t)\|_h^2$, we obtain
	\begin{align*}
	\| \mbf U_M(t)\|_h^2 = &\| \mbf U_M(0)\|_h^2 + 4\int_{0}^t \langle \mbf U_M(t),  \mbf V_M(t)\rangle_h ds, \\
	\|\mbf V_M(t)\|_h^2= &\|\mbf V_M(0)\|_h^2  
	+ \int_{0}^t \langle \mbf V_M(s),  -\mbf U_M(s)  +\mbf P_M(s)\cdot \mbf P_M(s)+\mbf Q_M(s)\cdot \mbf Q_M(s)\rangle_h ds\\
	&+ \int_{0}^t \langle \mbf V_M(s),  A\mbf U_M(s)  \rangle_h ds+\int_{0}^t \langle \mbf V_M(s), C_2\boldsymbol{\eta}_2\rangle_hdB_2(t)
	+\frac{1}{4}C_2^2\|\boldsymbol{\eta}_2\|_h^2t,\\
	\langle \mbf U_M(t),  A\mbf U_M(t)  \rangle_h=&\langle \mbf U_M(0),  A\mbf U_M(0)  \rangle_h+2\int_{0}^t \Big\langle \frac{d\mbf U_M(s) }{ds},  A\mbf U_M(s)  \Big\rangle_h ds\\
	=&\langle \mbf U_M(0),  A\mbf U_M(0)  \rangle_h+4\int_{0}^t \langle \mbf V_M(s),  A\mbf U_M(s)  \rangle_h ds.
	\end{align*}
	Based on the above three equations, we have
	\begin{equation}\begin{aligned} \label{eq3.13}
	&\mathbb{E}[ \| \mbf U_M(t)\|_h^2 + 4\|\mbf V_M(t)\|_h^2-\langle \mbf U_M(t),  A\mbf U_M(t)  \rangle_h ]- \mathbb{E}[\| \mbf U_M(0)\|_h^2 +  4\|\mbf V_M(0)\|_h^2 -  \langle \mbf U_M(0),  A\mbf U_M(0)  \rangle_h ]\\
	=&\mathbb{E}\Big[ \int_{0}^t 4\langle \mbf V_M(s), \mbf P_M(s)\cdot \mbf P_M(s)+\mbf Q_M(s)\cdot \mbf Q_M(s)\rangle_h ds \Big]
	+ C_2^2\|\boldsymbol{\eta}_2\|_h^2t.\\
	\end{aligned}\end{equation}
	A straight calculation leads to
	\begin{align*}
	&\langle \mbf U_M(t),  \mbf P_M(t)\cdot \mbf P_M(t) \rangle_h-\langle \mbf U_M(0),  \mbf P_M(0)\cdot \mbf P_M(0) \rangle_h\\
	=&\int_{0}^t 2\langle \mbf V_M(s), \mbf P_M(s) \cdot \mbf P_M(s) \rangle_h ds- \int_{0}^t 2\langle \mbf U_M(s), \mbf P_M(s) \cdot (A\mbf Q_M(s) + \mbf U_M(s)\cdot \mbf Q_M(s) ) \rangle_h ds \\
	&+2C_1\int_{0}^t\langle \mbf U_M(s),  \mbf P_M(s)\cdot \boldsymbol{\eta}_1\rangle_hdB_1(s)+ C_1^2\int_{0}^t \langle \mbf U_M(s),  \boldsymbol{\eta}_1\cdot \boldsymbol{\eta}_1 \rangle_hds.\\
	&\langle \mbf U_M(t),  \mbf Q_M(t)\cdot \mbf Q_M(t) \rangle_h-\langle \mbf U_M(0),  \mbf Q_M(0)\cdot \mbf Q_M(0) \rangle_h \\
	=&\int_{0}^t 2\langle \mbf V_M(s), \mbf Q_M(s) \cdot \mbf Q_M(s) \rangle_h ds +\int_{0}^t 2\langle \mbf U_M(s), \mbf Q_M(s) \cdot (A\mbf P_M(s) + \mbf U_M(s)\cdot \mbf P_M(s) ) \rangle_h ds\\
	&-2C_1\int_{0}^t\langle \mbf U_M(s),\mbf Q_M(s)\cdot\boldsymbol{\eta}_1\rangle_hdB_0(s)
	+C_1^2\int_{0}^t \langle\mbf U_M(s),  \boldsymbol{\eta}_1\cdot \boldsymbol{\eta}_1\rangle_hds.\\
	&\langle \mbf P_M(t),  \mbf A\mbf P_M(t)  \rangle_h -\langle \mbf P_M(0),  \mbf A\mbf P_M(0)  \rangle_h\\
	=&- \int_{0}^t  2\langle \mbf A\mbf P_M(s) ,\mbf A\mbf Q_M(s) + \mbf U_M(s)\cdot \mbf Q_M(s)\rangle_h ds+ 2C_1\int_{0}^t \langle \mbf A\mbf P_M(s) ,\boldsymbol{\eta}_1\rangle_hdB_1(s)
	+ C_1^2\int_{0}^t \langle \mbf A\boldsymbol{\eta}_1,\boldsymbol{\eta}_1\rangle_hds.\\
	&\langle \mbf Q_M(t),  \mbf A\mbf Q_M(t)  \rangle_h -\langle \mbf Q_M(0),  \mbf A\mbf Q_M(0)  \rangle_h \\
	=& \int_{0}^t  2\langle \mbf A\mbf Q_M(s) ,\mbf A\mbf P_M(s) + \mbf U_M(s)\cdot \mbf P_M(s)\rangle_h ds - 2C_1\int_{0}^t  \langle \mbf A\mbf Q_M(s) ,\boldsymbol{\eta}_1\rangle_hdB_0(s)
	+ C_1^2\int_{0}^t \langle \mbf A\boldsymbol{\eta}_1,\boldsymbol{\eta}_1\rangle_hds.
	\end{align*}
	Combining the above equations and taking expectation complete proof. 
\end{proof}

For any $T>0$, we partition the time domain $[0,T]$ uniformly with nodes $t_n = n\Delta t, n = 0,1,\ldots,N$ and $N = [T/\Delta t]$. 
The fully-discrete scheme preserving charge and energy evolution law also depends on the numerical discretization in time, which confronts the difficulty brought by the treatment of the time approximation on both drift and diffusion coefficients. 
By introducing some modified terms and
taking advantage of finite difference method to solve  \eqref{semi-discrete}, we have the following fully-discrete scheme
\begin{subequations}\label{CEFD}
\begin{align}
\label{CEFD3}
\mbf P_M^{n} = &\mbf P_M^{n-1}-\Delta t\left( \mbf A \frac{\mbf Q_M^{n}+\mbf Q_M^{n-1}}{2}+\frac{1}{2}\mbf U_M^{n}\cdot\left(\mbf Q_M^{n}+\mbf Q_M^{n-1}\right)\right)+C_1\boldsymbol{\eta}_1\Delta B_1^n\nonumber\\
&+\frac{1}{2}\Delta t C_1\Delta B_0^n\left(
A\boldsymbol{\eta}_1+\mbf U_M^n\cdot\boldsymbol{\eta}_1\right),\\
\label{CEFD4}
\mbf Q_M^{n} = &\mbf Q_M^{n-1}+\Delta t\left( \mbf A \frac{\mbf P_M^{n}+\mbf P_M^{n-1}}{2}+\frac{1}{2}\mbf U_M^{n}\cdot\left(\mbf P_M^{n}+\mbf P_M^{n-1}\right)\right)-C_1\boldsymbol{\eta}_1\Delta B_0^n\nonumber\\
&+\frac{1}{2}\Delta t C_1\Delta B_1^n\left(
A\boldsymbol{\eta}_1+\mbf U_M^n\cdot\boldsymbol{\eta}_1\right), \\
\label{CEFD5}
\mbf V_M^{n} =&\mbf V_M^{n-1}+\frac{1}{2}\Delta t\left(  \mbf A \frac{\mbf U_M^{n}+\mbf U_M^{n-1}}{2}-\frac{\mbf U_M^{n}+\mbf U_M^{n-1}}{2}+\mbf P_M^{n-1} \cdot \mbf P_M^{n-1}+\mbf Q_M^{n-1} \cdot \mbf Q_M^{n-1}\right)\\
&+\frac{1}{2}C_2\boldsymbol{\eta}_2\Delta B_2^n+\Delta t\left(C_1\mbf P_M^{n-1}\cdot\boldsymbol{\eta}_1\Delta B_1^n-C_1\mbf Q_M^{n-1}\cdot\boldsymbol{\eta}_1\Delta B_0^n+\Delta tC_1^2\boldsymbol{\eta}_1\cdot\boldsymbol{\eta}_1\right), \nonumber\\
\label{CEFD6}
\mbf U_M^{n} =&\mbf U_M^{n-1}+\Delta t\left(\mbf V_M^{n}+\mbf V_M^{n-1}\right)+\frac{1}{2}\Delta t C_2\boldsymbol{\eta}_2\Delta B_2^n,
\end{align}
\end{subequations}
where $\Delta B_0^n = B_0(t_n)-B_0(t_{n-1}), ~\Delta B_1^n = B_1(t_n)-B_1(t_{n-1}),~\Delta B_2^n = B_2(t_n)-B_2(t_{n-1})$.

\begin{thm} \label{thm5.1}
	Assume $\left(\varphi_0, u_0, \mu_0\right) \in \mathcal{E}_0:=H_0^1 \times H_0^1 \times H,$ $\eta_1 \in H_0^1$, $\eta_2 \in H.$ 
	The averaged charge for fully-discrete scheme \eqref{CEFD} has the following evolutionary relationship
	\begin{equation*}
	\mathbb{E}[\mathcal{N}(\mbf P_M^n,\mbf Q_M^n)  ] = \mathbb{E}[\mathcal{N}(\mbf P_M^0,\mbf Q_M^0)]  + 2C_1^2\|\boldsymbol{\eta}_1\|_h^2t_n,
	\end{equation*}
	where $n\in\{0,1,\ldots,N\}.$
\end{thm}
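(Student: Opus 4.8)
The plan is to mimic the proof of Theorem~\ref{thm_charge} at the discrete-in-time level, computing $\mathcal{N}(\mbf P_M^n,\mbf Q_M^n)-\mathcal{N}(\mbf P_M^{n-1},\mbf Q_M^{n-1})=\|\mbf P_M^n\|_h^2-\|\mbf P_M^{n-1}\|_h^2+\|\mbf Q_M^n\|_h^2-\|\mbf Q_M^{n-1}\|_h^2$ directly from the update formulas \eqref{CEFD3}--\eqref{CEFD4}. For the first difference I would write $\|\mbf P_M^n\|_h^2-\|\mbf P_M^{n-1}\|_h^2=\langle \mbf P_M^n-\mbf P_M^{n-1},\,\mbf P_M^n+\mbf P_M^{n-1}\rangle_h$, substitute the expression for $\mbf P_M^n-\mbf P_M^{n-1}$ from \eqref{CEFD3}, and similarly for $\mbf Q_M$. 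The deterministic drift terms are designed so that, after pairing $\mbf P_M^n$-increment against $\mbf P_M^n+\mbf P_M^{n-1}$ and $\mbf Q_M^n$-increment against $\mbf Q_M^n+\mbf Q_M^{n-1}$, the $\mbf A$-terms cancel by symmetry of $\mbf A$ (as in Theorem~\ref{thm_charge}) and the $\mbf U_M^n\cdot(\cdot)$-terms cancel against each other since $\langle \mbf U_M^n\cdot(\mbf Q_M^n+\mbf Q_M^{n-1}),\,\mbf P_M^n+\mbf P_M^{n-1}\rangle_h=\langle \mbf U_M^n\cdot(\mbf P_M^n+\mbf P_M^{n-1}),\,\mbf Q_M^n+\mbf Q_M^{n-1}\rangle_h$ by commutativity of componentwise multiplication. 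This is the discrete analogue of the cancellation achieved in the continuous case by integration by parts, and it is precisely why the midpoint-in-time, implicit-in-$\mbf U_M^n$ structure was chosen.

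After the drift terms cancel, what remains is a collection of noise-dependent terms: the ``genuine'' martingale-type terms $\pm2C_1\langle \boldsymbol{\eta}_1,\,\cdot\rangle_h\,\Delta B_j^n$, the quadratic-in-$\Delta B$ terms coming from $\|C_1\boldsymbol{\eta}_1\Delta B_j^n\|_h^2$, cross terms between the modified $\tfrac12\Delta t C_1\Delta B_j^n(A\boldsymbol{\eta}_1+\mbf U_M^n\cdot\boldsymbol{\eta}_1)$ pieces and the other summands, and the $O(\Delta t^2)$ self-terms of the modified pieces. The next step is to take $\mathbb{E}$. Here I would exploit that $\Delta B_0^n,\Delta B_1^n,\Delta B_2^n$ are increments over $[t_{n-1},t_n]$: the quantities $\mbf P_M^{n-1},\mbf Q_M^{n-1},\mbf U_M^{n-1}$ are $\mathcal{F}_{t_{n-1}}$-measurable, so their products with a single $\Delta B_j^n$ have zero expectation; cross terms of the form $\Delta B_i^n\Delta B_j^n$ with $i\neq j$ vanish in expectation by independence; and $\mathbb{E}[(\Delta B_j^n)^2]=\Delta t$. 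One must be slightly careful because $\mbf P_M^n,\mbf Q_M^n,\mbf U_M^n$ are \emph{not} $\mathcal{F}_{t_{n-1}}$-measurable — they depend on $\Delta B_j^n$ through the implicit scheme — so terms pairing these with a lone $\Delta B_j^n$ need the structure of the scheme to be handled, not a naive independence argument.

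This last point is the main obstacle, and I expect it to be resolved by the specific placement of the modified terms: after the drift cancellation the coefficient multiplying each lone $\Delta B_j^n$ should reorganize into an $\mathcal{F}_{t_{n-1}}$-measurable part plus a part that combines with a quadratic $\Delta B$-term, so that everything not reducing to $\mathbb{E}[\|C_1\boldsymbol{\eta}_1\Delta B_j^n\|_h^2]$-type contributions cancels exactly. Concretely, I would collect the full $\Delta B_0^n$- and $\Delta B_1^n$-coefficients, verify that the explicit (non-modified) martingale increments retain an $\mathcal{F}_{t_{n-1}}$-measurable prefactor whose expectation against $\Delta B_j^n$ is zero, and verify that the modified-term contributions of order $\Delta t\,\Delta B_j^n$ pair against the $\Delta B_i^n\Delta B_j^n$ cross-terms so as to cancel after taking expectation, leaving only $2C_1^2\|\boldsymbol{\eta}_1\|_h^2\Delta t$ per step. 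Summing the one-step identity $\mathbb{E}[\mathcal{N}(\mbf P_M^n,\mbf Q_M^n)]-\mathbb{E}[\mathcal{N}(\mbf P_M^{n-1},\mbf Q_M^{n-1})]=2C_1^2\|\boldsymbol{\eta}_1\|_h^2\Delta t$ over $k=1,\dots,n$ and using $n\Delta t=t_n$ then gives the claimed formula. Finally I would remark that existence/uniqueness of the implicit iterates (needed for the argument to be meaningful) follows since \eqref{CEFD3}--\eqref{CEFD4} form a linear system in $(\mbf P_M^n,\mbf Q_M^n)$ once $\mbf U_M^n$ is known, and \eqref{CEFD6} together with \eqref{CEFD5} determine $\mbf U_M^n$; for $\Delta t$ small the relevant matrices are invertible.
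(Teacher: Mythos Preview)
Your overall strategy is sound and would eventually succeed, but you have correctly put your finger on the one real obstacle --- the implicit dependence of $\mbf P_M^n,\mbf Q_M^n,\mbf U_M^n$ on $\Delta B_0^n,\Delta B_1^n,\Delta B_2^n$ --- and then left its resolution at the level of ``I expect it to be resolved.'' That is the step where the work actually lies, and the paper handles it by a different (and much cleaner) device than the term-by-term bookkeeping you propose.

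The paper's idea is to \emph{absorb the noise into shifted initial data} before doing any algebra. Define
\[
\overline{\mbf P}_M^{\,1}=\mbf P_M^{0}+C_1\boldsymbol{\eta}_1\Delta B_1^1,\qquad
\overline{\mbf Q}_M^{\,1}=\mbf Q_M^{0}-C_1\boldsymbol{\eta}_1\Delta B_0^1.
\]
A direct substitution shows that the ``modified'' terms $\tfrac12\Delta tC_1\Delta B_j^n(A\boldsymbol{\eta}_1+\mbf U_M^n\cdot\boldsymbol{\eta}_1)$ in \eqref{CEFD3}--\eqref{CEFD4} are \emph{exactly} what is needed so that the update from $(\overline{\mbf P}_M^{\,1},\overline{\mbf Q}_M^{\,1})$ to $(\mbf P_M^{1},\mbf Q_M^{1})$ becomes the plain midpoint rule
\[
\mbf P_M^{1}=\overline{\mbf P}_M^{\,1}-\Delta t\Big(\mbf A\tfrac{\mbf Q_M^{1}+\overline{\mbf Q}_M^{\,1}}{2}+\tfrac12\mbf U_M^{1}\cdot(\mbf Q_M^{1}+\overline{\mbf Q}_M^{\,1})\Big),
\]
and symmetrically for $\mbf Q_M^{1}$. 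Then your symmetry/commu\-ta\-tiv\-ity argument gives the \emph{pathwise} identity $\|\mbf P_M^{1}\|_h^2+\|\mbf Q_M^{1}\|_h^2=\|\overline{\mbf P}_M^{\,1}\|_h^2+\|\overline{\mbf Q}_M^{\,1}\|_h^2$, with no expectation needed and no adaptedness issue at all. The only place expectation enters is the trivial step $\mathbb{E}\big[\|\overline{\mbf P}_M^{\,1}\|_h^2+\|\overline{\mbf Q}_M^{\,1}\|_h^2\big]=\mathbb{E}\big[\|\mbf P_M^{0}\|_h^2+\|\mbf Q_M^{0}\|_h^2\big]+2C_1^2\|\boldsymbol{\eta}_1\|_h^2\Delta t$, where everything multiplying a lone $\Delta B_j^1$ is genuinely $\mathcal F_{t_0}$-measurable.

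In short: your direct expansion is equivalent to the paper's computation but with the splitting ``unpacked,'' which is why you run into awkward cross terms whose expectations are not obviously zero. The paper's barred-variable reformulation is the missing organizing idea; once you have it, the cancellation you were hoping for becomes a one-line consequence of symmetry of $\mbf A$, and the adaptedness worry disappears entirely.
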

\begin{proof}
	From \eqref{CEFD} it follows that the associated one-step approximation is
	\begin{align}
	&\overline{\mbf P}_M^1=\mbf P_M^0 + C_1\Delta \mbf W_1^1, ~~\overline{\mbf Q}_M^1= \mbf Q_M^0- C_1\Delta \mbf W_0^1, ~~\overline{\mbf V}_M^1= \mbf V_M^0 +  \frac{1}{2}C_2\Delta \mbf W_2^1, ~~\overline{\mbf U}_M^1= \mbf U_M^0,\nonumber\\
	&\mbf P_M^{1} = \overline{\mbf P}_M^1-\Delta t\left( \mbf A \frac{\mbf Q_M^{1}+\overline{\mbf Q}_M^1}{2}+\frac{1}{2}\mbf U_M^{1}\cdot\left(\mbf Q_M^{1}+\overline{\mbf Q}_M^1\right)\right),\nonumber\\
	\label{CEFD-3}
	&\mbf Q_M^{1} =\overline{\mbf Q}_M^1+\Delta t\left( \mbf A \frac{\mbf P_M^{1}+\overline{\mbf P}_M^1}{2}+\frac{1}{2}\mbf U_M^{1}\cdot\left(\mbf P_M^{1}+\overline{\mbf P}_M^1\right)\right), \\
	&\mbf V_M^{1} =\overline{\mbf V}_M^1+\Delta t\left(\frac{1}{2} \mbf A \frac{\mbf U_M^{1}+\overline{\mbf U}_M^1}{2}-\frac{1}{2} \frac{\mbf U_M^{1}+\overline{\mbf U}_M^1}{2}+\frac{1}{2}\left(\overline{\mbf P}_M^1 \cdot \overline{\mbf P}_M^1+\overline{\mbf Q}_M^1 \cdot \overline{\mbf Q}_M^1\right)\right),\nonumber \\
	&\mbf U_M^{1} =\overline{\mbf U}_M^1+\Delta t\left(\mbf V_M^{1}+\overline{\mbf V}_M^1\right),\nonumber
	\end{align}
	where $\Delta{\bf W}_0^1=\boldsymbol{\eta}_1(B_0(h)-B_0(0)),$ $\Delta {\bf W}_1^1=\boldsymbol{\eta}_1(B_1(h)-B_1(0)),$ and $\Delta {\bf W}_2^1=\boldsymbol{\eta}_2(B_2(h)-B_2(0)).$ 
	Taking expectation leads to 
	\begin{align*}\label{eq5.3-1}
	\mathbb{E}[\langle \overline{\mbf P}_M^1, \overline{\mbf P}_M^1 \rangle_h + \langle \overline{\mbf Q}_M^1, \overline{\mbf Q}_M^1 \rangle_h ] = \mathbb{E}[\langle \mbf P_M^0, \mbf P_M^0 \rangle_h + \langle \mbf Q_M^0, \mbf Q_M^0 \rangle_h]  + 2C_1^2\|\boldsymbol{\eta}_1\|_h^2\Delta t,
	\end{align*} 
	Making use of \eqref{CEFD-3}, we obtain
	\begin{align*}
	&\langle \mbf P_M^1, \mbf P_M^1 \rangle_h- \langle \overline{\mbf P}_M^1, \overline{\mbf P}_M^1 \rangle_h = -\Delta t \Big\langle \mbf A \frac{\mbf Q_M^{1}+\overline{\mbf Q}_M^1}{2}+\frac{1}{2}\mbf U_M^{1}\cdot\left(\mbf Q_M^{1}+\overline{\mbf Q}_M^1\right), \mbf P_M^{1} +\overline{\mbf P}_M^1\Big\rangle_h, \\
	&\langle \mbf Q_M^1, \mbf Q_M^1 \rangle_h- \langle \overline{\mbf Q}_M^1, \overline{\mbf Q}_M^1 \rangle_h = \Delta t \Big\langle \mbf A \frac{\mbf P_M^{1}+\overline{\mbf P}_M^1}{2}+\frac{1}{2}\mbf U_M^{1}\cdot\left(\mbf P_M^{1}+\overline{\mbf P}_M^1\right), \mbf Q_M^{1} +\overline{\mbf Q}_M^1\Big\rangle_h.
	\end{align*}
	Taking advantage of the symmetric property of matrix $ \mbf A$ and taking expectation, we derive the result. 
\end{proof}

\begin{thm}  \label{thm5.2}
	Assume $\left(\varphi_0, u_0, \mu_0\right) \in \mathcal{E}_0:=H_0^1 \times H_0^1 \times H,$ $\eta_1 \in H_0^1$, $\eta_2 \in H.$  
	The averaged energy for fully-discrete scheme \eqref{CEFD} has the following evolutionary relationship
	\begin{align*}
	&\mathbb{E}[\mathcal{H}(\mbf P_M^n,\mbf Q_M^n,\mbf U_M^n,\mbf V_M^n)  ] \\
	=&\mathbb{E}[\mathcal{H}(\mbf P_M^0,\mbf Q_M^0,\mbf U_M^0,\mbf V_M^0)]
	- C_2^2 \mathcal{Q}_2 t_n+ 4C_1^2\widetilde{\mathcal{Q}}_1t_n+ 4C_1^2\sum_{i=0}^{n-1}
	\mathbb{E}[\langle \mbf U_M^i, \boldsymbol{\eta}_1\cdot \boldsymbol{\eta}_1\rangle_h]\Delta t, 
	\end{align*}
	where $n\in\{0,1,\ldots,N\},$  $\mathcal{Q}_2=\|\boldsymbol{\eta}_2\|_h^2, ~\widetilde{\mathcal{Q}}_1 = \langle \boldsymbol{\eta}_1, \mbf A\boldsymbol{\eta}_1\rangle_h$.
\end{thm}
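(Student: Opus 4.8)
The plan is to mimic, at the fully-discrete level, the string of It\^o computations in the proof of Theorem~\ref{thm_energy}, replacing It\^o's formula by the two elementary identities, valid for any discrete sequence $\mbf x^n$, $\|\mbf x^n\|_h^2-\|\mbf x^{n-1}\|_h^2=\langle\mbf x^n+\mbf x^{n-1},\,\mbf x^n-\mbf x^{n-1}\rangle_h$ and, by the symmetry of $\mbf A$, $\langle\mbf x^n,\mbf A\mbf x^n\rangle_h-\langle\mbf x^{n-1},\mbf A\mbf x^{n-1}\rangle_h=\langle\mbf x^n+\mbf x^{n-1},\,\mbf A(\mbf x^n-\mbf x^{n-1})\rangle_h$, then substituting the update rules \eqref{CEFD3}--\eqref{CEFD6} for the increments $\mbf x^n-\mbf x^{n-1}$, and taking the expectation only at the very end. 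As in the proof of Theorem~\ref{thm5.1}, it is convenient to organize one step of \eqref{CEFD} as a noise-injection substep (producing $\mathcal F_{t_{n-1}}$-measurable intermediate states built from $\mbf P_M^{n-1},\dots,\mbf V_M^{n-1}$ and the increments $\Delta B_0^n,\Delta B_1^n,\Delta B_2^n$) followed by an averaged/implicit-midpoint substep for the deterministic semi-discretization \eqref{semi-discrete}; the modified terms in \eqref{CEFD3}--\eqref{CEFD6} are precisely those making this decomposition consistent in conditional expectation.

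First I would treat the block $\|\mbf U_M^n\|_h^2+4\|\mbf V_M^n\|_h^2-\langle\mbf U_M^n,\mbf A\mbf U_M^n\rangle_h$. Inserting \eqref{CEFD5}--\eqref{CEFD6} and using $\mbf U_M^n-\mbf U_M^{n-1}=\Delta t(\mbf V_M^n+\mbf V_M^{n-1})+\frac{1}{2}\Delta t\,C_2\boldsymbol{\eta}_2\Delta B_2^n$, the terms containing $\mbf A\mbf U_M$ telescope away exactly as in \eqref{eq3.13}, leaving (up to constants) a nonlinear coupling term of the shape $\langle\mbf V_M^n+\mbf V_M^{n-1},\,\mbf P_M^{n-1}\cdot\mbf P_M^{n-1}+\mbf Q_M^{n-1}\cdot\mbf Q_M^{n-1}\rangle_h\Delta t$, one term linear in $\Delta B_2^n$ with an $\mathcal F_{t_{n-1}}$-measurable coefficient, one term carrying $(\Delta B_2^n)^2\|\boldsymbol{\eta}_2\|_h^2$, and cross terms built from the modified increments $C_1\mbf P_M^{n-1}\cdot\boldsymbol{\eta}_1\Delta B_1^n$, $C_1\mbf Q_M^{n-1}\cdot\boldsymbol{\eta}_1\Delta B_0^n$ and the explicit $(\Delta t)^2C_1^2\boldsymbol{\eta}_1\cdot\boldsymbol{\eta}_1$ piece of \eqref{CEFD5}.

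Next I would compute the increments of $\langle\mbf U_M^n,\mbf P_M^n\cdot\mbf P_M^n\rangle_h$, $\langle\mbf U_M^n,\mbf Q_M^n\cdot\mbf Q_M^n\rangle_h$, $\langle\mbf P_M^n,\mbf A\mbf P_M^n\rangle_h$ and $\langle\mbf Q_M^n,\mbf A\mbf Q_M^n\rangle_h$ from \eqref{CEFD3}--\eqref{CEFD6}. Here the averaged placement $\mbf A(\mbf Q_M^n+\mbf Q_M^{n-1})/2$ and the new-level factor $\mbf U_M^n$ in $\mbf U_M^n\cdot(\mbf Q_M^n+\mbf Q_M^{n-1})$ are essential: after using the symmetry of $\mbf A$ (discrete summation by parts) and the commutativity of the componentwise product $\mbf x\cdot\mbf y=\mbf y\cdot\mbf x$, the discrete analogues of the deterministic terms $\langle u,2p(q_{xx}+uq)\rangle$, $\langle 2p_x,q_{xxx}+\cdots\rangle$ and their $p\leftrightarrow q$ counterparts cancel against one another — the discrete version of the cancellation of the $\langle\nabla\varphi,\boldsymbol{\mathrm{i}}\nabla u\varphi\rangle$ terms in the proof of Theorem~\ref{thm_energy}. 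Assembling $\mathcal H(\mbf P_M^n,\dots,\mbf V_M^n)-\mathcal H(\mbf P_M^{n-1},\dots,\mbf V_M^{n-1})$, the nonlinear coupling term from the previous paragraph cancels, and what survives is a sum of stochastic integrands (each a single increment $\Delta B_j^n$ times an $\mathcal F_{t_{n-1}}$-measurable factor) together with quadratic-in-increment terms proportional to $C_1^2(\Delta B_0^n)^2$ and $C_1^2(\Delta B_1^n)^2$ multiplying $\widetilde{\mathcal Q}_1=\langle\boldsymbol{\eta}_1,\mbf A\boldsymbol{\eta}_1\rangle_h$ and $\langle\mbf U_M^{n-1},\boldsymbol{\eta}_1\cdot\boldsymbol{\eta}_1\rangle_h$, a term $-C_2^2(\Delta B_2^n)^2\mathcal Q_2$, and the remaining explicit $(\Delta t)^2$-pieces.

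Finally I would take the expectation: the linear-in-increment terms vanish since $\mathbb E[\Delta B_j^n\,X]=0$ for $\mathcal F_{t_{n-1}}$-measurable $X$; each quadratic-in-increment term contributes a factor $\Delta t$ because $\mathbb E[(\Delta B_j^n)^2]=\Delta t$ and $B_0,B_1,B_2$ are independent; and these combine with the deterministic $(\Delta t)^2$-pieces so that one step contributes exactly $-C_2^2\mathcal Q_2\Delta t+4C_1^2\widetilde{\mathcal Q}_1\Delta t+4C_1^2\mathbb E[\langle\mbf U_M^{n-1},\boldsymbol{\eta}_1\cdot\boldsymbol{\eta}_1\rangle_h]\Delta t$. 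Summing over $n$ and telescoping then gives the claimed identity, with $\sum_{i=0}^{n-1}\Delta t=t_n$ and $\sum_{i=0}^{n-1}\mathbb E[\langle\mbf U_M^i,\boldsymbol{\eta}_1\cdot\boldsymbol{\eta}_1\rangle_h]\Delta t$ exactly as in the statement. I expect the main obstacle to be the algebraic bookkeeping of the middle step: one must verify that, with the specific averaged and new-level placements chosen in \eqref{CEFD3}--\eqref{CEFD6}, every interaction cross term cancels pathwise, and that after taking the expectation the various $\boldsymbol{\eta}_1$-dependent contributions — in particular the $\mbf U_M^n\cdot\boldsymbol{\eta}_1$ pieces hidden in the modified terms of \eqref{CEFD3}--\eqref{CEFD4} together with the ad hoc deterministic term $(\Delta t)^2C_1^2\boldsymbol{\eta}_1\cdot\boldsymbol{\eta}_1$ in \eqref{CEFD5} (which effectively substitutes a mean for a squared Brownian increment) — collapse to precisely $4C_1^2\langle\mbf U_M^{n-1},\boldsymbol{\eta}_1\cdot\boldsymbol{\eta}_1\rangle_h\Delta t$ evaluated at the old time level, with no residual term.
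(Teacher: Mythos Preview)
Your plan is essentially the paper's: the key device you mention in passing --- splitting one step of \eqref{CEFD} into a noise-injection substep producing intermediate states $\overline{\mbf P}_M^1,\overline{\mbf Q}_M^1,\overline{\mbf U}_M^1,\overline{\mbf V}_M^1$ followed by the deterministic-looking implicit step \eqref{CEFD-3} --- is exactly the backbone of the paper's proof, not merely a convenient reorganization. The paper computes $\mathbb{E}[\mathcal H(\overline{\mbf P}_M^1,\ldots)]-\mathbb{E}[\mathcal H(\mbf P_M^0,\ldots)]$ directly from the trivial substep (this yields precisely $-C_2^2\mathcal Q_2\Delta t+4C_1^2\widetilde{\mathcal Q}_1\Delta t+4C_1^2\mathbb{E}[\langle\mbf U_M^0,\boldsymbol\eta_1\cdot\boldsymbol\eta_1\rangle_h]\Delta t$), and then shows that the second substep preserves $\mathcal H$ \emph{pathwise}, i.e.\ $\mathcal H(\mbf P_M^1,\ldots)=\mathcal H(\overline{\mbf P}_M^1,\ldots)$ without any expectation. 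Organizing the argument this way dissolves the ``main obstacle'' you anticipate: there are no mixed stochastic--deterministic cross terms to track, because the stochastic increments have all been absorbed into the intermediate variables before the algebraic cancellations begin, and the verification that the $\mbf U_M^n\cdot\boldsymbol\eta_1$ modifications and the $(\Delta t)^2C_1^2\boldsymbol\eta_1\cdot\boldsymbol\eta_1$ term collapse correctly becomes automatic. Your direct one-pass computation from \eqref{CEFD3}--\eqref{CEFD6} would reach the same destination but with considerably heavier bookkeeping.
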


\begin{proof}
	By the one-step approximation \eqref{CEFD-3}, 
	\begin{align*}
	& \langle \overline{\mbf U}_M^1, \overline{\mbf U}_M^1 \rangle_h = \langle \mbf U_M^0, \mbf U_M^0 \rangle_h, \quad 
	\langle \overline{\mbf U}_M^1, \mbf A\overline{\mbf U}_M^1 \rangle_h = \langle \mbf U_M^0, \mbf A\mbf U_M^0 \rangle_h,\\
	&\langle \overline{\mbf V}_M^1, \overline{\mbf V}_M^1 \rangle_h = \langle \mbf V_M^0, \mbf V_M^0 \rangle_h + \langle \overline{\mbf V}_M^0, C_2\Delta \mbf W_2^1 \rangle_h + \frac{C_2^2}{4}\mathcal{Q}_2\Delta t,
	\\
	&\langle \overline{\mbf P}_M^1, \mbf A\overline{\mbf P}_M^1 \rangle_h = \langle \mbf P_M^0, \mbf A\mbf P_M^0 \rangle_h + 2\langle \mbf A\mbf P_M^0, C_1\Delta \mbf W_1^1 \rangle_h + C_1^2 \widetilde{\mathcal{Q}}_1 \Delta t,\\
	&\langle \overline{\mbf Q}_M^1, \mbf A\overline{\mbf Q}_M^1 \rangle_h = \langle \mbf Q_M^0,  \mbf A\mbf Q_M^0 \rangle_h - 2\langle \mbf A\mbf Q_M^0, C_1\Delta \mbf W_0^1 \rangle_h + C_1^2 \widetilde{\mathcal{Q}}_1 \Delta t,\\
	\nonumber
	&\langle \overline{\mbf U}_M^1,  \overline{\mbf P}_M^{1} \cdot \overline{\mbf P}_M^{1} + \overline{\mbf Q}_M^{1} \cdot \overline{\mbf Q}_M^{1}\rangle_h
	= \langle \mbf U_M^0, \mbf P_M^0 \cdot \mbf P_M^0 +\mbf Q_M^0 \cdot \mbf Q_M^0\rangle_h - 2\langle \mbf U^0_M, C_1\mbf Q_M^0\cdot\Delta \mbf W_0^1\rangle_h\\
	&\quad\quad\quad\quad+2\langle \mbf U_M^0, C_1\mbf P_M^0\cdot\Delta \mbf W_1^1 \rangle_h
	+C_1^2\langle \mbf U_M^0, \Delta{\bf W}_0^1 \cdot \Delta{\bf W}_0^1\rangle_h
	+C_1^2\langle \mbf U_M^0, \Delta{\bf W}_1^1 \cdot \Delta{\bf W}_1^1\rangle_h.
	\end{align*}
	Further we employ the definition of $\mathcal{H}$, take expectation, and then obtain
	\begin{align*}
	&\mathbb{E}[\mathcal{H}(\overline{\mbf P}_M^{1},\overline{\mbf Q}_M^{1},\overline{\mbf U}_M^{1},\overline{\mbf V}_M^{1})  ] \\
	=&\mathbb{E}[\mathcal{H}(\mbf P_M^0,\mbf Q_M^0,\mbf U_M^0,\mbf V_M^0)] -C_2^2\mathcal{Q}_2\Delta t + 4C_1^2 \widetilde{\mathcal{Q}}_1 \Delta t+ 4C_1^2\mathbb{E}[\langle  \mbf U_M^0, \boldsymbol{\eta}_1\cdot\boldsymbol{\eta}_1 \rangle_h]\Delta t .
	\end{align*}
	Moreover,  from \eqref{CEFD-3} it follows that
	\begin{align*}
	&\langle \mbf P_M^1-\overline{\mbf P}_M^1, \mbf A(\mbf P_M^1 +\overline{\mbf P}_M^1) \rangle_h = -\Delta t \Big\langle \mbf A \frac{\mbf Q_M^{1}+\overline{\mbf Q}_M^1}{2}+\frac{1}{2}\mbf U_M^{1} \cdot\left(\mbf Q_M^{1}+\overline{\mbf Q}_M^1\right), \mbf A(\mbf P_M^{1} +\overline{\mbf P}_M^1)\Big\rangle_h, \\
	&\langle \mbf Q_M^1-\overline{\mbf Q}_M^1, \mbf A(\mbf Q_M^1 +\overline{\mbf Q}_M^1) \rangle_h= \Delta t \Big\langle \mbf A \frac{\mbf P_M^{1}+\overline{\mbf P}_M^1}{2}+\frac{1}{2}\mbf U_M^{1} \cdot\left(\mbf P_M^{1}+\overline{\mbf P}_M^1\right), \mbf A(\mbf Q_M^{1} +\overline{\mbf Q}_M^1)\Big\rangle_h,\\
	&\mbf P_M^1\cdot  \mbf P_M^1+ \mbf Q_M^1\cdot  \mbf Q_M^1 = \overline{\mbf P}_M^1\cdot \overline{\mbf P}_M^1+\overline{\mbf Q}_M^1\cdot \overline{\mbf Q}_M^1-\Delta t \mbf A \frac{\mbf Q_M^{1}+\overline{\mbf Q}_M^1}{2} \cdot (\mbf P_M^1 +\overline{\mbf P}_M^1) \\
	&\qquad\qquad\qquad\qquad\qquad\quad+\Delta t \mbf A \frac{\mbf P_M^{1}+\overline{ \mbf P}_M^1}{2} \cdot (\mbf Q_M^1 +\overline{\mbf Q}_M^1),
	\end{align*}
	and 
	\begin{align*}
	&\langle \mbf U_M^1-\overline{\mbf U}_M^1, \mbf U_M^1 +\overline{\mbf U}_M^1 \rangle_h = \langle \Delta t(\mbf V_M^{1}+\overline{\mbf V}_M^1), \mbf U_M^1 +\overline{\mbf U}_M^1 \rangle_h,\\
	&\langle \mbf U_M^1-\overline{\mbf U}_M^1, \mbf A(\mbf U_M^1 +\overline{\mbf U}_M^1) \rangle_h = \langle \Delta t(\mbf V_M^{1}+\overline{\mbf V}_M^1), \mbf A(\mbf U_M^1 +\overline{\mbf U}_M^1) \rangle_h,\\
	&\langle \mbf V_M^1-\overline{\mbf V}_M^1, \mbf V_M^1 +\overline{\mbf V}_M^1 \rangle_h\\
	 =& \frac{\Delta t}{4}\Big\langle \mbf A (\mbf U_M^{1}+\overline{\mbf U}_M^1)-(\mbf U_M^{1}+\overline{\mbf U}_M^1)+\mbf P_M^{1} \cdot \mbf P_M^{1}+\mbf Q_M^{1} \cdot \mbf Q_M^{1}+\overline{\mbf P}_M^1 \cdot \overline{\mbf P}_M^1+\overline{\mbf Q}_M^1 \cdot \overline{\mbf Q}_M^1\\
	 &+\Delta t \mbf A \frac{\mbf Q_M^{1}+\overline{\mbf Q}_M^1}{2} \cdot (\mbf P_M^1 +\overline{\mbf P}_M^1)	-\Delta t \mbf A \frac{\mbf P_M^{1}+\overline{ \mbf P}_M^1}{2} \cdot (\mbf Q_M^1 +\overline{\mbf Q}_M^1), \mbf V_M^1 +\overline{\mbf V}_M^1\Big\rangle_h.
	\end{align*}
Then we obtain
\begin{align*}
	&2\langle \mbf U_M^1, \mbf P_M^{1} \cdot \mbf P_M^{1}+\mbf Q_M^{1} \cdot \mbf Q_M^{1}\rangle_h \\
	=&\langle\mbf U_M^1+\overline{\mbf U}_M^1, \overline{\mbf P}_M^1\cdot \overline{\mbf P}_M^1+\overline{\mbf Q}_M^1\cdot \overline{\mbf Q}_M^1-\Delta t \mbf A \frac{\mbf Q_M^{1}+\overline{\mbf Q}_M^1}{2} \cdot (\mbf P_M^1 +\overline{\mbf P}_M^1)+\Delta t \mbf A \frac{\mbf P_M^{1}+\overline{ \mbf P}_M^1}{2} \cdot (\mbf Q_M^1 +\overline{\mbf Q}_M^1)\rangle_h\\
	&+\langle\Delta t(\mbf V_M^1+\overline{\mbf V}_M^1), \mbf P_M^{1} \cdot \mbf P_M^{1}+\mbf Q_M^{1} \cdot \mbf Q_M^{1}\rangle_h\\
	=&\langle\mbf U_M^1+\overline{\mbf U}_M^1, \overline{\mbf P}_M^1\cdot \overline{\mbf P}_M^1+\overline{\mbf Q}_M^1\cdot \overline{\mbf Q}_M^1\rangle_h+\langle\Delta t(\mbf V_M^1+\overline{\mbf V}_M^1), \mbf P_M^{1} \cdot \mbf P_M^{1}+\mbf Q_M^{1} \cdot \mbf Q_M^{1}\rangle_h\\
	&-\langle\mbf U_M^1+\overline{\mbf U}_M^1,\Delta t \mbf A \frac{\mbf Q_M^{1}+\overline{\mbf Q}_M^1}{2} \cdot (\mbf P_M^1 +\overline{\mbf P}_M^1)\rangle_h+\langle\mbf U_M^1+\overline{\mbf U}_M^1,\Delta t \mbf A \frac{\mbf P_M^{1}+\overline{ \mbf P}_M^1}{2} \cdot (\mbf Q_M^1 +\overline{\mbf Q}_M^1)\rangle_h.
	\end{align*}
Since $\mbf U_M^1+\overline{\mbf U}_M^1=2\mbf U_M^1+\Delta t(\mbf V_M^{1}+\overline{\mbf V}_M^1),$
	\begin{align*}
	&2\langle \mbf U_M^1, \mbf P_M^{1} \cdot \mbf P_M^{1}+\mbf Q_M^{1} \cdot \mbf Q_M^{1}\rangle_h \\
		=&\langle 2\overline{\mbf U}_M^1,\overline{\mbf P}_M^1\cdot \overline{\mbf P}_M^1+\overline{\mbf Q}_M^1\cdot \overline{\mbf Q}_M^1 \rangle_h+ \langle\Delta t(\mbf V_M^1+\overline{\mbf V}_M^1), \overline{\mbf P}_M^1\cdot \overline{\mbf P}_M^1+\overline{\mbf Q}_M^1\cdot \overline{\mbf Q}_M^1 \rangle_h         \\
	&+\langle\Delta t(\mbf V_M^1+\overline{\mbf V}_M^1), \mbf P_M^{1} \cdot \mbf P_M^{1}+\mbf Q_M^{1} \cdot \mbf Q_M^{1}\rangle_h-\langle\mbf U_M^1+\overline{\mbf U}_M^1,\Delta t \mbf A \frac{\mbf Q_M^{1}+\overline{\mbf Q}_M^1}{2} \cdot (\mbf P_M^1 +\overline{\mbf P}_M^1)\rangle_h\\
	&+\langle\mbf U_M^1+\overline{\mbf U}_M^1,\Delta t \mbf A \frac{\mbf P_M^{1}+\overline{ \mbf P}_M^1}{2} \cdot (\mbf Q_M^1 +\overline{\mbf Q}_M^1)\rangle_h\\
	=& 2\langle \overline{\mbf U}_M^1, \overline{\mbf P}_M^1\cdot \overline{\mbf P}_M^1+\overline{\mbf Q}_M^1\cdot \overline{\mbf Q}_M^1 \rangle_h+\Delta t\langle \mbf V_M^1+\overline{\mbf V}_M^1, \mbf P_M^1\cdot \mbf P_M^1+ \mbf Q_M^1\cdot  \mbf Q_M^1 +\overline{\mbf P}_M^1\cdot \overline{\mbf P}_M^1+\overline{\mbf Q}_M^1\cdot \overline{\mbf Q}_M^1 \rangle_h\\
	& - \Big\langle \mbf U_M^1 +\overline{\mbf U}_M^1, \Delta t\mbf A \frac{\mbf Q_M^{1}+\overline{\mbf Q}_M^1}{2} \cdot (\mbf P_M^1 +\overline{\mbf P}_M^1)\Big\rangle_h
	+\Big\langle \mbf U_M^1 +\overline{\mbf U}_M^1, \Delta t\mbf A \frac{\mbf P_M^{1}+\overline{\mbf P}_M^1}{2} \cdot (\mbf Q_M^1 +\overline{\mbf Q}_M^1)\Big\rangle_h.
	\end{align*}
	Combining the above equations leads to
	\begin{equation}\label{eq5.14}
	\mathbb{E}[\mathcal{H}({\mbf P}_M^{1},{\mbf Q}_M^{1},{\mbf U}_M^{1},{\mbf V}_M^{1})  ]=\mathbb{E}[\mathcal{H}(\overline{\mbf P}_M^{1},\overline{\mbf Q}_M^{1},\overline{\mbf U}_M^{1},\overline{\mbf V}_M^{1})  ],
	\end{equation}
	which finishes the proof. 
\end{proof}

\begin{rem}
If we use the midpoint method, we obtain a fully-discrete scheme as follows
\begin{subequations}\label{MSP}
	\begin{align}
	\mbf P_M^{n} = &\mbf P_M^{n-1}-\Delta t\left( \mbf A \frac{\mbf Q_M^{n}+\mbf Q_M^{n-1}}{2}+\frac{1}{4}\left(\mbf U_M^{n}+\mbf U_M^{n-1}\right)\cdot\left(\mbf Q_M^{n}+\mbf Q_M^{n-1}\right)\right)+C_1\boldsymbol{\eta}_1\Delta B_1^n  \nonumber\\
	&+\frac{1}{2}\Delta t C_1\Delta B_0^n\left(
	\mbf A\boldsymbol{\eta}_1+\frac{\mbf U_M^n+\mbf U_M^{n-1}}{2}\cdot\boldsymbol{\eta}_1\right),\\
	\mbf Q_M^{n} = &\mbf Q_M^{n-1}+\Delta t\left( \mbf A \frac{\mbf P_M^{n}+\mbf P_M^{n-1}}{2}+\frac{1}{4}\left(\mbf U_M^{n}+\mbf U_M^{n-1}\right)\cdot\left(\mbf P_M^{n}+\mbf P_M^{n-1}\right)\right)-C_1\boldsymbol{\eta}_1\Delta B_0^n  \nonumber\\
	&+\frac{1}{2}\Delta t C_1\Delta B_1^n\left(
	\mbf A\boldsymbol{\eta}_1+\frac{\mbf U_M^n+\mbf U^{n-1}}{2}\cdot\boldsymbol{\eta}_1\right),\\
	\mbf V_M^{n} =&\mbf V_M^{n-1}+\frac{1}{2}\Delta t\left(  \mbf A \frac{\mbf U_M^{n}+\mbf U_M^{n-1}}{2}-\frac{\mbf U_M^{n}+\mbf U_M^{n-1}}{2}+\frac{1}{2}\left(\mbf P_M^{n-1} \cdot \mbf P_M^{n-1}+\mbf Q_M^{n-1} \cdot \mbf Q_M^{n-1}\right)\right)\nonumber\\
	&+\frac{1}{2}C_2\boldsymbol{\eta}_2\Delta B_2^n+\frac{1}{2}\Delta t\left(C_1\mbf P_M^{n-1}\cdot\boldsymbol{\eta}_1\Delta B_1^n-C_1\mbf Q_M^{n-1}\cdot\boldsymbol{\eta}_1\Delta B_0^n+\Delta tC_1^2\boldsymbol{\eta}_1\cdot\boldsymbol{\eta}_1\right), \\
	\mbf U_M^{n} =&\mbf U_M^{n-1}+\Delta t\left(\mbf V_M^{n}+\mbf V_M^{n-1}\right)+\frac{1}{2}\Delta t C_2\boldsymbol{\eta}_2\Delta B_2^n,
	\end{align}
\end{subequations}
which preserves both the averaged energy evolution law and averaged charge evolution law via similar arguments.
\end{rem}

 
When simulating various partial differential equations, sine pseudo-spectral methods play an important role due to their superior properties like high-order accuracy, good stability, and high efficiency (see \cite{Hesthaven,Thalhammer}). 
Now we adopt the sine pseudo-spectral method to approximate the stochastic KGS equation \eqref{SKGS} in space.  Concretely, we let $I_M$ be the trigonometric polynomial interpolation operator onto $\mathcal{S}_M :=\operatorname{span}\{\sin \left(\ell \mu (x-a)\right), $ $\ell=1,2, \ldots, M-1\}$,  with $ \mu=\frac{\pi}{b-a}$, i.e.,
$$
\left(I_M u\right)(x):=\sum_{\ell=1}^{M-1} \tilde{u}_{\ell} \sin \left({\ell}\mu(x-a)\right),~~
\tilde{u}_{\ell}:=\frac{2}{M} \sum_{i=1}^{M-1} u_i \sin \left({\ell}\mu\left(x_i-a\right)\right),
$$
where $u_i$ is interpreted as $u\left(x_i\right)$.  
Substituting $\tilde{u}_{\ell}$ into $I_M u$, we derive
\begin{equation}
\left(I_M u\right)(x)=\sum_{i=1}^{M-1} u_i X_i(x)
\end{equation}
with the interpolation basis function
\begin{equation}
X_i(x)=\frac{2}{M} \sum_{\ell=1}^{M-1} \sin \left({\ell}\mu(x_i-a)\right) \sin \left({\ell}\mu\left(x-a\right)\right).
\end{equation}
To obtain an approximation of the second-order derivative, we differentiate $X_i(x)$ twice, evaluate the resulting expressions of $X_i(x)$ at the collocation points $x_j$, and then derive the elements of  second-order differentiation operator $\mbf {\tilde{A}}$ as follows
\begin{equation}
\mbf {\tilde{A}}_{i, j}=\left\{\begin{aligned}
&(-1)^{i+j+1} \frac{\mu^2}{2}\left[\csc ^2\left(\frac{\mu}{2}(i-j) h\right)-\csc ^2\left(\frac{\mu}{2}(i+j) h\right)\right], & \text{if   } i \neq j, \\
&-\frac{\mu^2}{6}-\frac{M^2 \mu^2}{3}+\frac{\mu^2}{2} \csc ^2\left(i \mu h\right), & \text{if   } i=j.
\end{aligned}\right.
\end{equation}
Applying the sine pseudo-spectral method, we derive the semi-discrete scheme in matrix-vector form
\begin{equation}
\left\{
\begin{aligned}
&d\mbf Q_M(t) = \left(\mbf {\tilde{A}}\mbf P_M(t) +\mbf U_M(t)\cdot \mbf P_M(t)\right)dt - C_1\boldsymbol{\eta}_1dB_0(t), \\
&d\mbf P_M(t)= -\left(\mbf {\tilde{A}}\mbf Q_M(t) + \mbf U_M(t)\cdot \mbf Q_M(t)\right)dt + C_1\boldsymbol{\eta}_1dB_1(t), \\
&d\mbf V_M(t) = \frac{1}{2} \left(\mbf {\tilde{A}}\mbf U_M(t) - \mbf U_M(t) +\mbf P_M(t)\cdot \mbf P_M(t)+\mbf Q_M(t)\cdot \mbf Q_M(t)\right)dt + \frac{1}{2}C_2\boldsymbol{\eta}_2dB_2(t), \\
&d\mbf U_M(t) = 2 \mbf V_M(t) dt,
\end{aligned}
\right.
\end{equation}
Applying the modified techniques to time  discretization, we have the fully-discrete scheme
\begin{subequations}\label{CEFD-SPS}	
	\begin{align}
	\mbf P_M^{n} = &\mbf P_M^{n-1}-\Delta t\left( \mbf {\tilde{A}} \frac{\mbf Q_M^{n}+\mbf Q_M^{n-1}}{2}+\frac{1}{2}\mbf U_M^{n}\cdot\left(\mbf Q_M^{n}+\mbf Q_M^{n-1}\right)\right)+C_1\boldsymbol{\eta}_1\Delta B_1^n\nonumber\\
	&+\frac{1}{2}\Delta t C_1\Delta B_0^n\left(
	\mbf {\tilde{A}}\boldsymbol{\eta}_1+\mbf U_M^n\cdot\boldsymbol{\eta}_1\right),\\
	\mbf Q_M^{n} = &\mbf Q_M^{n-1}+\Delta t\left( \mbf {\tilde{A}} \frac{\mbf P_M^{n}+\mbf P_M^{n-1}}{2}+\frac{1}{2}\mbf U_M^{n}\cdot\left(\mbf P_M^{n}+\mbf P_M^{n-1}\right)\right)-C_1\boldsymbol{\eta}_1\Delta B_0^n\nonumber\\
	&+\frac{1}{2}\Delta t C_1\Delta B_1^n\left(
	\mbf{\tilde{A}}\boldsymbol{\eta}_1+\mbf U_M^n\cdot\boldsymbol{\eta}_1\right),\\	
	\mbf V_M^{n} =&\mbf V_M^{n-1}+\frac{1}{2}\Delta t\left(  \mbf {\tilde{A}} \frac{\mbf U_M^{n}+\mbf U_M^{n-1}}{2}-\frac{\mbf U_M^{n}+\mbf U_M^{n-1}}{2}+\mbf P_M^{n-1} \cdot \mbf P_M^{n-1}+\mbf Q_M^{n-1} \cdot \mbf Q_M^{n-1}\right)\nonumber\\
	&+\frac{1}{2}C_2\boldsymbol{\eta}_2\Delta B_2^n+\Delta t\left(C_1\mbf P_M^{n-1}\cdot\boldsymbol{\eta}_1\Delta B_1^n-C_1\mbf Q_M^{n-1}\cdot\boldsymbol{\eta}_1\Delta B_0^n+\Delta tC_1^2\boldsymbol{\eta}_1\cdot\boldsymbol{\eta}_1\right),\\
	\mbf U_M^{n} =&\mbf U_M^{n-1}+\Delta t\left(\mbf V_M^{n}+\mbf V_M^{n-1}\right)+\frac{1}{2}\Delta t C_2\boldsymbol{\eta}_2\Delta B_2^n,
	\end{align}
\end{subequations}
where $n\in\{1,\ldots,N\}.$  
Making use of similar procedures as in Theorems \ref{thm5.1} and \ref{thm5.2}, we have the following result. 

\begin{rem} 
	The fully-discrete scheme \eqref{CEFD-SPS} satisfies averaged charge evolution law
	\begin{equation*}
	\mathbb{E}[\mathcal{N}(\mbf P_M^n,\mbf Q_M^n)  ] = \mathbb{E}[\mathcal{N}(\mbf P_M^0,\mbf Q_M^0)]  + 2C_1^2\|\boldsymbol{\eta}_1\|_h^2t_n,
	\end{equation*}
	and averaged energy evolution law
	\begin{align*}
	&\mathbb{E}[ \overline{\mathcal{H}}(\mbf P_M^n,\mbf Q_M^n,\mbf U_M^n,\mbf V_M^n)  ] \\
	=&\mathbb{E}[ \overline{\mathcal{H}}(\mbf P_M^{0},\mbf Q_M^{0},\mbf U_M^{0},\mbf V_M^{0})] -C_2^2\mathcal{Q}_2t_n + 4C_1^2 \langle \boldsymbol{\eta}_1, \mbf {\tilde{A}}\boldsymbol{\eta}_1\rangle_h t_n+ 4C_1^2\sum_{i=0}^{n-1}\mathbb{E}[\langle  \mbf U_M^{i}, \boldsymbol{\eta}_1\cdot\boldsymbol{\eta}_1 \rangle_h]\Delta t,
	\end{align*}
	where $n\in\{0,1,\ldots,N\},$ 
	\begin{align*}
	&\overline{\mathcal{H}}(\mbf P_M^n,\mbf Q_M^n,\mbf U_M^n,\mbf V_M^n) \\
	=  &2\langle \mbf U_M^n, \mbf P_M^n\cdot \mbf P_M^n+ \mbf Q_M^n\cdot \mbf Q_M^n\rangle_h- \| \mbf U_M^n\|_h^2 - 4\| \mbf V_M^n\|_h^2\\
	&+ \langle \mbf U_M^n, \mbf {\tilde{A}}\mbf U_M^n\rangle_h+ 2  \langle \mbf P_M^n, \mbf {\tilde{A}}\mbf P_M^n\rangle_h +  2\langle \mbf Q_M^n, \mbf {\tilde{A}}\mbf Q_M^n\rangle_h.
	\end{align*}
\end{rem}

The finite element method, as a type of classic and mature numerical method, can deal with the irregular computational domain and have high flexibility (see \cite{MSF,LWX}).  
We now apply the finite element method to discretize \eqref{SKGS}.  First, we introduce some notations.
Let $\mathcal{T}_h$ be the uniform partition of $[a, b]$ with step size $h$ defined above, and denote $I_i = (x_i, x_{i+1}), i=0,1,2,\dots, M-1$. 
Set $\mathcal V_h$ to be the space of piecewise linear continuous functions with respect to $\mathcal{T}_h$ which vanish on the boundary of $[a, b]$. 
Multiplying $u_{xx}$ by $\zeta(x)$ and integrating by parts on each interval $I_i$ respectively, with $\zeta(x)$ being  functions in $\mathcal V_h$ for $i\in\{0,1,2,\dots, M-1\}$, we get
\begin{equation*}
\int_{x_i}^{x_{i+1}} u_{xx}(x)\zeta(x)(x)dx=-\int_{x_i}^{x_{i+1}} u_x(x) \zeta_x(x)dx.
\end{equation*}
Summing the above equations from $i=0$ to $M-1$, we obtain the following discrete bilinear form
\begin{equation}
\label{eq3.3}
B_h(u, \zeta) = -\sum_{i=0}^{M-1}\int_{x_i}^{x_{i+1}} u_x(x) \zeta_x(x)dx,
\end{equation}
which defines a discrete linear operator 
$A_h: \mathcal V_h\rightarrow \mathcal V_h$ as
\begin{equation*}
( A_h U_h, \zeta ) =B_h(U_h, \zeta) \quad \forall~ \zeta,U_h\in\mathcal V_h.
\end{equation*}
As a consequence, the finite element approximation of stochastic KGS equation \eqref{SKGS} can be regarded as: find $Q_h, P_h, V_h, U_h\in\mathcal V_h$ such that
\begin{equation}\label{FEM}
\left\{
\begin{aligned}
&dQ _h= \left(A_hP_h+\mathcal{P}_h U_hP_h\right)dt - C_1\mathcal{P}_h\eta_1dB_0(t), \\
&dP_h = -\left(A_hQ_h + \mathcal{P}_hU_hQ_h\right)dt +C_1\mathcal{P}_h\eta_1dB_1(t), \\
&dV_h = \frac{1}{2}\left( A_hU_h -  U_h + \mathcal{P}_h(P_h^2+Q_h^2) \right)dt + \frac{1}{2}C_2\mathcal{P}_h\eta_2dB_2(t), \\
&dU_h = 2 V_h dt, 
\end{aligned}
\right.
\end{equation}
where $\mathcal{P}_h: L^2([a, b])\rightarrow\mathcal V_h$ is the projection defined by $(\mathcal{P}_h\mu, \phi)=(\mu,\phi), ~\forall \phi \in \mathcal V_h$.
Further, we have 
\begin{align}
\label{CEFD-FEM}
P_h^n =& {P}_h^{n-1}-\Delta t\left( A_h \frac{Q_h^n+{Q}_h^{n-1}}{2}+\frac{1}{2}\mathcal{P}_hU_h^n\left(Q_h^n+{Q}_h^{n-1}\right)\right)+C_1\mathcal{P}_h\eta_1\Delta B_1^n\nonumber\\
&+\frac{1}{2}\Delta tC_1\left( A_h \mathcal{P}_h\eta_1\Delta B_0^n+\mathcal{P}_hU_h^n\mathcal{P}_h\eta_1\Delta B_0^n\right),\nonumber\\
Q_h^n =&{Q}_h^{n-1}+\Delta t\left(A_h \frac{P_h^n+{P}_h^{n-1}}{2}+\frac{1}{2}\mathcal{P}_h U_h^n\left(P_h^n+{P}_h^{n-1}\right)\right)- C_1\mathcal{P}_h\eta_1\Delta B_0^n\nonumber\\
&+\frac{1}{2}\Delta tC_1\left( A_h \mathcal{P}_h\eta_1\Delta B_1^n+\mathcal{P}_hU_h^n\mathcal{P}_h\eta_1\Delta B_1^n\right), \\
V_h^n =&{V}_h^{n-1}+\Delta t\left(\frac{1}{2} A_h \frac{U_h^n+{U}_h^{n-1}}{2}-\frac{1}{2}\frac{U_h^n+{U}_h^{n-1}}{2}+\frac{1}{2}\mathcal{P}_h\left(({P}_h^{n-1})^2 +({Q}_h^{n-1})^2\right)\right)\nonumber\\
&+\frac{1}{2}C_2\mathcal{P}_h\eta_2 \Delta B_2^n+\Delta t C_1\mathcal{P}_h\left({P}_h^{n-1}\mathcal{P}_h\eta_1\Delta B_1^n-{Q}_h^{n-1}\mathcal{P}_h\eta_1\Delta B_0^n+C_1\Delta t(\mathcal{P}_h\eta_1)^2\right),\nonumber\\
U_h^n =&{U}_h^{n-1}+\Delta t\left(V_h^n+{V}_h^{n-1}\right)+ \frac{1}{2}\Delta tC_2\mathcal{P}_h\eta_2 \Delta B_2^n.\nonumber
\end{align}
\begin{rem}
	The fully-discrete scheme \eqref{CEFD-FEM} preserves the averaged charge evolution law
	\begin{equation*}
	\mathbb{E}[\mathcal{\check N}(P_h^n,Q_h^n)] = \mathbb{E}[\mathcal{\check N}(P_h^{0},Q_h^{0})]  + 2C_1^2( \mathcal{P}_h\eta_1, \mathcal{P}_h\eta_1) t_n
	\end{equation*}
	with $\mathcal{\check N}(P_h^n,Q_h^n)= (P_h^n, P_h^n ) + ( Q_h^n, Q_h^n )$, 
	and averaged energy evolution law
	\begin{align*}
	&\mathbb{E}[\mathcal{\check H}( P_h^n, Q_h^n,U_h^n, V_h^n)  ]
	=\mathbb{E}[\mathcal{\check H}( P_h^{0}, Q_h^{0}, U_h^{0}, V_h^{0})] -C_2^2\mathcal{\check Q}_2t_n + 4C_1^2 \mathcal{\check Q}_3t_n+ 4C_1^2\sum_{i=0}^{n-1}\mathbb{E}[( U_h^{i}, (\mathcal{P}_h \eta_1)^2 )]\Delta t,
	\end{align*}
	where $n\in\{0,1,\ldots,N\},$ 
	\begin{align*}
	\mathcal{\check H}(P_h^n,Q_h^n, U_h^n, V_h^n) =  &2( U_h^n, \mathcal{P}_h (P_h^n)^2+ \mathcal{P}_h(Q_h^n)^2 ) - ( U_h^n, U_h^n) -4(V_h^n, V_h^n) \\
	&+ (U_h^n, A_h U_h^n) + 2 (P_h^n, A_hP_h^n) +  2(Q_h^n, A_hQ_h^n),
	\end{align*}
	and $\mathcal{\check Q}_2 =(\mathcal{P}_h\eta_2, \mathcal{P}_h\eta_2), \mathcal{\check Q}_3 = (\mathcal{P}_h\eta_1, A_h\mathcal{P}_h\eta_1)$.	
\end{rem}


\section{Symplectic and multi-symplectic method}

For a stochastic Hamiltonian system, symplectic methods are shown to be superior to nonsymplectic ones especially in long time computation, owing to their preservation of the symplecticity of the underlying continuous differential equation system \cite{Anton, Song}. 
In this section, we present symplectic and multi-symplectic methods for \eqref{KGS}.

Runge--Kutta methods, as a class of efficient derivative-free numerical methods, are important tools for the treatment of stochastic Hamiltonian systems. 
Beneath the complexity and variety, all Runge--Kutta methods have a common form that can be summarized by a matrix and two vectors. 
In detail, 
for $s$-stage Runge–Kutta methods with $s\ge1$, the corresponding Butcher chart reads
 \begin{equation*}
\label{SRK}
\begin{array}{c|cccc}
c_{1} &  a_{11} &\cdots &a_{1s}\\
\vdots&  \vdots &\ddots &\vdots\\
c_{s} & a_{s1} &\cdots& a_{ss}\\
\hline
& b_{1} &\cdots  &b_{s}
\end{array}.
\end{equation*}
By exploiting the symplectic Runge-Kutta method, we get a temporal discretization for \eqref{KGS} as follows
\begin{subequations}\label{SRK}
	\begin{align*}
	&Q^{n,m} = Q^{n-1}+  \sum_{l=1}^{s}a_{ml}\left( \frac{1}{2} P_{xx}^{n,l}\Delta t +U^{n,l}P^{n,l}\Delta t -C_1\eta_1\Delta B_0^{n}\right), \\
	&Q^n = Q^{n-1} +  \sum_{m=1}^{s}b_{m}\left( \frac{1}{2} P_{xx}^{n,m}\Delta t +U^{n,m}P^{n,m}\Delta t -C_1\eta_1\Delta B_0^{n}\right), \\
	& P^{n,m}  = P^{n-1} -  \sum_{l=1}^{s}a_{ml}\left( \frac{1}{2} Q_{xx}^{n,l}  \Delta t+U^{n,l}Q^{n,l}  \Delta t - C_1\eta_1\Delta B_1^{n}\right), \\
	&P^n = P^{n-1} -  \sum_{m=1}^{s}b_{m}\left( \frac{1}{2} Q_{xx}^{n,m} \Delta t+U^{n,m} Q^{n,m} \Delta t - C_1\eta_1\Delta B_1^{n}\right), \\
	&V^{n,m} = V^{n-1} + \frac{1}{2} \sum_{l=1}^{s}a_{ml}\left( U_{xx}^{n,l}\Delta t - U^{n,l}\Delta t  +(P^{n,l})^2\Delta t +(Q^{n,l})^2\Delta t +C_2\eta_2\Delta B_2^{n}\right), \\
	&V^n= V^{n-1} + \frac{1}{2} \sum_{m=1}^{s}b_{m}\left( U_{xx}^{n,m}\Delta t- U^{n,m}\Delta t  +(P^{n,m})^2\Delta t  +(Q^{n,m})^2\Delta t +C_2\eta_2\Delta B_2^{n}\right), \\
	&U^{n,m} = U^{n-1} +  2\Delta t \sum_{l=1}^{s}a_{ml}V^{n,l}, ~~~U^n = U^{n-1}+ 2\Delta t \sum_{m=1}^{s}b_{m}V^{n,m},
	\end{align*}
\end{subequations}
where $n\in\{1,\ldots,N\},$  
\begin{equation}
\label{SC}
a_{ij}b_j+a_{ji}b_i=b_ib_j,\qquad i,j=1,\ldots,s.
\end{equation}
Here, $Q^n,P^n,V^n,U^n$ are approximations of $q(t_n),$ $p(t_n),$ $v(t_n),$ $u(t_n),$  respectively, $Q^{n,m},$ $P^{n,m},$ $V^{n,m},$ $U^{n,m}$ stand for the approximation of $q(t_{n,m}),$ $p(t_{n,m}),$  $v(t_{n,m}),$ $u(t_{n,m})$ which are the $m$th middle-value for $t_{n,m}\in (t_{n-1},t_n)$, 
and $Q_{xx}^{n,m},P_{xx}^{n,m},U_{xx}^{n,m}$  represent 
$\frac {\partial^2q}{\partial x^2}(t_{n,m})$, $\frac {\partial^2p}{\partial x^2}(t_{n,m}),$ $\frac {\partial^2u}{\partial x^2}(t_{n,m}),$ respectively.

\begin{ex}
	Let $s=2$, $a_{11}=\frac 14,$ $a_{21}=\frac{1}{4}+\frac{\sqrt{3}}{6}+\alpha$, $a_{12}=\frac{1}{4}-\frac{\sqrt{3}}{6}-\alpha,$ $a_{22}=\frac 14$, $b_1=b_2=\frac 12$ with $\alpha\in\mathbb R$. 
	Exploiting the parametric Runge--Kutta method to \eqref{KGS}, we obtain 
	\begin{subequations}
		\begin{align*}	
		Q^{n,1} = &Q^{n-1}+  \frac{1}{4}\left( \frac{1}{2} P_{xx}^{n,1}\Delta t +U^{n,1}P^{n,1}\Delta t -C_1\eta_1\Delta B_0^{n}\right)\\
		&+  \left(\frac{1}{4}-\frac{\sqrt{3}}{6}-\alpha\right)\left( \frac{1}{2} P_{xx}^{n,2}\Delta t +U^{n,2}P^{n,2}\Delta t -C_1\eta_1\Delta B_0^{n}\right),\\
		Q^{n,2} = &Q^{n-1}+   \left(\frac{1}{4}+\frac{\sqrt{3}}{6}+\alpha\right)\left( \frac{1}{2} P_{xx}^{n,1}\Delta t +U^{n,1}P^{n,1}\Delta t -C_1\eta_1\Delta B_0^{n}\right)\\
		&+ \frac{1}{4}\left( \frac{1}{2} P_{xx}^{n,2}\Delta t +U^{n,2}P^{n,2}\Delta t -C_1\eta_1\Delta B_0^{n}\right),\\	
		Q^n = &Q^{n-1} +  \frac{1}{2}\left( \frac{1}{2} P_{xx}^{n,1}\Delta t +U^{n,1}P^{n,1}\Delta t -C_1\eta_1\Delta B_0^{n}\right)+ \\
		& \frac{1}{2}\left( \frac{1}{2} P_{xx}^{n,2}\Delta t +U^{n,2}P^{n,2}\Delta t -C_1\eta_1\Delta B_0^{n}\right),\\	
		P^{n,1}  = &P^{n-1} -  \frac{1}{4}\left( \frac{1}{2} Q_{xx}^{n,1}  \Delta t+U^{n,1}Q^{n,1}  \Delta t- C_1\eta_1\Delta B_1^{n}\right),\\
		&- \left(\frac{1}{4}-\frac{\sqrt{3}}{6}-\alpha\right)\left( \frac{1}{2} Q_{xx}^{n,2}  \Delta t+U^{n,2}Q^{n,2}  \Delta t- C_1\eta_1\Delta B_1^{n}\right)\\
		P^{n,2}  = &P^{n-1} -  \left(\frac{1}{4}+\frac{\sqrt{3}}{6}+\alpha\right)\left( \frac{1}{2} Q_{xx}^{n,1}  \Delta t+U^{n,1}Q^{n,1}  \Delta t- C_1\eta_1\Delta B_1^{n}\right), \nonumber\\
		&- \frac{1}{4}\left( \frac{1}{2} Q_{xx}^{n,2}  \Delta t+U^{n,2}Q^{n,2}  \Delta t- C_1\eta_1\Delta B_1^{n}\right)\\
		P^n =&P^{n-1} -  \frac{1}{2}\left( \frac{1}{2} Q_{xx}^{n,1} \Delta t+U^{n,1} Q^{n,1} \Delta t -C_1\eta_1\Delta B_1^{n}\right)\\
		&-\frac{1}{2}\left( \frac{1}{2} Q_{xx}^{n,2} \Delta t+U^{n,2} Q^{n,2} \Delta t -C_1\eta_1\Delta B_1^{n}\right), \\
		V^{n,1} =& V^{n-1} +  \frac{1}{8}\left( U_{xx}^{n,1}\Delta t - U^{n,1}\Delta t  +(P^{n,1})^2\Delta t +(Q^{n,1})^2\Delta t +C_2\eta_2\Delta B_2^{n}\right)\\	
		&+  \left(\frac{1}{8}-\frac{\sqrt{3}}{12}-\frac \alpha 2\right)\left( U_{xx}^{n,2}\Delta t - U^{n,2}\Delta t  +(P^{n,2})^2\Delta t +(Q^{n,2})^2\Delta t +C_2\eta_2\Delta B_2^{n}\right)\\
		V^{n,2} =& V^{n-1} + \left(\frac{1}{8}+\frac{\sqrt{3}}{12}+\frac \alpha 2\right) \left( U_{xx}^{n,1}\Delta t - U^{n,1}\Delta t  +(P^{n,1})^2\Delta t +(Q^{n,1})^2\Delta t +C_2\eta_2\Delta B_2^{n}\right)\\	
		&+ \frac{1}{8}\left( U_{xx}^{n,2}\Delta t - U^{n,2}\Delta t  +(P^{n,2})^2\Delta t +(Q^{n,2})^2\Delta t +C_2\eta_2\Delta B_2^{n}\right),\\
		V^n=&V^{n-1} + \frac{1}{4} \left( U_{xx}^{n,1}\Delta t- U^{n,1}\Delta t  +(P^{n,1})^2\Delta t  +(Q^{n,1})^2\Delta t +C_2\eta_2\Delta B_2^{n}\right)\\
		&+\frac{1}{4} \left( U_{xx}^{n,2}\Delta t- U^{n,2}\Delta t  +(P^{n,2})^2\Delta t  +(Q^{n,2})^2\Delta t +C_2\eta_2\Delta B_2^{n}\right),\\
		U^{n,1} = &U^{n-1} +  \frac{1}{2}  \Delta t V^{n,1}+  \left(\frac{1}{2} -\frac{\sqrt{3}}{3} -2\alpha\right)\Delta t V^{n,2}, \\
		U^{n,1} =& U^{n-1} +   \left(\frac{1}{2} +\frac{\sqrt{3}}{3} +2\alpha\right) \Delta t V^{n,1}+ \frac{1}{2} \Delta t V^{n,2}, \\
		U^n =&U^{n-1}+ \Delta t V^{n,1}+\Delta t V^{n,2}.
		\end{align*}
	\end{subequations}
	When $\alpha=0$, the corresponding parametric Runge-Kutta method becomes the traditional Legendre-Gauss collocation method. 
	
	When $s>2,$ the family of parametric Runge--Kutta methods can be defined by the following tableau
	\begin{equation}
	\label{RK}
	\begin{array}{c|c}
	c_{1} &   \\
	\vdots&   \mathcal{A}(\alpha)=l X_s (\alpha)l^{-1} \\
	c_{s} & \\
	\hline
	& b_{1} \dots  b_{s}
	\end{array},
	\end{equation}
	where 
	\begin{align*}
	&l=\left[\begin{array}{cccc}
	l_1(c_1) & l_2(c_1) &\dots&l_s(c_1)\\
	l_1(c_2)  & l_2(c_2) &\dots&l_s(c_2)\\
	\vdots & \vdots &&\vdots\\
	l_1(c_s)  & l_2(c_s) &\dots&l_s(c_s)\\
	\end{array}\right]_{s\times s}, 
	\quad X_s (\alpha)=\left[\begin{array}{cccc}
	\frac{1}{2} & -(\xi_1+\alpha_1)&&\\
	\xi_1+\alpha_1 & 0 &\ddots&\\
	& \ddots &\ddots& -(\xi_{s-1}+\alpha_{s-1})\\
	&  &\xi_{s-1}+\alpha_{s-1}&0\\
	\end{array}\right]
	\end{align*} 
	with $\alpha_1, \cdots, \alpha_{s-1}\in\mathbb R$, $\xi_i = \frac{1}{2\sqrt{(2i+1)(2i-1)}},i= 1,\dots,s-1$, and $l_i(\tau)$ being the Legendre polynomials of 
	degree $i-1$ shifted and normalized in the interval $[0,1]$ for $i=1,\dots, s$ satisfying
	\begin{equation*}
	\int_0^1l_i(\tau)l_j(\tau)d\tau =\delta_{ij}, \quad i, j=1,\dots, s.
	\end{equation*}
	In this case, $c_1\leq c_2\leq \dots \leq c_s$ and $b_1, \dots, b_s$ are the abscissae and the weights of the Gauss--Legendre quadrature formula in the interval $[0,1]$, respectively. 
	For any value of $\alpha_1, \cdots, \alpha_{s-1}$, the corresponding parametric Runge--Kutta method defined by \eqref{RK}
	is symplectic, since
	\begin{equation*}
	\label{SC}
	B \mathcal{A}(\alpha) +  \mathcal{A}(\alpha)^\top B = bb^\top
	\end{equation*}
	holds, where $B= diag \{b_1,\dots, b_s\}$, $b=[b_1,\dots, b_s]^\top$. 
	Introducing parameters into Gauss collocation formulae leads to the numerical method which is symplectic and much greater degree of flexibility. 
\end{ex}

In the following, we prove that the semi-discrete scheme \eqref{SRK}  preserves the discrete symplectic conservation law almost surely.
\begin{thm}
The temporal discretization \eqref{SRK} admits the discrete symplectic conservation law, i.e., 
\begin{align*}
\mathrm{~d} Q^1 \wedge \mathrm{d} P^1 + \mathrm{d} V^1 \wedge \mathrm{d} U^1
=\mathrm{~d} Q^0 \wedge \mathrm{d} P^0 + \mathrm{d} V^0 \wedge \mathrm{d} U^0,  \quad a.s. .
\end{align*}
\end{thm}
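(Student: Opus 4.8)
The plan is to carry over to this additively forced, infinite-dimensional setting the classical argument that a Runge--Kutta method obeying \eqref{SC} preserves the symplectic two-form. We read the identity in the spatially-integrated sense, i.e.\ as $\bar\omega^{1}=\bar\omega^{0}$ with $\bar\omega^{n}:=\int_{a}^{b}\big(\mathrm{d}Q^{n}\wedge\mathrm{d}P^{n}+\mathrm{d}V^{n}\wedge\mathrm{d}U^{n}\big)\,\mathrm{d}x$, in keeping with the definition of $\bar\omega$; the spatial integration is precisely what lets the second-order terms be discarded by parts. It suffices to treat one step $n=1$ (all $t_n$ following by iteration). Fix a sample point outside a $\mathbb P$-null set so that $\Delta B_{0}^{1},\Delta B_{1}^{1},\Delta B_{2}^{1}$ are constants, and assume that the implicitly defined one-step map is well posed and differentiable in the initial data $(Q^{0},P^{0},U^{0},V^{0})$ (the analogue of the hypothesis made for the continuous flow). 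Abbreviating the stage increments in \eqref{SRK},
\begin{align*}
\widehat K_{j}^{Q}&:=\big(\tfrac12 P_{xx}^{1,j}+U^{1,j}P^{1,j}\big)\Delta t-C_{1}\eta_{1}\Delta B_{0}^{1},
&\widehat K_{j}^{P}&:=-\big(\tfrac12 Q_{xx}^{1,j}+U^{1,j}Q^{1,j}\big)\Delta t+C_{1}\eta_{1}\Delta B_{1}^{1},\\
\widehat K_{j}^{V}&:=\tfrac12\big(U_{xx}^{1,j}-U^{1,j}+(P^{1,j})^{2}+(Q^{1,j})^{2}\big)\Delta t+\tfrac12 C_{2}\eta_{2}\Delta B_{2}^{1},
&\widehat K_{j}^{U}&:=2V^{1,j}\Delta t,
\end{align*}
we have $Q^{1,m}=Q^{0}+\sum_{j}a_{mj}\widehat K_{j}^{Q}$, $Q^{1}=Q^{0}+\sum_{j}b_{j}\widehat K_{j}^{Q}$, and likewise for $P,U,V$. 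Since the Wiener increments do not depend on the initial data, $\mathrm{d}(\Delta B_{i}^{1})=0$, so the additive noise disappears from every $\mathrm{d}\widehat K_{j}^{\cdot}$ and only the drift parts — which carry the Hamiltonian structure of \eqref{KGS} — survive.

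Next I would expand $\mathrm{d}Q^{1}\wedge\mathrm{d}P^{1}$ and $\mathrm{d}V^{1}\wedge\mathrm{d}U^{1}$ using these relations, and then, in each term linear in a single $\widehat K$, replace the surviving factor $\mathrm{d}Q^{0}$ (resp.\ $\mathrm{d}P^{0},\mathrm{d}U^{0},\mathrm{d}V^{0}$) by $\mathrm{d}Q^{1,m}-\sum_{j}a_{mj}\mathrm{d}\widehat K_{j}^{Q}$ (resp.\ its analogue). Integrating over $x$, the outcome splits into three parts: the initial form $\bar\omega^{0}$; a stage sum $\sum_{m}b_{m}\mathcal B_{m}$ with
\begin{equation*}
\mathcal B_{m}:=\int_{a}^{b}\big(\mathrm{d}Q^{1,m}\wedge\mathrm{d}\widehat K_{m}^{P}+\mathrm{d}\widehat K_{m}^{Q}\wedge\mathrm{d}P^{1,m}+\mathrm{d}V^{1,m}\wedge\mathrm{d}\widehat K_{m}^{U}+\mathrm{d}\widehat K_{m}^{V}\wedge\mathrm{d}U^{1,m}\big)\,\mathrm{d}x;
\end{equation*}
and a part quadratic in the increments which, after the relabelling $m\leftrightarrow j$, reads
\begin{equation*}
-\sum_{m,j}\big(b_{m}a_{mj}+b_{j}a_{jm}-b_{m}b_{j}\big)\int_{a}^{b}\big(\mathrm{d}\widehat K_{m}^{Q}\wedge\mathrm{d}\widehat K_{j}^{P}+\mathrm{d}\widehat K_{m}^{V}\wedge\mathrm{d}\widehat K_{j}^{U}\big)\,\mathrm{d}x
\end{equation*}
and vanishes identically by the symplecticity condition \eqref{SC}. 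Thus the claim reduces to $\mathcal B_{m}=0$ for every stage index $m$.

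To verify $\mathcal B_{m}=0$ I would evaluate the four wedge products directly. Every contribution carrying a second-order spatial derivative — namely $\int_{a}^{b}(\mathrm{d}P^{1,m})_{xx}\wedge\mathrm{d}P^{1,m}\,\mathrm{d}x$, $\int_{a}^{b}\mathrm{d}Q^{1,m}\wedge(\mathrm{d}Q^{1,m})_{xx}\,\mathrm{d}x$ and $\int_{a}^{b}(\mathrm{d}U^{1,m})_{xx}\wedge\mathrm{d}U^{1,m}\,\mathrm{d}x$ — integrates by parts once with no boundary term (the stage values inherit the homogeneous Dirichlet condition, so $\mathrm{d}P^{1,m},\mathrm{d}Q^{1,m},\mathrm{d}U^{1,m}$ vanish at $x=a,b$), and what is left, a wedge of a one-form with itself, is zero; the terms $U^{1,m}\mathrm{d}P^{1,m}\wedge\mathrm{d}P^{1,m}$, $U^{1,m}\mathrm{d}Q^{1,m}\wedge\mathrm{d}Q^{1,m}$, $\mathrm{d}U^{1,m}\wedge\mathrm{d}U^{1,m}$ and $\mathrm{d}V^{1,m}\wedge\mathrm{d}\widehat K_{m}^{U}=2\Delta t\,\mathrm{d}V^{1,m}\wedge\mathrm{d}V^{1,m}$ vanish outright. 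What remains is exactly the Yukawa coupling: $\mathrm{d}\widehat K_{m}^{Q}\wedge\mathrm{d}P^{1,m}$ and $\mathrm{d}Q^{1,m}\wedge\mathrm{d}\widehat K_{m}^{P}$ together contribute $\Delta t\int_{a}^{b}\big(P^{1,m}\,\mathrm{d}U^{1,m}\wedge\mathrm{d}P^{1,m}+Q^{1,m}\,\mathrm{d}U^{1,m}\wedge\mathrm{d}Q^{1,m}\big)\,\mathrm{d}x$, while the $(P^{1,m})^{2}+(Q^{1,m})^{2}$ term of $\widehat K_{m}^{V}$ entering $\mathrm{d}\widehat K_{m}^{V}\wedge\mathrm{d}U^{1,m}$ contributes the negative of that same integral (using $\mathrm{d}P\wedge\mathrm{d}U=-\mathrm{d}U\wedge\mathrm{d}P$); the two cancel, so $\mathcal B_{m}=0$ and hence $\bar\omega^{1}=\bar\omega^{0}$ pathwise, i.e.\ almost surely. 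The step demanding the most care is precisely this last cancellation: one has to keep exact track of the signs produced by the anticommutativity of $\wedge$ when integrating the Laplacian terms by parts and when moving the scalars $P^{1,m},Q^{1,m}$ past the one-forms, and to see that the coupling $\tfrac12\langle u,|\varphi|^{2}\rangle$ of $\mathbb H$ enters the $(q,p)$- and the $(v,u)$-blocks with matching weights — the discrete reflection of the self-adjointness over $(a,b)$ of the variational Hessian of $\mathbb H$. The remaining ingredients, the vanishing of the noise under $\mathrm{d}$ and of the increment-quadratic terms via \eqref{SC}, are routine bookkeeping.
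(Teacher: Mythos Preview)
Your proposal is correct and follows essentially the same route as the paper: expand the output two-form, substitute the stage relations to trade $\mathrm{d}Q^{0},\mathrm{d}P^{0},\mathrm{d}U^{0},\mathrm{d}V^{0}$ for stage values, kill the increment-quadratic block via \eqref{SC}, and then verify that the remaining stage sum vanishes because the Laplacian pieces integrate by parts to self-wedges and the Yukawa coupling enters the $(Q,P)$- and $(V,U)$-blocks with opposite signs. Your explicit reading of the identity in the spatially integrated sense $\bar\omega^{1}=\bar\omega^{0}$ is in fact what the paper's final step (``combining the property of $\partial_{xx}$'') implicitly needs, so this is not a departure but a clarification.
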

\begin{proof}
By utilizing \eqref{SRK},  we obtain
\begin{equation}\begin{aligned} \label{eq4.1}
&\mathrm{d}Q^{1} \wedge \mathrm{d}P^{1} - \mathrm{d}Q^{0} \wedge \mathrm{d}P^{0}\\
=&  -\mathrm{d} Q^0 \wedge \Delta t \sum_{m=1}^{s}b_{m}\left( \frac{1}{2}\mathrm{d} Q_{xx}^{1,m}+Q^{1,m}\mathrm{d}U^{1,m} + U^{1,m}\mathrm{d}Q^{1,m} \right)\\
&+  \Delta t\sum_{m=1}^{s}b_{m}\left( \frac{1}{2} \mathrm{d}P_{xx}^{1,m} +P^{1,m}\mathrm{d}U^{1,m}  + U^{1,m}\mathrm{d}P^{1,m}\right) \wedge \mathrm{d} P^0 \\
&-  \Delta t^2 \sum_{k,m=1}^{s}b_{m}\left( \frac{1}{2} \mathrm{d}P_{xx}^{1,m}+P^{1,m}\mathrm{d}U^{1,m}  + U^{1,m}\mathrm{d}P^{1,m} \right) \wedge b_{k}\left( \frac{1}{2} \mathrm{d}Q_{xx}^{1,k}+Q^{1,k}\mathrm{d}U^{1,k} + U^{1,k}\mathrm{d}Q^{1,k} \right).
\end{aligned}
\end{equation}
From
\begin{align*}
&\mathrm{d}Q^0 = \mathrm{d}Q^{1,m}  - \Delta t \sum_{l=1}^{s}a_{ml}\left( \frac{1}{2} \mathrm{d}P_{xx}^{1,l} +P^{1,l} \mathrm{d}U^{1,l}  + U^{1,l} \mathrm{d}P^{1,l}  \right), \\
& \mathrm{d}P^0 =\mathrm{d}P^{1,m}  +  \Delta t \sum_{l=1}^{s}a_{ml}\left( \frac{1}{2} \mathrm{d}Q_{xx}^{1,l}  +Q^{1,l} \mathrm{d}U^{1,l}   + U^{1,l} \mathrm{d}Q^{1,l} \right)
\end{align*}
and symplectic condition \eqref{SC} it follows that
\begin{align}\label{eq4.2}
\mathrm{d}Q^{1} \wedge \mathrm{d}P^{1} =& \mathrm{d}Q^{0} \wedge \mathrm{d}P^{0}
- \mathrm{d}Q^{1,m} \wedge \Delta t\sum_{m=1}^{s}b_{m}\left( \frac{1}{2}\mathrm d Q_{xx}^{1,m}+Q^{1,m}\mathrm{d}U^{1,m}\right)\nonumber\\
&+ \Delta t\sum_{m=1}^{s}b_{m}\left(\frac{1}{2}\mathrm d P_{xx}^{1,m}+P^{1,m} \mathrm{d}U^{1,m}\right) \wedge \mathrm{d}P^{1,m}.
\end{align}
Similarly,
\begin{equation*}
\begin{aligned}
&\mathrm{d}V^{1} \wedge \mathrm{d}U^{1}
\\
=& \mathrm{d}V^{0} \wedge \mathrm{d}U^{0}
+\mathrm{d} V^0 \wedge 2\Delta t \sum_{m=1}^{s}b_{m}\mathrm{d}V^{1,m}\\
&+  \frac{1}{2} \Delta t\sum_{m=1}^{s}b_{m}\left( \mathrm{d}AU^{1,m}- \mathrm{d}U^{1,m}  +2P^{1,m}\mathrm{d}P^{1,m}  +2Q^{1,m}\mathrm{d}Q^{1,m} \right) \wedge \mathrm{d} U^0 \\
&+  \Delta t\sum_{m=1}^{s}b_{m}\left( \mathrm{d}AU^{1,m}- \mathrm{d}U^{1,m}  +2P^{1,m}\mathrm{d}P^{1,m} +2Q^{1,m}\mathrm{d}Q^{1,m}\right)  \wedge \Delta t \sum_{k=1}^{s}b_{k}\mathrm{d}V^{1,k}.
\end{aligned}\end{equation*}
Further, based on  symplectic condition \eqref{SC}, we obtain
\begin{equation}\label{eq4.3}
\mathrm{d}V^{1} \wedge \mathrm{d}U^{1} = \mathrm{d}V^{0} \wedge \mathrm{d}U^{0}
+  \Delta t\sum_{m=1}^{s}b_{m}\big(\frac{1}{2}\mathrm d U_{xx}^{1.m}+P^{1.m}\mathrm{d}P^{1.m} +Q^{1.m} \mathrm{d}Q^{1.m}\big)\wedge\mathrm{d}U^{1.m}.
\end{equation}
We combine \eqref{eq4.2} and \eqref{eq4.3} to get 
\begin{align*}
&\mathrm{~d} Q^1 \wedge \mathrm{d} P^1 + \mathrm{d} V^1 \wedge \mathrm{d} U^1-(\mathrm{~d} Q^0 \wedge \mathrm{d} P^0 + \mathrm{d} V^0 \wedge \mathrm{d} U^0)\\
=&\frac{1}{2}\Delta t \sum_{m=1}^{s}b_m\big(\mathrm dQ_{xx}^{1,m}\wedge \mathrm d Q^{1,m}+\mathrm dP_{xx}^{1,m}\wedge \mathrm d P^{1,m}+\mathrm dU_{xx}^{1,m}\wedge \mathrm d U^{1,m}\big).
\end{align*}
Combining the property of $\partial_{xx},$, we have $\mathrm dQ_{xx}^{1,m}\wedge \mathrm d Q^{1,m}+\mathrm dP_{xx}^{1,m}\wedge \mathrm d P^{1,m}+\mathrm dU_{xx}^{1,m}\wedge \mathrm d U^{1,m}=0,$
which completes the proof.
\end{proof}

Applying the finite difference to  \eqref{multi-symplectic} yields the following semi-discretization
\begin{align}\label{MSSD}
\begin{cases}
dP_i(t)= -\left(\delta_xG_i(t) + U_i(t)\cdot Q_i(t)\right)dt + C_1\eta_1(x_i)dB_1(t), \\
dQ_i(t) = \left( \delta_xF_i(t) +U_i(t)\cdot P_i(t)\right)dt - C_1\eta_1(x_i)dB_0(t), \\
\delta_xP_i(t) = F_i(t),\\
\delta_xQ_i(t) = G_i(t),\\
dU_i(t) =R_i(t)dt,\\
dR_i(t) = \left(\delta_xO_i(t) - U_i(t) +P_i(t)\cdot P_i(t)+Q_i(t)\cdot Q_i(t)\right)dt + C_2\eta_2(x_i)dB_2(t), \\
\delta_x U_i(t) = O_i(t),
\end{cases}
\end{align}
where $i=1,2,\dots,M-1.$ Here, $\delta_x$ is the numerical approximation of one-order spatial derivative, for instance,  $\displaystyle\delta_xG_i(t)=\frac{G_{i+1}(t)-G_{i}(t)}{h},$ 
$\displaystyle\delta_xF_i(t)=\frac{F_{i+1}(t)-F_{i}(t)}{h},$ 
$\displaystyle\delta_xO_i(t)=\frac{O_{i+1}(t)-O_{i}(t)}{h}.$ 
Then utilizing the symplectic Runge--Kutta method to \eqref{MSSD} leads to
\begin{subequations}\label{MSFD}
	\begin{align}
	\label{MSFD-1}
	& P_i^{n,m} = P_i^{n-1} -  \sum_{l=1}^{s}a_{ml}\left(  \delta_xG_i^{n,l}\Delta t  +U_i^nQ_i^{n,l}\Delta t  + C_1\eta_1(x_i)\Delta B_1^n)\right), \\
	\label{MSFD-2}
	&P_i^n = P_i^{n-1}- \sum_{m=1}^{s}b_{m}\left(  \delta_xG_i^{n,m}\Delta t +U_i^{n,m}Q_i^{n,m} \Delta t + C_1\eta_1(x_i)\Delta B_1^n\right), \\
	\label{MSFD-3}
	&Q_i^{n,m} = Q_i^{n-1}+  \sum_{l=1}^{s}a_{ml}\left(  \delta_xF_i^{n,l} \Delta t +U_i^{n,l} P_i^{n,l} \Delta t -C_1\eta_1(x_i)\Delta B_0^n\right), \\
	\label{MSFD-4}
	&Q_i^n = Q_i^{n-1} +  \sum_{m=1}^{s}b_{m}\left(  \delta_xF_i^{n,m}\Delta t +U_i^{n,m}P_i^{n,m}\Delta t -C_1\eta_1(x_i)\Delta B_0^n\right), \\
	\label{MSFD-5}
	&\delta_xP_i^n = F_i^n, ~ \delta_xQ_i^n = G_i^n, \\
	\label{MSFD-6}
	&U_i^{n,m} = U_i^{n-1} + \Delta t \sum_{l=1}^{s}a_{ml}R_i^{n,l}, ~~~U_i^{n}= U_i^{n-1} + \Delta t \sum_{m=1}^{s}b_{m}R_i^{n,m},\\
	\label{MSFD-7}
	&R_i^{n,m} = R_i^0 +  \sum_{l=1}^{s}a_{ml}\left(\delta_xO_i^{n,l}\Delta t - U_i^{n,l}\Delta t  +(P_i^{n,l})^2\Delta t +(Q_i^{n,l})^2\Delta t +C_2\eta_2(x_i)\Delta B_2^n\right), \\
	\label{MSFD-8}
	&R_i^n = R_i^0 +  \sum_{m=1}^{s}b_{m}\left( \delta_xO_i^{n,m}\Delta t- U_i^{n,m}\Delta t  +(P_i^{n,m})^2\Delta t  +(Q_i^{n,m})^2\Delta t +C_2\eta_2(x_i)\Delta B_2^n\right), \\
	\label{MSFD-9}
	&\delta_xU_i^n = O_i^n,
	\end{align}
\end{subequations}
where $i=1,\ldots,M-1,$ $m=1,\ldots,s,$ and $n=1,\ldots,N.$ 

\begin{thm} 
The fully-discrete scheme \eqref{MSFD} admits the following discrete multi-symplectic conservation law 
\begin{equation}\begin{aligned}
&\frac{1}{\Delta t}(2\mathrm{d} Q_i^1 \wedge \mathrm{d} P_i^1+ \mathrm{d} R_i^1 \wedge \mathrm{d} U_i^1 -2\mathrm{d} Q_i^0 \wedge \mathrm{d} P_i^0- \mathrm{d} R_i^0 \wedge \mathrm{d} U_i^0 )=0,\quad{a.s.}.
\end{aligned}
\end{equation}
\end{thm}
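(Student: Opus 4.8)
The plan is to repeat, grid point by grid point, the argument used for the discrete symplectic conservation law of the scheme \eqref{SRK}, but now keeping the spatial difference operator $\delta_x$ explicit throughout instead of integrating it away over the interval. First I would take the exterior derivative $\mathrm d$ with respect to the initial data of the full-step updates \eqref{MSFD-2}, \eqref{MSFD-4}, \eqref{MSFD-6}, \eqref{MSFD-8} and of the internal-stage equations \eqref{MSFD-1}, \eqref{MSFD-3}, \eqref{MSFD-7}. Because the additive increments $C_1\eta_1(x_i)\Delta B_0^n$, $C_1\eta_1(x_i)\Delta B_1^n$, $C_2\eta_2(x_i)\Delta B_2^n$ carry no dependence on the initial value, $\mathrm d$ annihilates them; this is exactly why the conservation law holds pathwise, with no residual stochastic term. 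Then I would expand $2\,\mathrm d Q_i^1\wedge\mathrm d P_i^1-2\,\mathrm d Q_i^0\wedge\mathrm d P_i^0$ and $\mathrm d R_i^1\wedge\mathrm d U_i^1-\mathrm d R_i^0\wedge\mathrm d U_i^0$ using the full-step updates, which produces (i) terms linear in $\Delta t$ carrying the internal-stage differentials, (ii) genuinely quadratic double sums $\Delta t^2\sum_{m,k}b_m b_k(\cdots)\wedge(\cdots)$, and (iii) mixed cross terms.

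The second step is to eliminate the quadratic and cross contributions exactly as in the preceding proof: substitute \eqref{MSFD-1}, \eqref{MSFD-3}, \eqref{MSFD-7} to rewrite $\mathrm d Q_i^0,\mathrm d P_i^0,\mathrm d U_i^0,\mathrm d R_i^0$ through the internal-stage differentials, and apply the symplectic condition \eqref{SC}, $a_{ml}b_m+a_{lm}b_l=b_m b_l$, which collapses the double sums. After this, only $\Delta t\sum_m b_m$ times the internal-stage wedge products survives. The nonlinear coupling contributions then cancel pairwise between the $(Q,P)$-block and the $(R,U)$-block: the pieces produced by $\mathrm d(U_i^{n,m}Q_i^{n,m})$, $\mathrm d(U_i^{n,m}P_i^{n,m})$, $\mathrm d((P_i^{n,m})^2)$, $\mathrm d((Q_i^{n,m})^2)$ reorganize into $Q_i^{n,m}\,\mathrm d Q_i^{n,m}\wedge\mathrm d U_i^{n,m}$ and $P_i^{n,m}\,\mathrm d P_i^{n,m}\wedge\mathrm d U_i^{n,m}$ with opposite signs and therefore cancel, while pieces such as $U_i^{n,m}\,\mathrm d Q_i^{n,m}\wedge\mathrm d Q_i^{n,m}$ vanish because a one-form wedged with itself is zero. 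This is the exact analogue of the cancellation of the $P^{1,m}\,\mathrm d P^{1,m}\wedge\mathrm d U^{1,m}$-type terms in the symplectic proof.

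What then remains is $\Delta t\sum_m b_m$ times the spatial-operator terms $\mathrm d(\delta_x F_i^{n,m})\wedge\mathrm d P_i^{n,m}$, $\mathrm d Q_i^{n,m}\wedge\mathrm d(\delta_x G_i^{n,m})$ and $\mathrm d(\delta_x O_i^{n,m})\wedge\mathrm d U_i^{n,m}$, with the coefficients dictated by \eqref{MSSD}. Here I would invoke the defining constraints $F_i^{n,m}=\delta_x P_i^{n,m}$, $G_i^{n,m}=\delta_x Q_i^{n,m}$, $O_i^{n,m}=\delta_x U_i^{n,m}$ from \eqref{MSFD-5} and \eqref{MSFD-9}, together with the discrete summation-by-parts identity for the forward difference, $\mathrm d(\delta_x a_i)\wedge\mathrm d b_i=\delta_x(\mathrm d a_i\wedge\mathrm d b_i)-\mathrm d a_{i+1}\wedge\delta_x(\mathrm d b_i)$; converting the remainders $\delta_x(\mathrm d b_i)=\mathrm d(\delta_x b_i)$ back into differentials of the auxiliary variables via the constraints, these three contributions should assemble into a single discrete spatial divergence $\delta_x$ applied to the discrete $L$-two-form $\kappa_i^{n,m}:=2\,\mathrm d P_i^{n,m}\wedge\mathrm d F_i^{n,m}+2\,\mathrm d Q_i^{n,m}\wedge\mathrm d G_i^{n,m}+\mathrm d U_i^{n,m}\wedge\mathrm d O_i^{n,m}$. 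Dividing by $\Delta t$ then yields the full local law
\[
\tfrac1{\Delta t}\bigl(2\,\mathrm d Q_i^1\wedge\mathrm d P_i^1+\mathrm d R_i^1\wedge\mathrm d U_i^1-2\,\mathrm d Q_i^0\wedge\mathrm d P_i^0-\mathrm d R_i^0\wedge\mathrm d U_i^0\bigr)+\tfrac1h\sum_{m=1}^s b_m\bigl(\kappa_{i+1}^{1,m}-\kappa_i^{1,m}\bigr)=0,\quad a.s.,
\]
which is the asserted discrete multi-symplectic conservation law.

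I expect the third step to be the main obstacle. Unlike the semi-discrete symplectic case, where the leftover $\mathrm d Q_{xx}\wedge\mathrm d Q+\mathrm d P_{xx}\wedge\mathrm d P+\mathrm d U_{xx}\wedge\mathrm d U$ was disposed of by a single integration by parts over the whole interval, here the analogous terms do not vanish pointwise, and one must verify that after using the auxiliary variables and the one-sided form of $\delta_x$ every piece generated by the discrete product rule either telescopes or is absorbed into the spatial divergence, leaving no uncontrolled remainder. A secondary, purely bookkeeping point is keeping the internal-stage indices of the nonlinear products $U_i^{n,m}Q_i^{n,m}$ and $U_i^{n,m}P_i^{n,m}$ consistent, so that the cancellation in the second step is exact.
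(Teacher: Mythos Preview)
Your first two steps coincide with the paper's argument: take exterior derivatives of the full-step updates, expand the wedge products, substitute the internal-stage relations, and use the symplectic condition \eqref{SC} to collapse the $\Delta t^2$ double sums, after which the nonlinear coupling contributions cancel between the $(Q,P)$-block and the $(R,U)$-block. The paper arrives at exactly the intermediate identity you describe at the end of your second step,
\[
\tfrac{1}{\Delta t}\bigl(2\,\mathrm d Q_i^1\wedge\mathrm d P_i^1+\mathrm d R_i^1\wedge\mathrm d U_i^1-2\,\mathrm d Q_i^0\wedge\mathrm d P_i^0-\mathrm d R_i^0\wedge\mathrm d U_i^0\bigr)
=\sum_{m=1}^{s} b_m\bigl(\mathrm d\delta_x F_i^m\wedge\mathrm d P_i^m+\mathrm d\delta_x G_i^m\wedge\mathrm d Q_i^m+\mathrm d\delta_x O_i^m\wedge\mathrm d U_i^m\bigr).
\]

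Where you and the paper part ways is the handling of this residual. The paper does \emph{not} carry out your third step: it never invokes the constraints \eqref{MSFD-5}, \eqref{MSFD-9} at the stage values, never applies a discrete product rule for $\delta_x$, and never assembles the remainder into a spatial flux. It simply declares the right-hand side above to be $0$ and stops. Your version instead organizes the residual into $\tfrac{1}{h}\sum_m b_m(\kappa_{i+1}^{1,m}-\kappa_i^{1,m})$ and concludes with a two-index law containing both a temporal and a spatial difference. That is the canonical form of a discrete multi-symplectic conservation law, but it is \emph{not} the equation displayed in the theorem: the paper's statement asserts that the temporal difference alone vanishes at each grid point $i$. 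So your proposal, as written, proves a different (and more standard) identity than the one claimed, while the paper's own final step is an unjustified assertion that the spatial-operator residual vanishes pointwise in $i$. One minor caveat on your side: the constraints \eqref{MSFD-5}, \eqref{MSFD-9} are only recorded at the full steps $n$, so invoking $F_i^{n,m}=\delta_x P_i^{n,m}$ at internal stages requires you to state explicitly that the auxiliary variables are defined this way at the stages as well.
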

\begin{proof}
According to the discrete variational equations of \eqref{MSFD-2} and \eqref{MSFD-4}, we obtain
\begin{align*}
&\mathrm{d}Q_{i}^{1} \wedge \mathrm{d}P_{i}^{1} - \mathrm{d}Q_{i}^{0} \wedge \mathrm{d}P_{i}^{0}\\
=&  -\mathrm{d} Q_i^0 \wedge \Delta t \sum_{m=1}^{s}b_{m}\left( \mathrm{d} \delta_xG_i^m+Q_i^m\mathrm{d}U_i^m + U_i^m\mathrm{d}Q_i^m \right)\\
&+\Delta t\sum_{m=1}^{s}b_{m}\left( \mathrm{d}\delta_xF_i^m +P_i^m\mathrm{d}U_i^m  + U_i^m\mathrm{d}P_i^m \right) \wedge \mathrm{d} P_i^0 \\
&-\Delta t \sum_{m=1}^{s}b_{m}\left(  \mathrm{d}\delta_xF_i^m+P_i^m\mathrm{d}U_i^m  + U_i^m\mathrm{d}P_i^m \right) \wedge \Delta t \sum_{m=1}^{s}b_{m}\left( \mathrm{d}\delta_xG_i^m+Q_i^m\mathrm{d}U_i^m + U_i^m\mathrm{d}Q_i^m \right).
\end{align*}
Applying \eqref{MSFD-1}, \eqref{MSFD-3} and the symplectic condition \eqref{SC} yields
\begin{equation}\begin{aligned}\label{eq4.4}
\mathrm{d}Q_{i}^{1} \wedge \mathrm{d}P_{i}^{1} =& \mathrm{d}Q_{i}^{0} \wedge \mathrm{d}P_{i}^{0}  -\mathrm{d} Q_i^m \wedge \Delta t \sum_{m=1}^{s}b_{m}\mathrm{d} \delta_xG_i^m 
-\mathrm{d} Q_i^m \wedge \Delta t \sum_{m=1}^{s}b_{m}Q_i^m \mathrm{d} U_i^m \\
&+ \Delta t\sum_{m=1}^{s}b_{m}\mathrm{d}\delta_xF_i^m \wedge \mathrm{d} P_i^m 
+ \Delta t\sum_{m=1}^{s}b_{m}P_i^m \mathrm{d}U_i^m\wedge \mathrm{d} P_i^m.
\end{aligned}\end{equation}
Analogously, we derive the following equation according to  \eqref{MSFD-6}, \eqref{MSFD-7}, \eqref{MSFD-8}
\begin{equation}\begin{aligned}\label{eq4.5}
\mathrm{d}R_{i}^{1} \wedge \mathrm{d}U_{i}^{1} =& \mathrm{d}R_{i}^{0} \wedge \mathrm{d}U_{i}^{0} 
+ \Delta t\sum_{m=1}^{s}b_{m}\mathrm{d}\delta_xO_i^m \wedge \mathrm{d} U_i^m 
+ 2\Delta t\sum_{m=1}^{s}b_{m}P_i^m \mathrm{d}P_i^m\wedge \mathrm{d} U_i^m\\
&+2\Delta t\sum_{m=1}^{s}b_{m}Q_i^m \mathrm{d}Q_i^m\wedge \mathrm{d} U_i^m.
\end{aligned}\end{equation}
Combining  \eqref{eq4.4} and \eqref{eq4.5}, we deduce
\begin{equation*}\begin{aligned}
&\frac{1}{\Delta t}(2\mathrm{d} Q_i^1 \wedge \mathrm{d} P_i^1+ \mathrm{d} R_i^1 \wedge \mathrm{d} U_i^1 -2\mathrm{d} Q_i^0 \wedge \mathrm{d} P_i^0- \mathrm{d} R_i^0 \wedge \mathrm{d} U_i^0 ) \\
=&\sum_{m=1}^{s}b_{m}( \mathrm{d}\delta_x F_i^m \wedge \mathrm{d} P_i^m + \mathrm{d}\delta_xG_i^m \wedge \mathrm{d} Q_i^m + \mathrm{d} \delta_x O_i^m \wedge \mathrm{d} U_i^m)dt=0,
\end{aligned} 
\end{equation*}
where implies the proof. 
\end{proof}

\section{Numerical experiments}
This section presents various numerical experiments to verify the properties of the proposed fully-discrete schemes for the 1-dimensional stochastic Klein-Gordon-Schr\"odinger equation with the homogeneous Dirichlet boundary condition. 
In all numerical experiments, the expectation is calculated by taking average over 2000 realizations.

\begin{figure}[h]
\centering
\includegraphics[height=4cm,width=4.5cm]{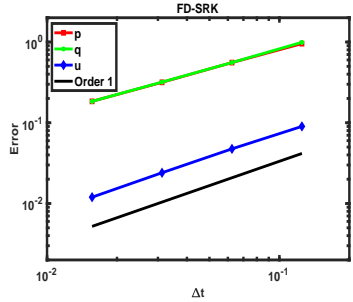}
\caption{Mean-square errors of FD-SRK in time with T=1.} 
\label{fig0}
\end{figure}

\begin{figure}[h]
	\centering
	\subfigure{
		\begin{minipage}{15cm}
			\centering
			\includegraphics[height=4cm,width=4.5cm]{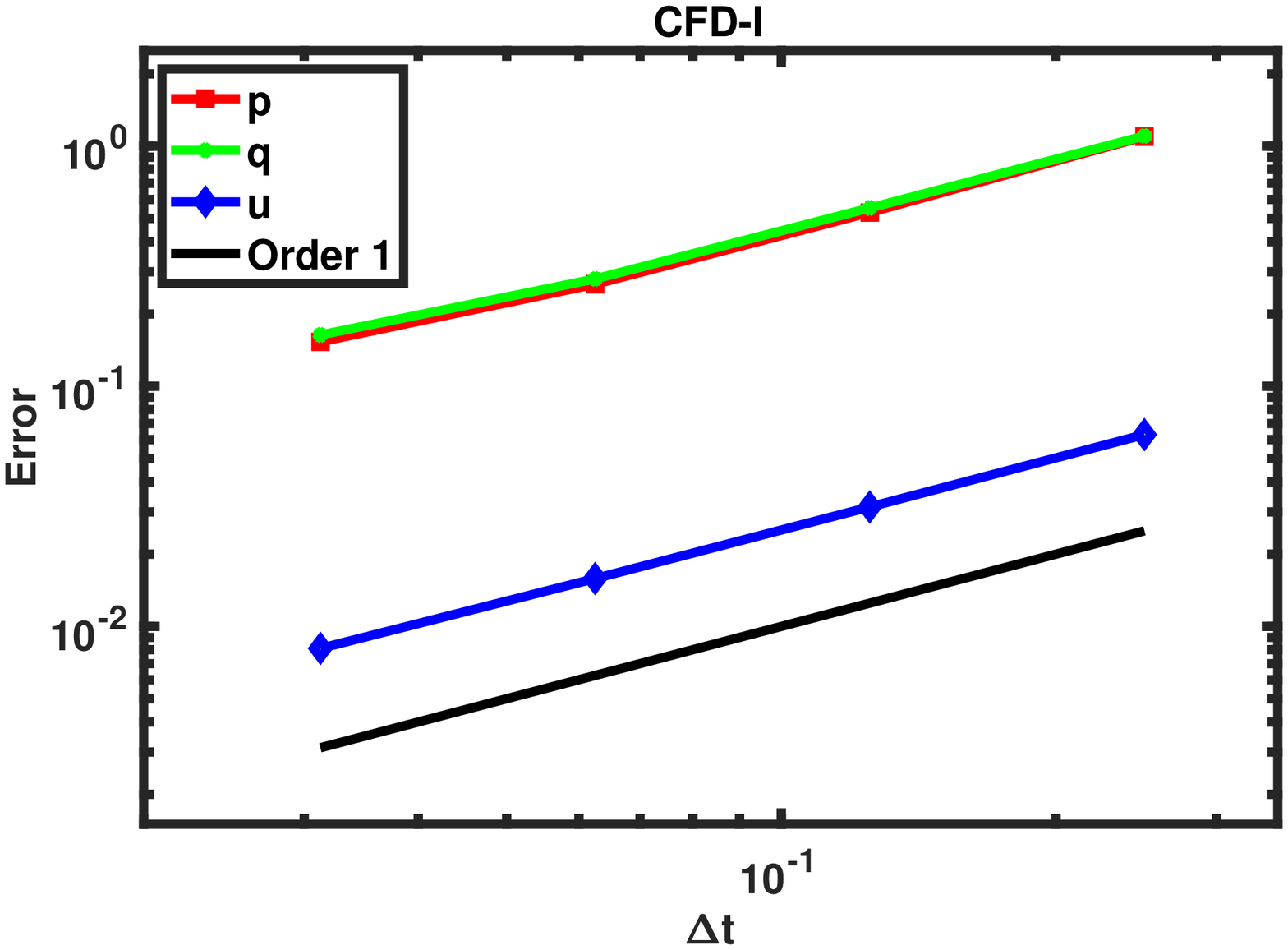}
			\includegraphics[height=4cm,width=4.5cm]{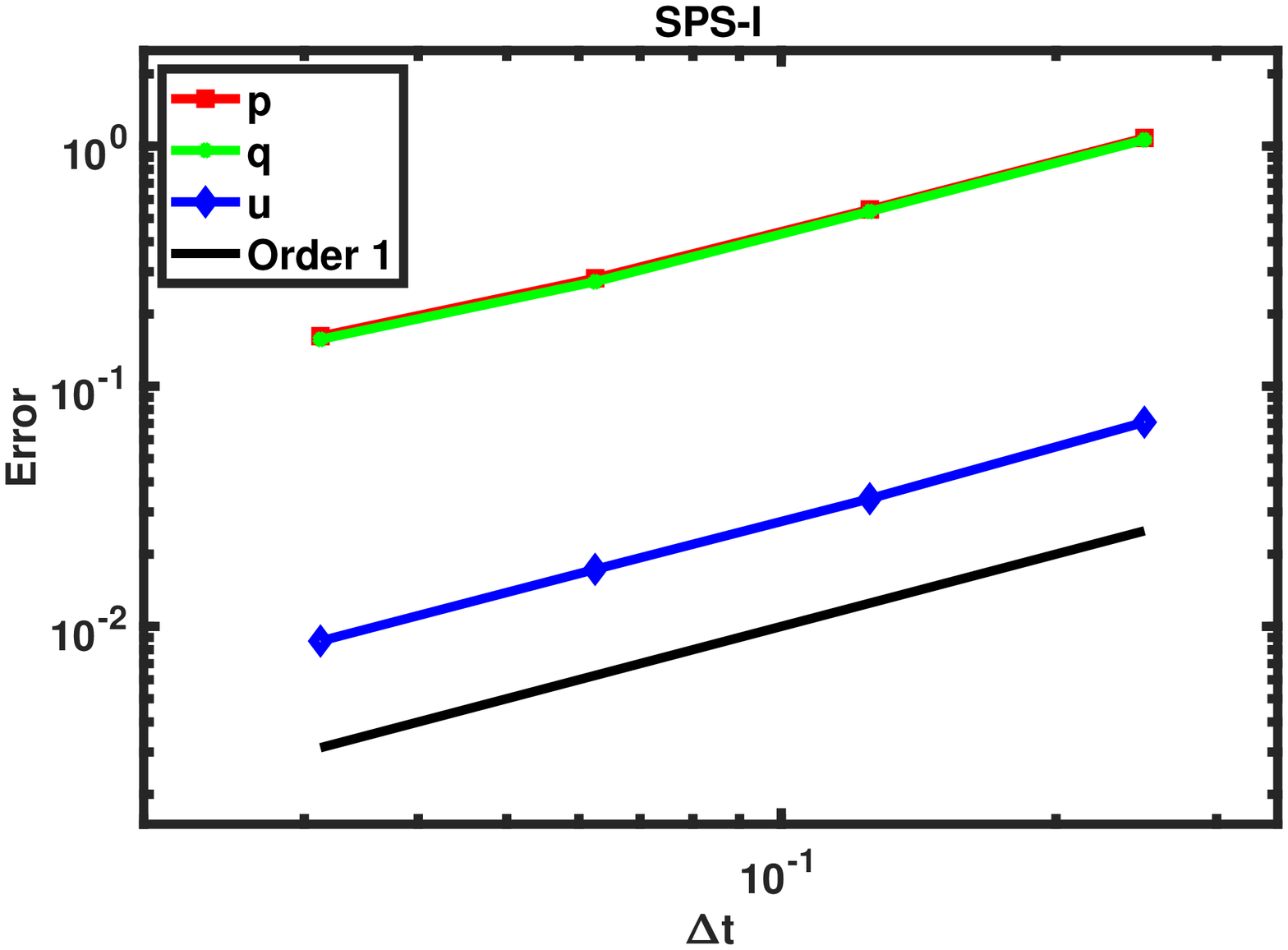}
			\includegraphics[height=4cm,width=4.5cm]{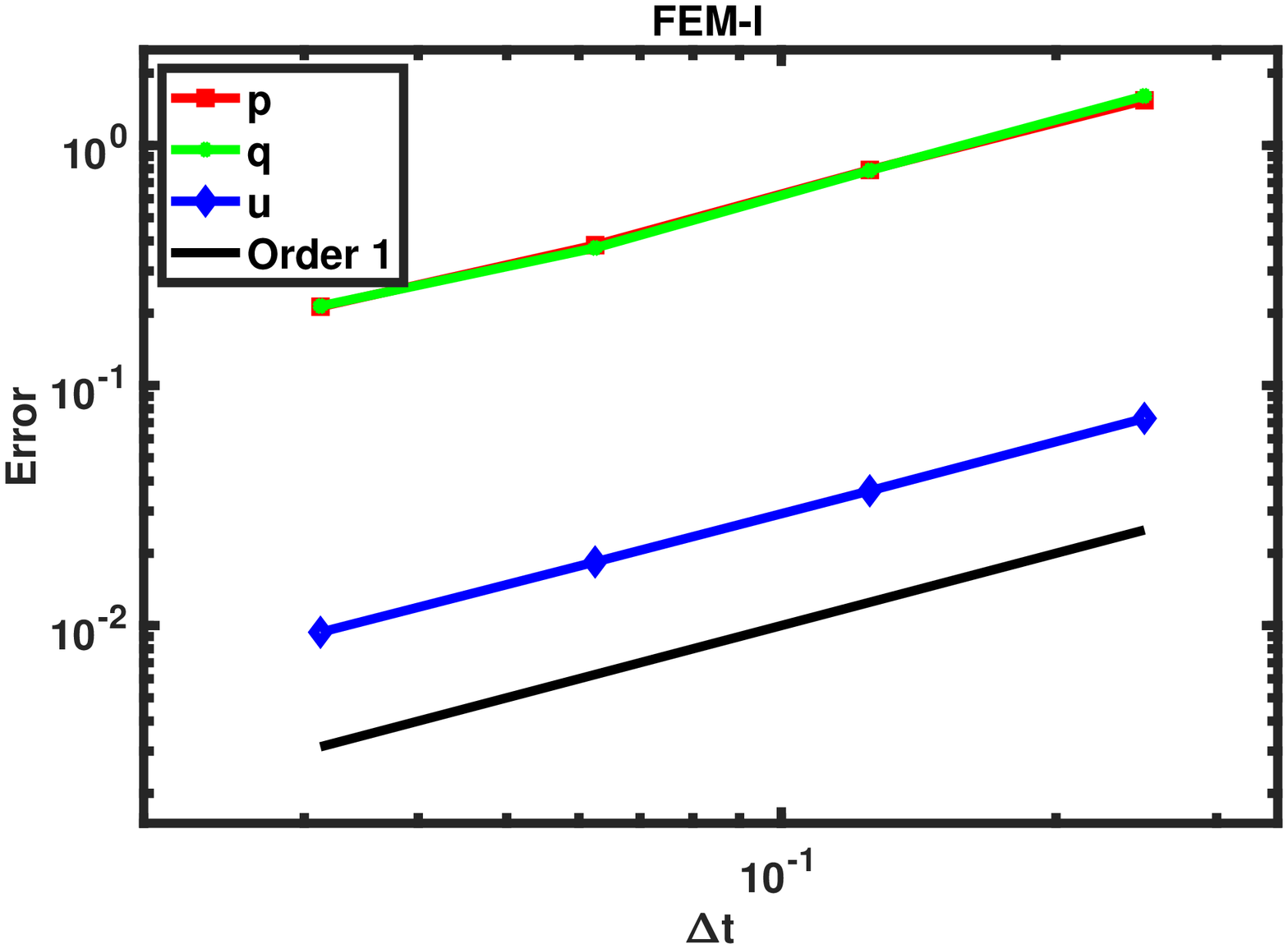}
		\end{minipage}
	}
	\caption{Mean-square errors of fully-discrete schemes in time with T=1.} 
	\label{fig1}
\end{figure}

\begin{figure}[h]
	\centering
	\subfigure{
		\begin{minipage}{15cm}
			\centering
			\includegraphics[height=4cm,width=4.5cm]{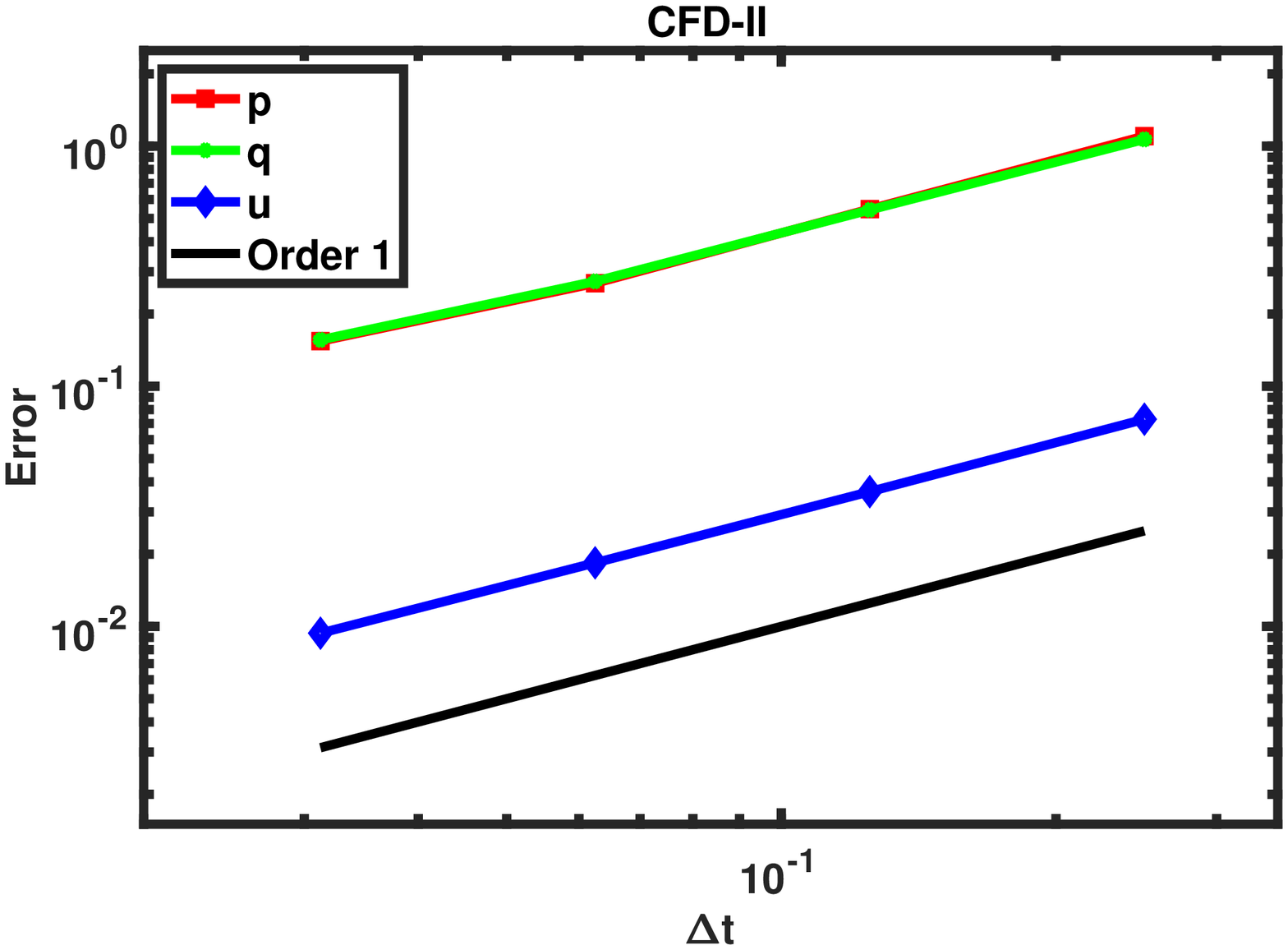}
			\includegraphics[height=4cm,width=4.5cm]{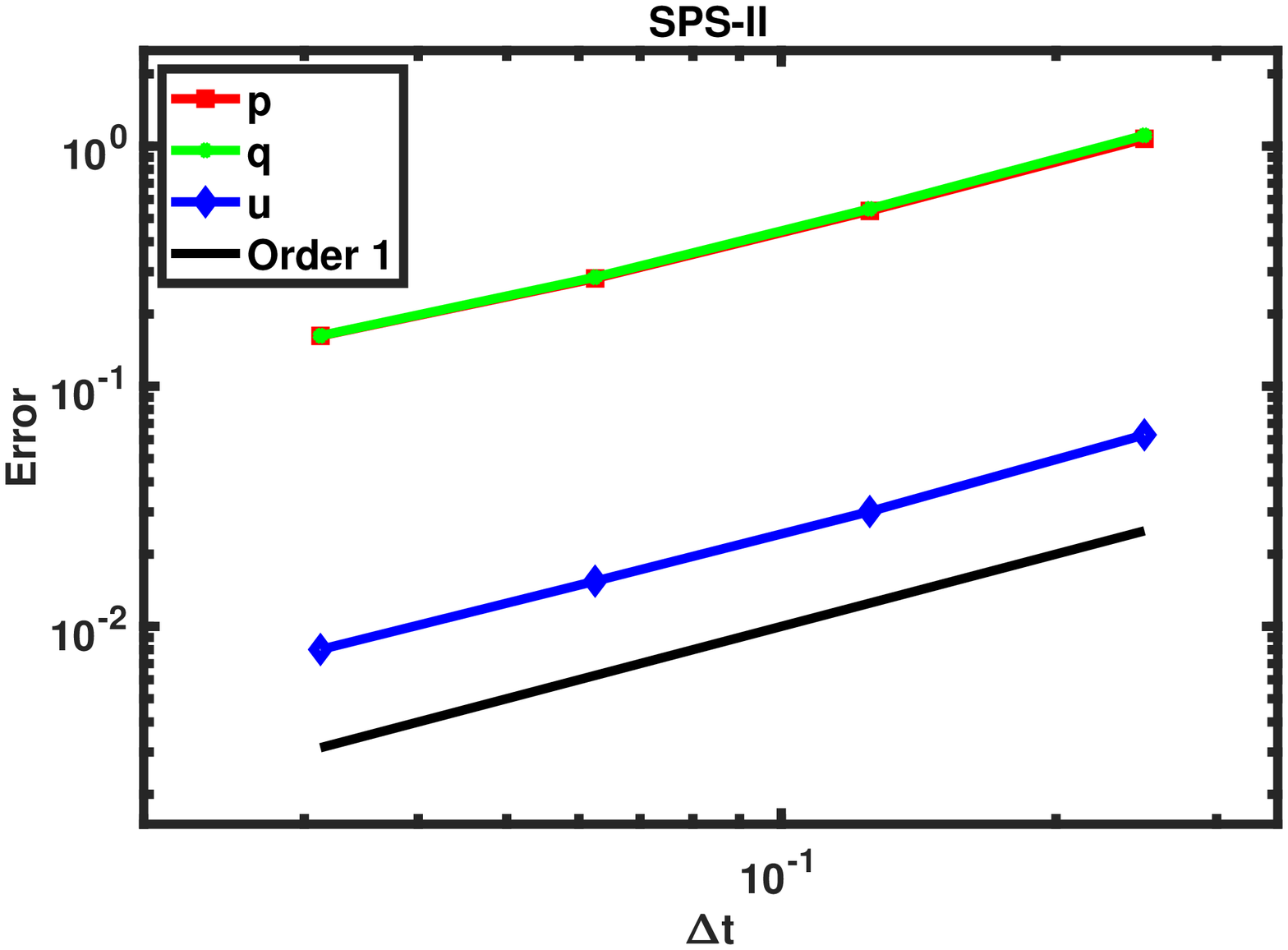}
			\includegraphics[height=4cm,width=4.5cm]{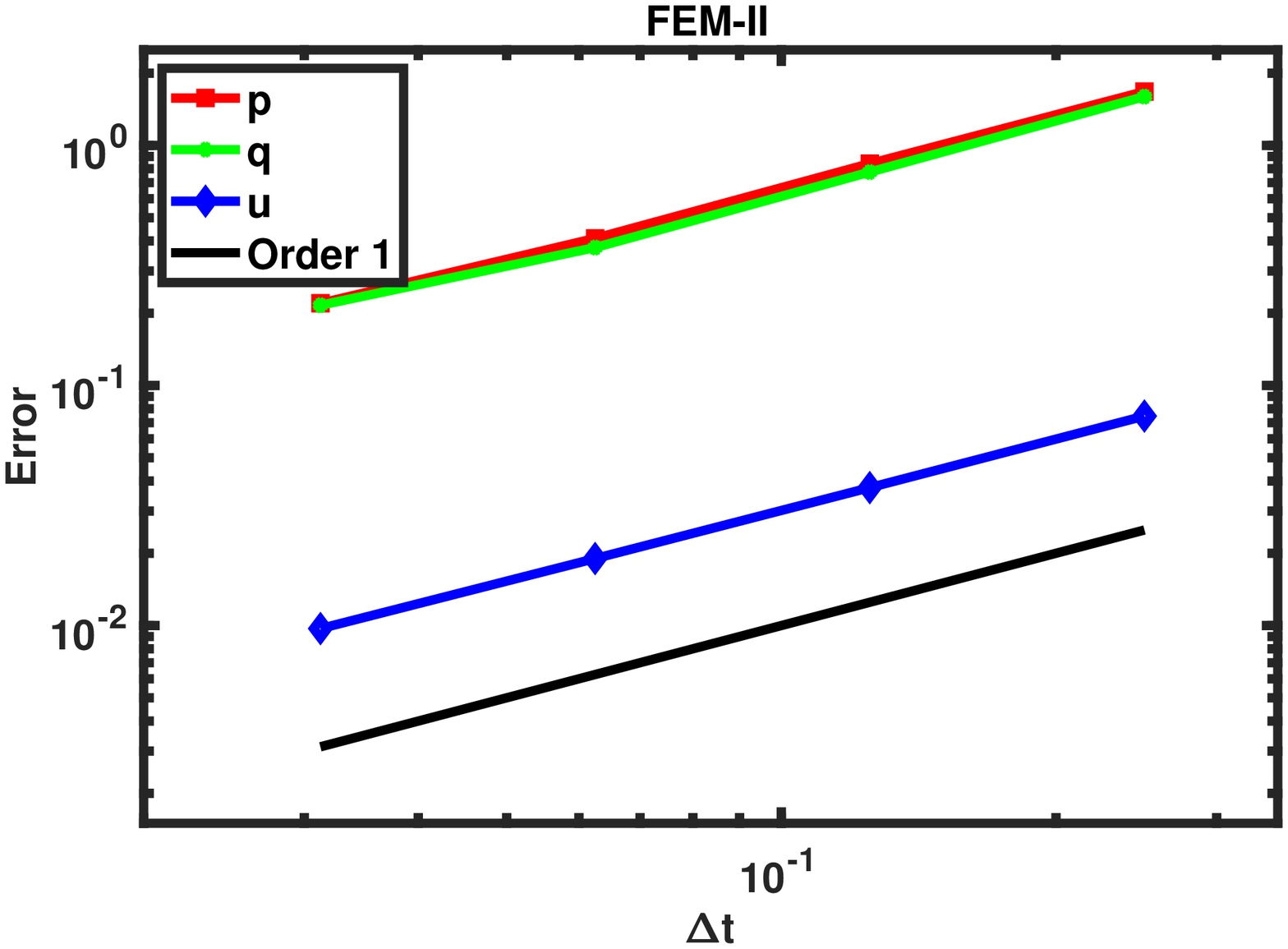}
		\end{minipage}
	}
	\caption{Mean-square errors of fully-discrete schemes in time with T=1.} 
	\label{fig2}
\end{figure}

For the sake of simplicity, we denote the fully-discrete scheme \eqref{SRK} as FD-SRK, which implies that the numerical scheme is based on the finite difference in space and symplectic parametric Runge--Kutta method with $\alpha = 0.001$ and $s=2$ in time. 
Similarly, for the case that central difference, sine pseudo-spectral method, or finite element method is employed in the spatial direction, and the modified finite difference method \eqref{CEFD} or midpoint method \eqref{MSP} is applied in the temporal direction, the corresponding numerical schemes are denoted by CFD-I, SPS-I, FEM-I and CFD-II, SPS-II, FEM-II, respectively. 
Here, we use piecewise linear polynomials for the finite element method. 
Figs. \ref{fig0}-\ref{fig2} show the mean-square error for seven fully-discrete schemes against $\Delta t=2^{-s}, s=3, 4, 5, 6$ on log-log scale at time $T = 1,$ when $a = -15, b=15$, and the initial conditions are
\begin{align*}
& \varphi( 0)=\frac{3 \sqrt{2}}{4 \sqrt{1-\theta^2}} \operatorname{sech}^2\left( \frac{x}{2 \sqrt{1-\theta^2}}\right) \exp \left(i\theta x\right), \\
& u( 0)=\frac{3}{4\left(1-\theta^2\right)} \operatorname{sech}^2\left( \frac{x}{2 \sqrt{1-\theta^2}}\right),\\
& u_t( 0)=\frac{3\theta}{4\left(1-\theta^2\right)^{3/2}} \operatorname{sech}^2\left( \frac{x}{2 \sqrt{1-\theta^2}}\right)\operatorname{tanh}\left( \frac{x}{2 \sqrt{1-\theta^2}}\right),
\end{align*}
with $\theta =0.3$. 
The exact solution is computed by implementing the proposed numerical schemes with a small time step size $\Delta t=2^{-8}$ and small space step size $h= 15\times 2^{-7}$. 
It can be observed that the slopes of seven fully-discrete schemes are close to 1 on the temporal convergence order. 
The theoretical result will be studied in future work.

\begin{figure}[h]
	\centering
	\subfigure{
		\begin{minipage}{15cm}
			\centering
			\includegraphics[height=4cm,width=4.5cm]{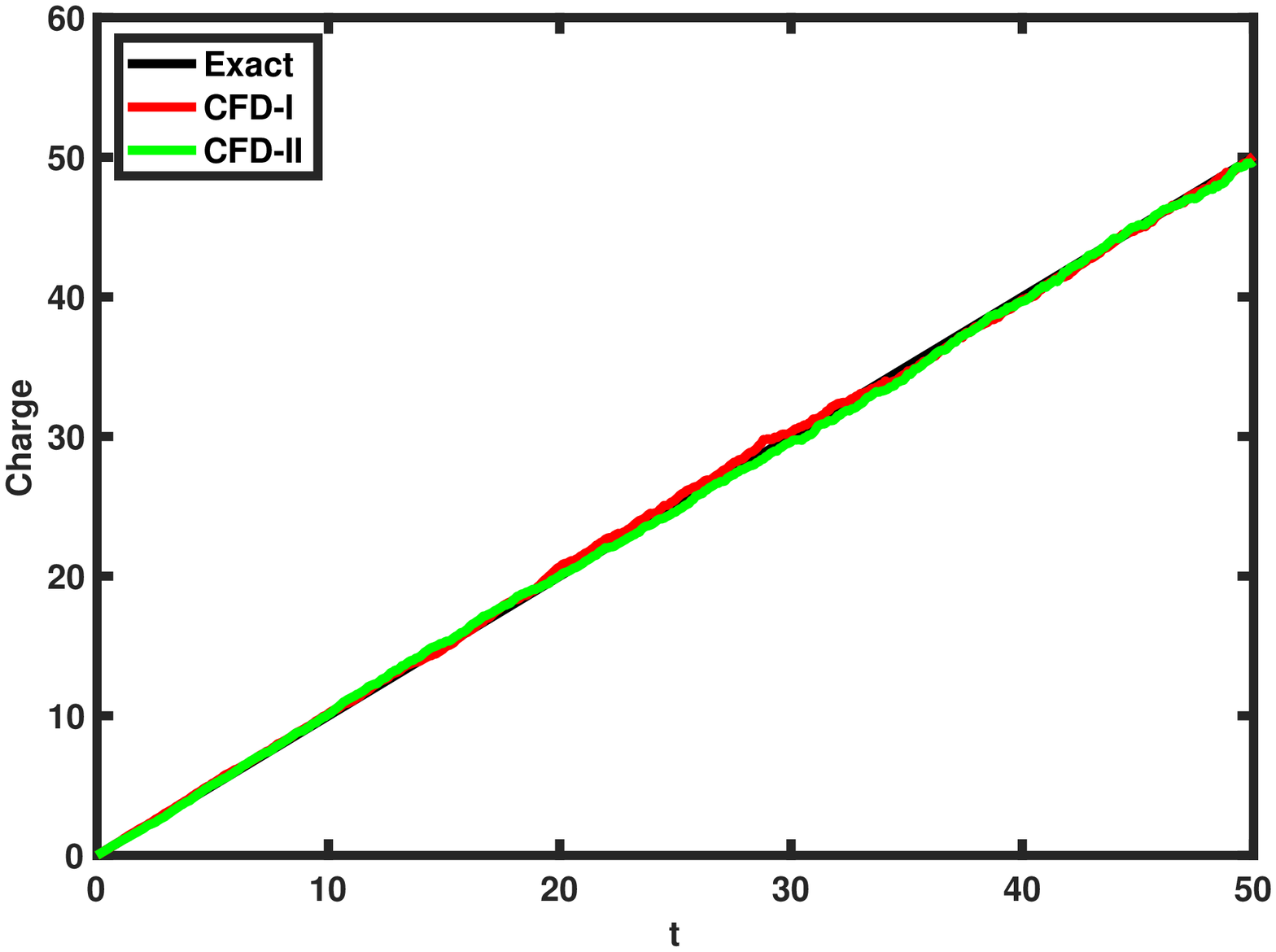}
			\includegraphics[height=4cm,width=4.5cm]{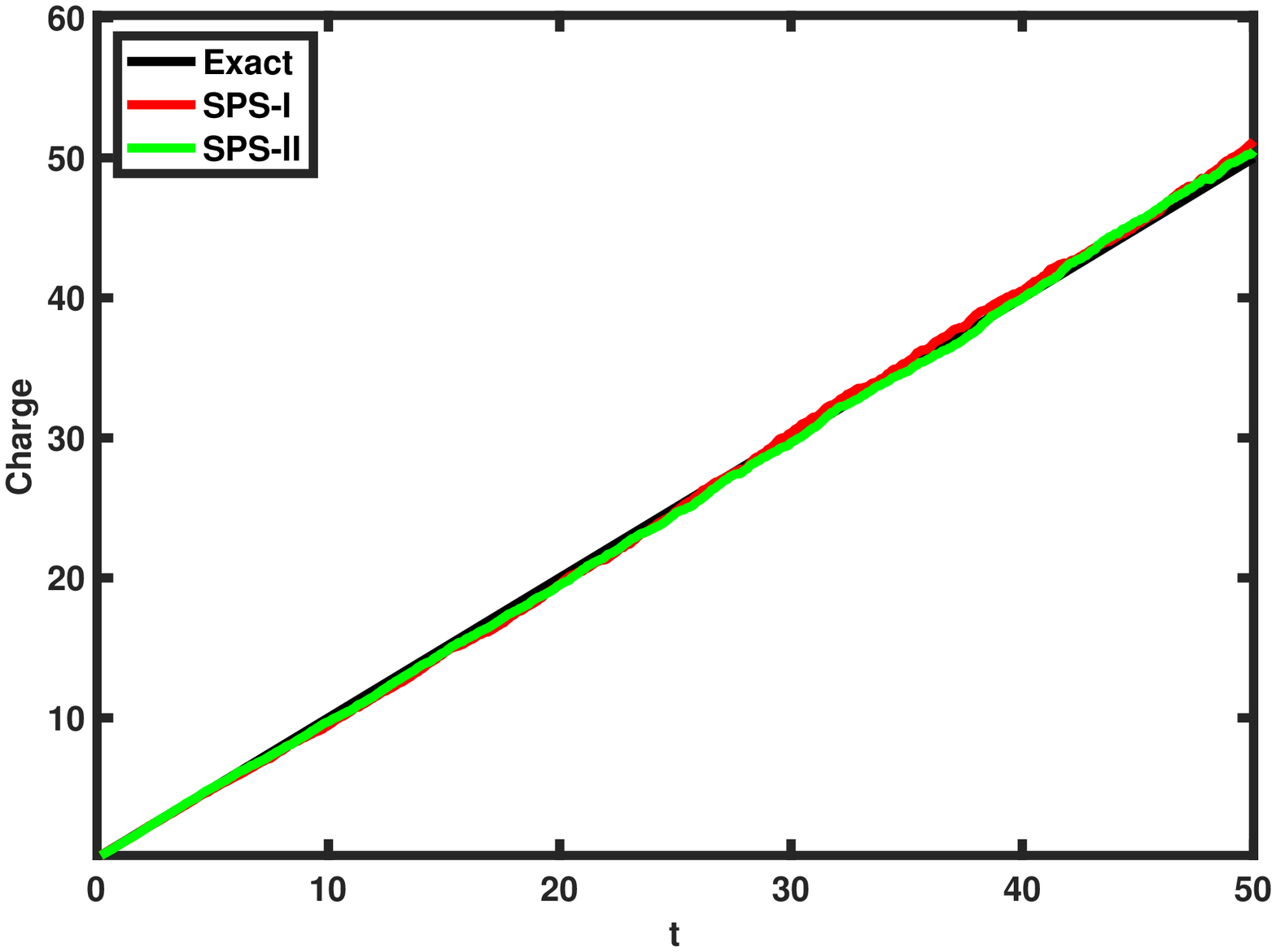}
			\includegraphics[height=4cm,width=4.5cm]{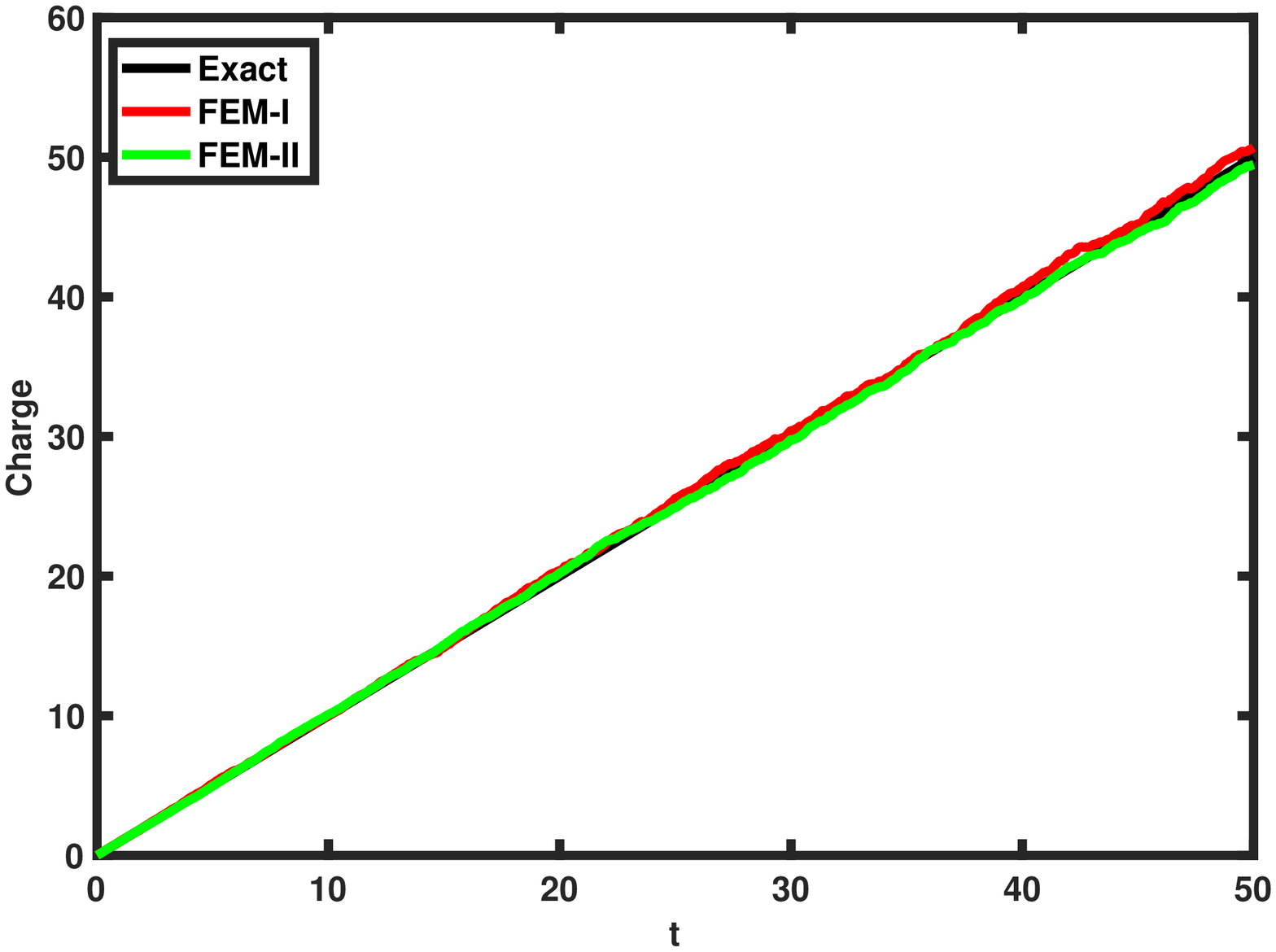}
		\end{minipage}
	}
	\caption{Averaged charge evolution relationship with $\Delta t = 25/2^8, h=1/2^4.$} 
	\label{fig3}
\end{figure}

\begin{figure}[h]
	\centering
	\subfigure{
		\begin{minipage}{15cm}
			\centering
			\includegraphics[height=4cm,width=4.5cm]{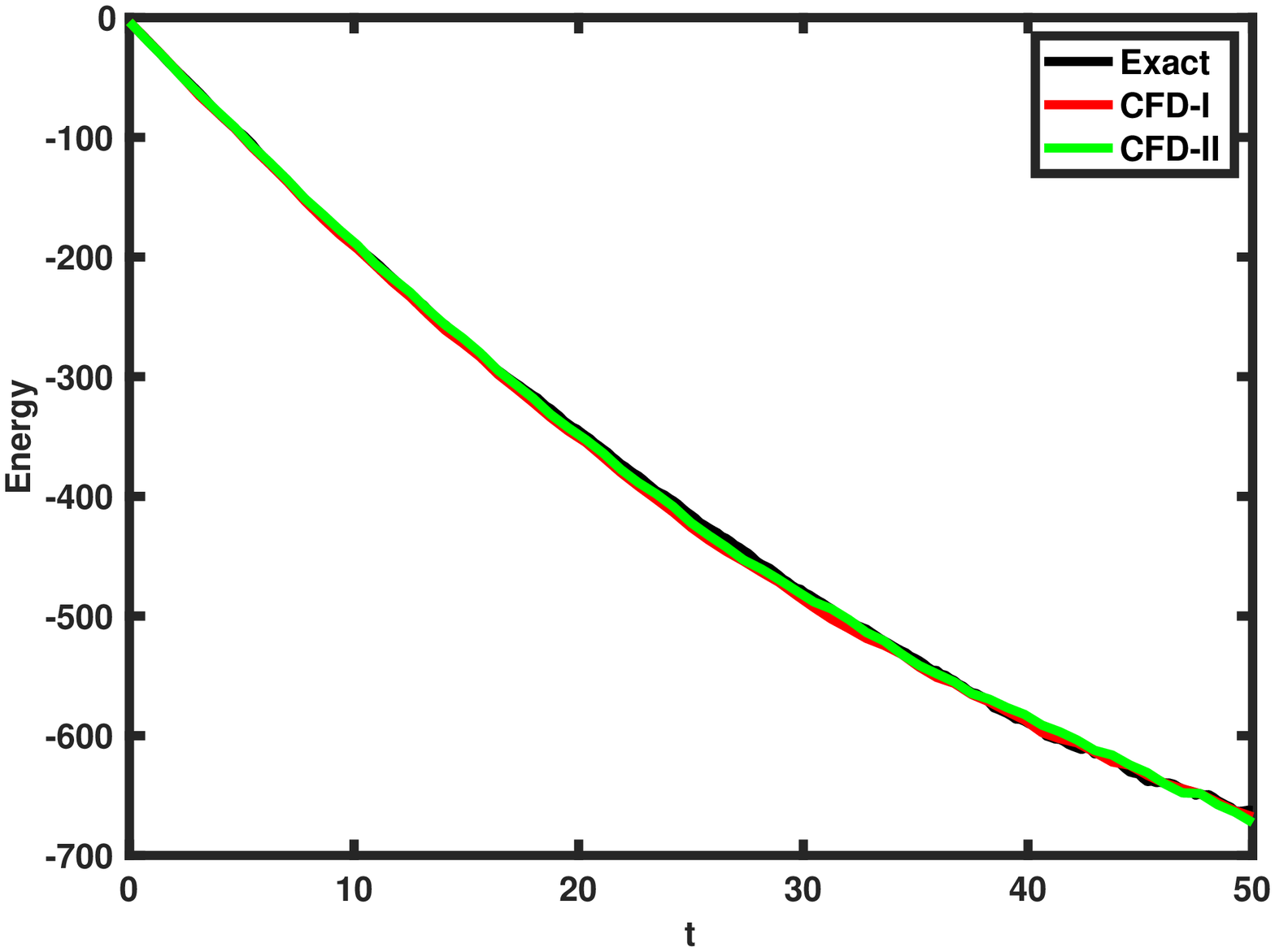}
			\includegraphics[height=4cm,width=4.5cm]{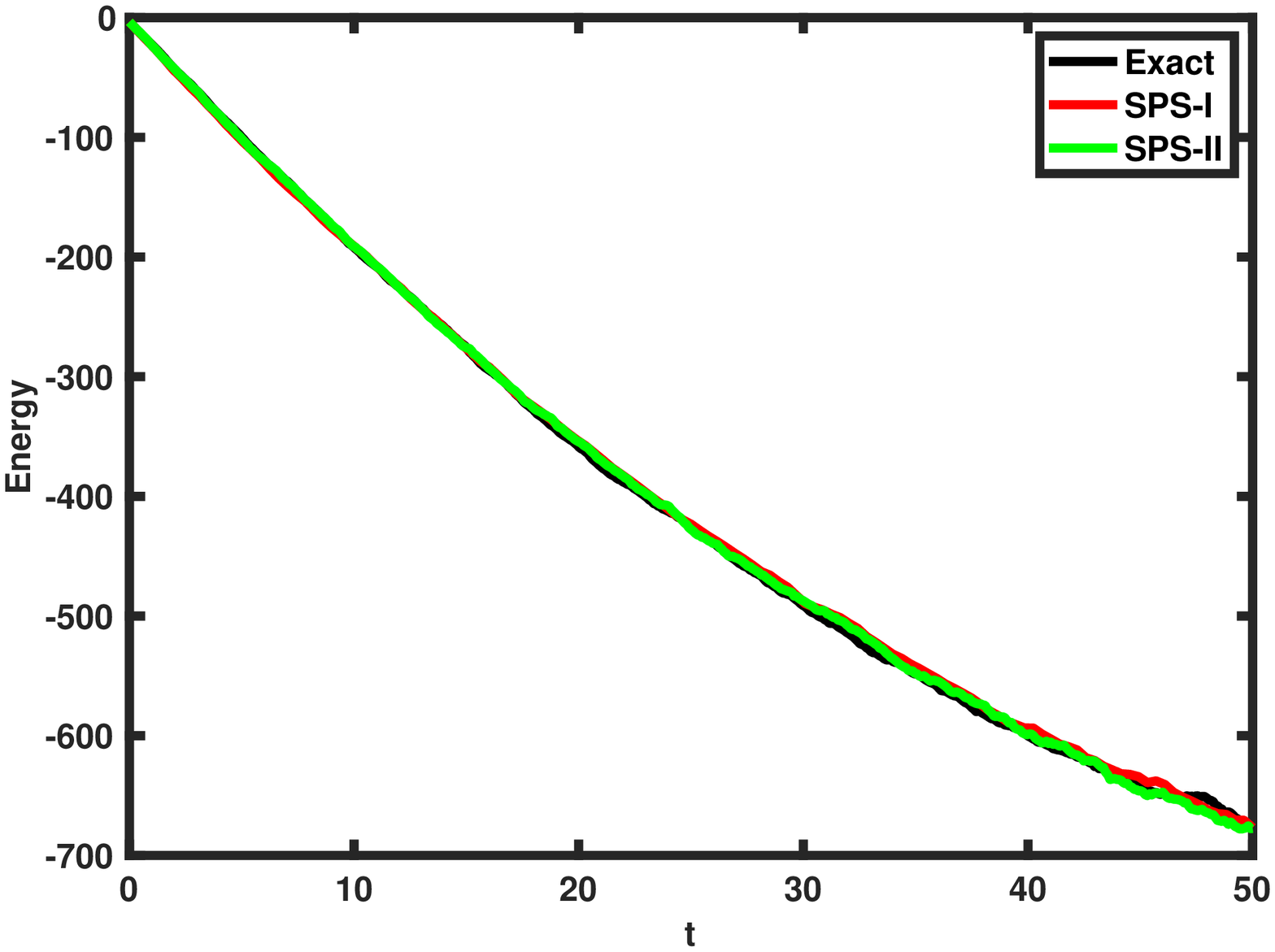}
			\includegraphics[height=4cm,width=4.5cm]{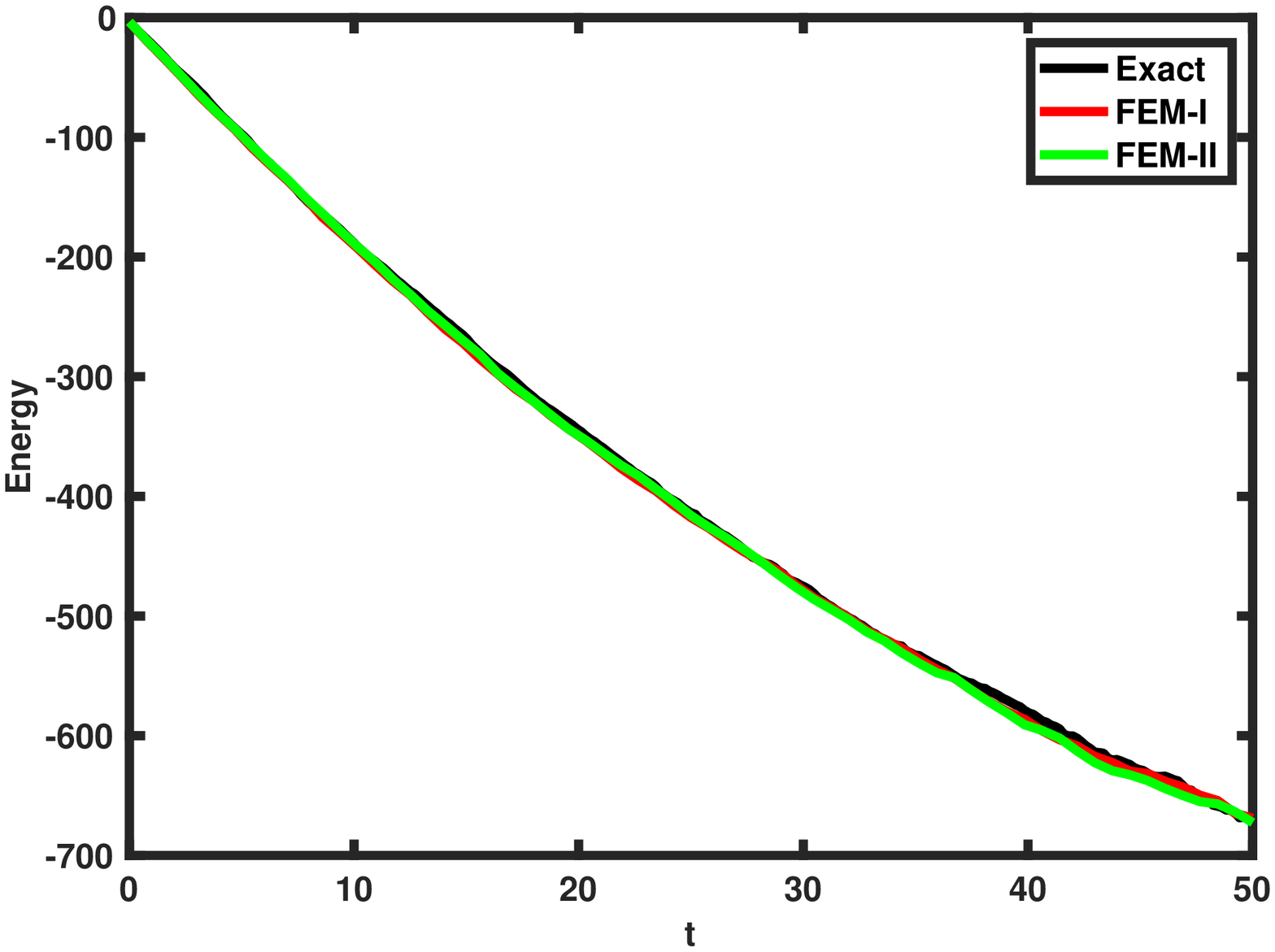}
		\end{minipage}
	}
	\caption{Averaged energy evolution relationship with $\Delta t = 25/2^{10}, h=1/2^3$.} 
	\label{fig4}
\end{figure}

When testing the long-time behaviors for proposed schemes, we choose $a=0,$ $b=1,$ $T=50$ and $\varphi( 0)= 0, u( 0) = 0, u_t( 0)=1$ as the initial conditions. 
The reference line (black line) in Figs. \ref{fig3}-\ref{fig4} stands for the averaged charge evolution law and averaged energy evolution law of the exact solution, respectively. 
It can be observed that the proposed schemes named CFD-I, SPS-I, FEM-I, CFD-II, SPS-II, and FEM-II reproduce the linear growth of the averaged charge and the evolution of the averaged energy. 
It implies that the proposed schemes preserve perfectly both the averaged charge and energy evolution law.

\section{Conclusion}
In this paper, novel structure-preserving schemes are proposed for solving stochastic KGS equations with additive noise. 
We prove that the fully-discrete scheme based on central difference, sine pseudo-spectral method, or finite element method in space and the finite difference method in time, preserves the averaged charge and energy evolution law. 
Besides, we propose a class of multi-symplectic methods through finite difference method in space and symplectic Runge--Kutta method in time. 
Compared with the classical Runge--Kutta method, the proposed multi-symplectic method is more flexible due to the flexibility of the parameter $\alpha$. 
In reality, there are still many important and challenging problems that remain to be solved, such as, studying the strong convergence analysis and estimating the strong convergence order for the proposed schemes; constructing ergodic fully-discrete scheme for damped stochastic KGS equations. We will investigate these problems in our near future.

\section*{Acknowledgements}
This work is supported by National Natural Science Foundation of China (No. 12101596, No. 12031020,  No.12171047).


\end{document}